%8/27/18
\documentclass[11pt]{amsart}

\usepackage{amssymb,amsmath}
\usepackage[all]{xy}
\usepackage{url}
  \usepackage{color}
\usepackage{graphicx}
 
\usepackage[
            pdftex,
           colorlinks=true,
            urlcolor=blue,       % \href{...}{...} external (URL)
            filecolor=blue,     % \href{...} local file
            linkcolor=blue,       % \ref{...} and \pageref{...}
            citecolor=blue,         % \cite{...}
            pdftitle= {Title},
            pdfauthor={Author},
            pdfsubject={Subject},
            pdfkeywords={Key}
            pagebackref,
            pdfpagemode=None,
            bookmarksopen=true]
            {hyperref}
\usepackage{hyperref}
\usepackage{color}
\raggedbottom

% definitions added by Daniel
\def\bfi{{\bf i}}
\def\Im{\mathop{\rm Im}\nolimits}

\def\gtilde{\tilde g}
\def\calTtilde{\tilde\calT}

\def\Gtilde{\tilde G}
\def\Htilde{\tilde H}
\def\htilde{\tilde h}
\def\Ftilde{\tilde F}
\def\Atilde{\tilde A}
\def\Dtilde{\tilde D}
\def\Etilde{\tilde E}
\def\gbar{\bar{g}}
\def\Gbar{\bar{G}}
\def\Abar{\bar{A}}

\def\sset{\subset}
\def\hur{{\mathcal{H}}}
\def\Z{\mathbb{Z}}
\def\Q{\mathbb{Q}}
\def\R{\mathbb{R}}
\def\F{\mathbb{F}}
\def\tensor{\otimes}
\def\semidirect{\rtimes}

\def\bfF{{\bf F}}
\let\iso\cong
\def\gend#1{\langle#1\rangle}

\def\orth{{\rm O}}

 \newtheorem{thm}{Theorem}[section]
 
 \newtheorem{lem}[thm]{Lemma}
 \newtheorem{corollary}[thm]{Corollary}

 \theoremstyle{definition}

 \newtheorem{remark}[thm]{Remark}

\numberwithin{equation}{section}

\newcommand{\bbA}{{\mathbb{A}}}
\newcommand{\bbC}{{\mathbb{C}}}

\newcommand{\bbH}{{\mathbb{H}}}

\newcommand{\bbR}{{\mathbb{R}}}
\newcommand{\bbP}{{\mathbb{P}}}

\newcommand{\bbQ}{{\mathbb{Q}}}

\newcommand{\bbZ}{{\mathbb{Z}}}

\newcommand{\GL}{\operatorname{GL}}

\newcommand{\SL}{\operatorname{SL}}

\newcommand{\PGL}{\operatorname{PGL}}

\newcommand{\Aut}{\operatorname{Aut}}

\newcommand{\Nef}{\operatorname{Nef}}

\newcommand{\Pic}{\operatorname{Pic}}

\newcommand{\diag}{\operatorname{diag}}

\newcommand{\cha}{\operatorname{char}}

\newcommand{\bsm}{\left(\begin{smallmatrix}}
\newcommand{\esm}{\end{smallmatrix}\right)}

\newcommand{\calC}{\mathcal{C}}
\newcommand{\calE}{\mathcal{E}}

\newcommand{\calO}{\mathcal{O}}

\newcommand{\calT}{\mathcal{T}}

\newcommand{\frakS}{\mathfrak{S}}
\newcommand{\frakA}{\mathfrak{A}}

\newcommand{\Num}{\operatorname{Num}}

\newcommand{\SO}{\operatorname{SO}}

\newcommand{\UC}{\operatorname{UC}}

\newcommand{\Bir}{\textup{Bir}}

\newcommand{\beq}{\begin{equation}}
\newcommand{\eeq}{\end{equation}}
%\providecommand{\mat}[1]
%{\ensuremath{\bigl( \begin{smallmatrix} #1 \end{smallmatrix} \bigr)}}

\begin{document}

\title[The reflection group]{The  tetrahedron and  
automorphisms of Enriques and Coble surfaces of Hessian type}

\author{Daniel Allcock}
\email{allcock@math.utexas.edu}
\address{Department of Mathematics, University of Texas at Austin}
\urladdr{www.math.utexas.edu/$\sim$allcock}
\thanks{First author supported by Simons Foundation Collaboration Grant 429818.}

\author{Igor Dolgachev}
\email{idolga@umich.edu}
\address{Department of Mathematics, University of Michigan.}
\urladdr{http://www.math.lsa.umich.edu/$\sim$idolga}

\begin{abstract} 
  Consider a cubic surface satisfying the mild condition that it may be described in
  Sylvester's pentahedral form.  There is a well-known  Enriques or Coble surface $S$  
  with K3 cover  birationally isomorphic to the Hessian surface of this cubic surface.   
  We describe the nef cone and the $(-2)$-curves of~$S$.
  In the case of pentahedral parameters $(1,1,1,\discretionary{}{}{}1,\discretionary{}{}{}t\neq0)$ 
  we compute the
  automorphism group of~$S$. 
 For $t\neq1$ it is the semidirect product of
  the free product $(\Z/2)^{\ast 4}$ and the symmetric group $\frakS_4$. In the special case $t=\frac{1}{16}$ we study the action of
  $\Aut(S)$ on an invariant smooth rational curve $C$ on the Coble surface $S$.  We
  describe the action and its image, both geometrically and  arithmetically.
  In particular, we prove that $\Aut(S)\to\Aut(C)$ is injective in characteristic~$0$ and 
  we identify its image with the subgroup of $\PGL_2$
  coming from the 
  symmetries of a regular tetrahedron and the reflections across its facets.
\end{abstract}

\maketitle

\section{Introduction} 

%We work over a ground field $\Bbbk$ of characteristic${ }\neq2,3$.
Let $\UC(4)$ be the universal Coxeter group on 4 generators, i.e. a free product of four groups of order $2$. The permutation group $\frakS_4$ acts naturally on it.  Let $G = \UC(4)\rtimes \frakS_4$ be the semi-direct product. In this article we realize the group $G$ in several ways:

\begin{enumerate}
\item $G$ is the group of automorphisms of every Enriques and Coble surface 
  in a certain $1$-parameter family.
\item $G$ acts on an invariant smooth rational curve on a particular rational
  surface in this family,  faithfully when in characteristic~$0$.%$\cha\Bbbk=0$.
\item $G$ is a discrete group of motions of hyperbolic space $\bbH^9$.
\item $G$ is the (nondiscrete) group of isometries of $3$-dimensional Euclidean space
   generated by 
  the symmetries of a regular tetrahedron  and the reflections across its facets. 
\item $G$ is   the group of $\bbZ[\frac13]$-valued points of a 
  %simple 
  algebraic group scheme over $\bbZ$ coming from automorphisms of 
  Hamilton's 
  quaternion algebra.
\item $G$ is maximal among
  discrete subgroups with finite covolume in $\PGL_2({\bbQ}_3)$, where ${\bbQ}_3$ is
  the field of $3$-adic rationals. 
\end{enumerate}

The algebraic geometrical motivation for the study of this remarkable group is a question posed by Arthur Coble in the 1940s \cite{Coble}: given a group 
of birational automorphisms of an algebraic surface $S$ that leaves a rational curve $C$ on it invariant, what are the image and kernel of the restriction map to $\Bir(C)\cong \PGL_2$? Interest in this problem was recently resurrected in works  of Dinh and Oguiso \cite{Oguiso} and  Lesieutre \cite{Lesieutre}. In our situation, $S$ is the Coble surface obtained by blowing up all ten double points of a certain plane rational curve of degree 6 that admits $\frakS_4$ as its group of projective symmetries (see \cite[Sec.~5.4]{DolgachevSalem}). 
The curve $C\sset S$ is the proper inverse transform  of this sextic. 
Another description of $S$ begins with the Hessian surface $H$, of a
cubic surface having six Eckardt points and one ordinary node.  There is 
a birational involution of $\bbP^3$ that acts freely away from the node, and $S$ is the 
minimal resolution of the quotient of~$H$ by it. 
We will show that the group of automorphisms of $S$ is isomorphic to~$G$, via a homomorphism 
that identifies natural generators of $\Aut(S)$ with natural generators of $\UC(4)\semidirect\frakS_4$. This allows us to deduce that the restriction homomorphism $G\to\PGL_2(\Bbbk)$ is faithful when
working in characteristic~$0$.
The cubic surface that gives rise to this Coble surface $S$ is the $t=\frac1{16}$ 
member of the following one-parameter family of 
cubic surfaces
\begin{equation*}
y_0+\cdots+y_4=y_0^3+\cdots+y_3^3+ty_4^3=0
\end{equation*}
where $t\neq0$.
This family is also projectively isomorphic to the pencil of cubic surfaces with $\frakS_4$-symmetry (type V in Table 9.5.9 from \cite{CAG}\footnote{There is a misprint in the formula: the term $at_1t_2t_3$ must be added.}).
The minimal resolution $S_t$ of the quotient 
of the  Hessian surface  is a Coble surface if $t\in\{\frac{1}{4}, \frac{1}{16}\}$, or if
$t=1$ and $\cha\Bbbk=5$.
Otherwise it is an Enriques surface. 
%with automorphism group isomorphic to $\frakS_4$. 
We compute the group of automorphism of $S_t$ 
and obtain the amazing fact that it does not depend on the parameter $t\ne 1$ and is isomorphic 
to the group $G$. The exceptional case $t = 1$ corresponds to the Clebsch diagonal cubic surface.
In this case, both $S_t$ and the cubic surface have automorphism group $\frakS_5$,
and $S_t$ has type VI in Kondo's classification of complex Enriques surfaces with 
finite automorphism group \cite{Kondo}. 

Our strategy for working out $\Aut(S_t)$ is to use known automorphisms coming from the projections of the Hessian surface from its 10 nodes to build
a concrete model for the real nef cone $\Nef_\bbR(S_t)$. Then we use the shape of this
cone to show that these known automorphisms generate $\Aut(S_t)$.  
The nef cone is (the cone in $\bbR^{10}$ over) a polytope in hyperbolic $9$-space,
which usually has infinitely many facets.  (In the Coble case, $\Nef_\bbR(S_t)$ has dimension
  larger than~$10$.  But we identify a $10$-dimensional slice of it on which $\Aut(S_t)$
  acts faithfully.  By restricting attention to that slice, the Coble and
Enriques cases become uniform.) We are nevertheless able to give
a useful description of it in Theorem~\ref{Thmnefcone}, for surfaces arising from the Hessian surfaces of an arbitrary Sylvester non-degenerate cubic surface. It seems reasonable
to hope that this description will enable the computation
of $\Aut(S_t)$ in the same generality.  (For a  Sylvester nondegenerate nonsingular cubic surface with no Eckardt points this has been 
done by other methods by I. Shimada \cite{Shimada}.)

\medskip
\noindent
{\bf Acknowledgement.}
The authors thank Shigeru Mukai for stimulating discussions about the automorphism groups of Enriques surfaces arising from  Hessian surfaces during their stay at RIMS.

\section{Enriques and Coble surfaces of  Hessian type}
\label{SecBackground}

We work over an algebraically closed field $\Bbbk$ of characteristic $\ne 2,3$. Although some of the references in the paper refer to sources  which work over the field of complex numbers, all the proofs of what we need extend to our case. 
This section develops the situation of \cite{DolgachevSalem} in a manner which treats the Enriques and
Coble cases uniformly.  

Let $V_3 = \{F(x_0,x_1,x_2,x_3) = 0\}\subset \bbP^3$ be a cubic 
surface and $H\subset \bbP^3$ be its Hessian surface, given by the 
determinant of the matrix of second partial derivatives of $F$. 
We call $V_3$  \emph{Sylvester non-degenerate} if $F$ can be written 
as the sum of the cubes of five linear forms, any four of which are linearly independent.
A well-known theorem of Sylvester asserts that a 
general cubic surface is Sylvester non-degenerate and 
 that a Sylvester non-degenerate cubic surface uniquely determines the
linear forms, up to cube roots of unity and a common scaling factor (see \cite{CAG}, Theorem 9.4.1).
It is more convenient to work with certain multiples of the linear forms.
Namely, up to a common factor, 
they have unique nonzero multiples $L_0,\dots,L_4$ with the property
$\sum_{a=0}^4L_a=0$.  
We define $\lambda_0,\dots,\lambda_4$ by the relation $F=\sum_{a=0}^4\lambda_aL_a^3$.
The $\lambda_a$ are nonzero by Sylvester nondegeneracy.

Using the map $\bbP^3\to \bbP^4$ defined by
$$(x_0:x_1:x_2:x_3)\mapsto (L_0(x_0,x_1,x_2,x_3){:}\cdots{:} L_4(x_0,x_1,x_2,x_3))$$
we embed $V_3$ and $H$ into $\bbP^4$.  Their images are defined by
$$
\sum_{a=0}^4 y_a = \sum_{a=0}^4\lambda_ay_a^3 = 0
\qquad\hbox{and}\qquad
\sum_{a=0}^4 y_a = \sum_{a=0}^4\frac{1}{\lambda_a y_a} = 0
$$
respectively.
The last sum is shorthand for the quartic polynomial got by clearing denominators.
When speaking of $\bbP^3$ we will always mean the hyperplane $\sum_{a=0}^4y_a=0$.
The intersections of this hyperplane with the hyperplanes $y_a=0$ are called the
faces of the  \emph{Sylvester pentahedron}, and their pairwise and triple intersections
are called the edges and vertices of the pentahedron.
Each face contains four edges, which form the intersection of that face with the Hessian.
We write $L_{ab}$ for the edge $y_a=y_b=0$ and $P_{ab}$ for the vertex
defined by $y_c=0$ for all $c\neq a,b$.
$P_{ab}$ lies in $L_{cd}$ if and only if $\{a,b\}\cap\{c,d\}=\emptyset$.
The lines and vertices form  an abstract symmetric Desargues configuration $(10_3)$; see
Figure~\ref{sylvester}. 

\begin{figure}[ht]
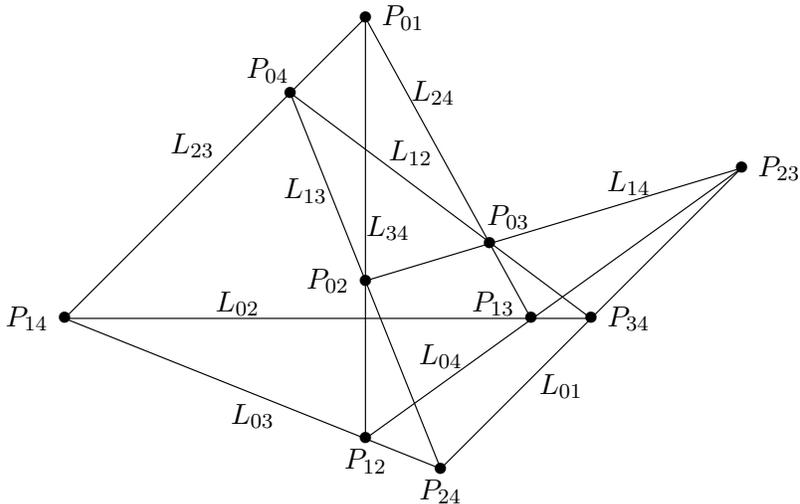

\xy (-20,-25)*{};
(0,0)*{};(40,40)*{}**\dir{-};(0,0)*{};(70,0)*{}**\dir{-};(0,0)*{};(50,-20)*{}**\dir{-};
(50,-20)*{};(90,20)*{}**\dir{-};(40,40)*{};(40,-16)*{}**\dir{-};(50,-20)*{};(30,30)*{}**\dir{-};
(40,40)*{};(62,0)*{}**\dir{-};(40,-16)*{};(90,20)*{}**\dir{-};(70,0)*{};(30,30)*{}**\dir{-};(90,20)*{};(40,5)*{}**\dir{-};
@={(0,0),(30,30),(40,40),(70,0),(90,20),(50,-20),(40,-16),(62,0),(40,5),(56.5,10)}@@{*{\bullet}};
(-5,0)*{P_{14}};(27,33)*{P_{04}};(45,40)*{P_{01}};(95,20)*{P_{23}};(75,0)*{P_{34}};(50,-23)*{P_{24}};
(40,-19)*{P_{12}};(35,5)*{P_{02}};(59,13.5)*{P_{03}};(57,2)*{P_{13}};
(17,23)*{L_{23}};(25,-13)*{L_{03}};(23,2)*{L_{02}};(66,-9)*{L_{01}};(43,12)*{L_{34}};(49,30)*{L_{24}};
(32,17)*{L_{13}};(75,18)*{L_{14}};(50,-5)*{L_{04}};(46,22)*{L_{12}};
\endxy
\caption{Sylvester pentahedron}\label{sylvester}
\end{figure}

One can check using partial derivatives that the vertices are ordinary nodes.
Every other point on an edge is smooth (since it is a smooth point of a planar section).
The Hessian may have additional singularities, but they are mild.  In light of
the following lemma, we call them the {\it new nodes}.  We will write $k$ for the number of
new nodes.

\begin{lem}
  \label{newsing}
  Let $V_3$ be a Sylvester non-degenerate cubic surface.
  The singular points of $V_3$ coincide with the singular points of
  $H$ away from the vertices of the pentahedron.  
  Each such point is an ordinary node of $V_3$ and an ordinary node of $H$.
\end{lem}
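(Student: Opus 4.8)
The plan is to work throughout in the pentahedral coordinates, with $V_3$ and $H$ cut out on the hyperplane $\{\sum_a y_a = 0\}$ by $\sum_a\lambda_a y_a^3 = 0$ and $\sum_a\prod_{b\neq a}\lambda_b y_b = 0$ respectively, and to detect singular points by the Lagrange criterion: a point of the hyperplane is singular on the surface iff the ambient gradient of the defining equation is proportional to $(1,\dots,1)$. For $V_3$ the ambient gradient is $(3\lambda_a y_a^2)_a$, so the singular points are exactly those with $\lambda_0 y_0^2 = \cdots = \lambda_4 y_4^2 = c$ for a common value $c$. Since the $\lambda_a$ are nonzero, $c=0$ would force every $y_a$ to vanish; hence $c\neq0$, every coordinate of such a point is nonzero, and $p$ lies off the whole pentahedron. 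The two identities $\sum_a\lambda_a y_a^3 = c\sum_a y_a$ and $\sum_a(\lambda_a y_a)^{-1} = c^{-1}\sum_a y_a$ then show at once that every such point lies on both $V_3$ and $H$.

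I would next pin down $\Sing(H)$. Differentiating $\sum_a\prod_{b\neq a}\lambda_b y_b$ and using the surface equation to discard the vanishing terms, one finds that at a point of $H$ with all coordinates nonzero the partial derivative in $y_c$ is proportional (with a $c$-independent factor) to $(\lambda_c y_c^2)^{-1}$; hence the Lagrange condition again reads $\lambda_a y_a^2 = \text{const}$, and $\Sing(H)\cap\{\text{all }y_a\neq0\} = \Sing(V_3)$. To complete the set-theoretic statement I would stratify the pentahedron by the number of vanishing coordinates and inspect the gradient on each stratum: on an open face (say $y_4=0$) the four partials $\partial_c G$ with $c\le3$ are proportional to $\prod_{b\le3,\,b\neq c}y_b$ and cannot all coincide without forcing $y_0=\cdots=y_3=0$; on an open edge the gradient has the shape $(0,0,0,\ast,\ast)$; and at a vertex the gradient vanishes identically. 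This confirms that the vertices are the only singular points of $H$ on the pentahedron and rules out stray singularities, yielding $\Sing(H)\setminus\{\text{vertices}\} = \Sing(V_3)$.

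It remains to check that each common point $p$ (where $\lambda_a y_a^2 = c$) is an ordinary node on each surface, and this is where the real computation lies. In both cases the tangent cone is a quadratic form on $W = \{\sum_a u_a = 0\}$, and I would verify the node condition by showing that the radical of this form inside $W$ is exactly the radial line $\langle p\rangle$; the form then descends to a nondegenerate (rank $3$) form on the projective tangent space $W/\langle p\rangle$. For $V_3$ the quadratic part is the diagonal form $\sum_a\lambda_a y_a(p)\,u_a^2$, and a direct calculation shows its $W$-radical is $\langle p\rangle$. For $H$ the Hessian matrix $M$ of the quartic at $p$ is the delicate object: I expect its diagonal entries to vanish and its off-diagonal entries to be proportional to $y_c^{-1}+y_d^{-1}$, so that $M = w\mathbf{1}^{T}+\mathbf{1} w^{T}-2\diag(w)$ with $w=(y_a^{-1})_a$; solving $Mu\in\langle\mathbf{1}\rangle$ for $u\in W$ then forces $u\in\langle p\rangle$ and identifies the radical. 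The main obstacle is precisely this Hessian computation for $H$---tracking which $\prod$-type monomials survive a second differentiation and simplifying everything on the locus $\lambda_a y_a^2 = c$; once $M$ is in hand, the node property for both surfaces is elementary linear algebra.
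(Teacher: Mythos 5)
Your proposal is correct, and its skeleton is the same as the paper's: the Lagrange condition yields the common characterization $\lambda_0y_0^2=\cdots=\lambda_4y_4^2=c\neq0$ of the singular points of both surfaces off the pentahedron, and the node property is certified by showing that the radical of the second-order form on $W=\{\sum_a u_a=0\}$ is exactly the radial line $\langle p\rangle$. You differ precisely at your flagged ``main obstacle'': the paper never computes the Hessian of the cleared quartic. Since the relevant points have all coordinates nonzero, it takes $\sum_a 1/(\lambda_a y_a)$ itself as the defining equation on that open set; its second derivative matrix is $\diag[2/\lambda_ay_a^3]$, hence a scalar multiple of $\diag[1/y_0,\dots,1/y_4]$ at a singular point, and since the Hessian of $F=\sum_a\lambda_ay_a^3$ is $\diag[6\lambda_ay_a]=\diag[6c/y_a]$ there, one diagonal computation settles both surfaces at once. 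Your predicted matrix $M=w\mathbf{1}^T+\mathbf{1}w^T-2\diag(w)$ with $w=(y_a^{-1})_a$ is in fact correct: on $H$ one finds $\partial_c\partial_d G=-\frac{P}{y_cy_d}\bigl(\frac{1}{\lambda_cy_c}+\frac{1}{\lambda_dy_d}\bigr)$ for $c\neq d$, with $P=\prod_b\lambda_by_b$, which reduces to your form when $\lambda_ay_a^2=c$; and it reconciles with the paper's computation, because the rank-one terms $w\mathbf{1}^T+\mathbf{1}w^T$ annihilate $W$, so $M|_W$ is proportional to $\diag(w)|_W$ and your concluding linear algebra is literally the paper's. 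Two small points: your stratification of the pentahedron reproves what the paper established just before the lemma (each face meets $H$ exactly in its four edges, and a non-vertex point of an edge is smooth because it is a smooth point of a planar section), so it is sound but optional; and in the open-edge case you should state explicitly that the two starred gradient entries are nonzero (for the edge $y_c=y_d=0$ one has $\partial_cG=\lambda_c\prod_{b\neq c,d}\lambda_by_b\neq0$), since the zero vector is also proportional to $(1,\dots,1)$.
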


\begin{proof}
  First we find the singular points of $H$ away from the vertices of the pentahedron.
  Such a point $(y_0:\cdots:y_4)$ has all coordinates nonzero, so we may 
  take $\sum_{a=0}^4y_a=\sum_{a=0}^41/\lambda_ay_a=0$ as the defining equations.
  The method of Lagrange multipliers shows that the singular points of $H$ are
  given by the additional conditions
  $\lambda_ay_a^2=\lambda_by_b^2$ for all $a,b$.
  Regarding $(y_0,\dots,y_4)$ as a point of $\bbA^5$ lying over one of these singularities,
  the second derivative matrix  is a scalar multiple of the
  diagonal matrix $\diag[1/y_0,\dots,1/y_4]$.  
  We regard this as a bilinear form on the tangent space of $\bbA^5$ at this point.  To show that the singularity is an ordinary node, 
  it is enough to show that this form's restriction to the 
 coordinate-sum zero subspace has null space no larger than the radial direction in~$\bbA^5$.
 This is immediate: if $(c_0,\dots,c_4)$ lies in the null space, then orthogonality to the vectors like
 $(1,-1,0,0,0)$ forces $c_a/y_a=c_b/y_b$ for all $a,b$.

 The corresponding analysis for $V_3$ turns out to be exactly the same calculation. The only
 difference is  that
 $V_3$ has no singularities in the faces of the pentahedron.
 (In fact every singular point of any cubic surface is also a singular point of its Hessian.
  This follows easily from the alternate definition 
of $H$ as the discriminant surface of the web of polar quadrics of $V_3$; 
see \cite{CAG}, Proposition 1.1.17.) 
\end{proof}

Because the Hessian 
$H$ 
is a quartic surface whose singularities are ordinary nodes,
 its minimal resolution $X$  is a K3 surface. 
We write $E_{ab}$ for the 
exceptional curve over $P_{ab}$. Because $P_{ab}$ is an ordinary node, $E_{ab}$ is 
a $(-2)$-curve, meaning a smooth rational curve with self-intersection~$-2$.
We denote the proper transforms of the $L_{ab}$ by the same notation $L_{ab}$.
The ten curves $E_{ab}$ (resp.\ $L_{ab}$) are disjoint.
Also,
\begin{equation*}
  L_{ab}\cdot E_{cd} =
  \begin{cases}
    1&\hbox{if $\{a,b\}\cap \{c,d\} = \emptyset$,}\\
    0&\hbox{otherwise}
  \end{cases}
\end{equation*}

The birational involution of $\bbP^4$ defined by the formula
$$\sigma:(y_0:\cdots:y_4)\mapsto \Bigl(\frac{1}{\lambda_0y_0}:\cdots:\frac{1}{\lambda_4y_4}\Bigr)$$
restricts to a birational self-map of
$H$. 
This restriction 
is biregular on the complement of the faces 
of the pentahedron.  A calculation shows that the
fixed
points of $\sigma$ in this open set coincide with the new nodes.
We also write $\sigma$ for the corresponding self-map of $X$, which is biregular.
 (Every birational map from one K3 surface to another is biregular).
One can check that $\sigma$ swaps each $E_{ab}$ with the disjoint curve $L_{ab}$.
It follows that $\sigma$ acts freely on $X$
away from the exceptional divisors over the new nodes.  

The exceptional divisor over a new node is a $(-2)$-curve, got by blowing up once. 
Also, each new node
is an isolated fixed point under the action of $\sigma$ on $\bbP^4$, so $\sigma$ acts by negation
on the tangent space there.
It follows that  $\sigma$ acts trivially on the fibers over the new nodes.
Therefore the quotient surface $S=X/\langle\sigma\rangle$ is smooth. 
$S$ is the main object of interest in this paper.
We write $f:X\to S$ for the quotient map, 
$\calE$ for the sum of the exceptional
divisors over the new nodes, and 
$\calC$ for the branch divisor in $S$ (the image of $\calE$).
This branch divisor 
consists of  $k$ disjoint smooth rational curves with self-intersection~$-4$.
Also, 
 we write $U_{ab}$ for the common image  in $S$ of $E_{ab}$ and $L_{ab}$.
The $U_{ab}$ are $(-2)$-curves which intersect  according to the 
Petersen graph, whose  symmetry group is the symmetric group~$\frakS_5$. See Figure~\ref{petersen}.

\begin{figure}[ht]
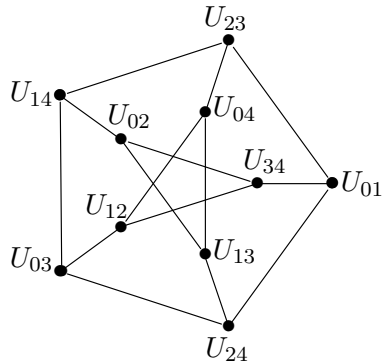
\label{peterseng}
\centering
\xy (-65,0)*{};(20,0)*{\bullet};(6.2,19)*{\bullet}**\dir{-};(-16.2, 11.7)*{\bullet}**\dir{-};(-16.2, 11.7)*{\bullet};(-16,-11.7)*{\bullet}**\dir{-};
(-16.2,-11.7)*{\bullet};(6.2,-19)*{\bullet}**\dir{-};(6.2,-19)*{};(20,0)*{\bullet}**\dir{-};
(10,0)*{\bullet};(-8.1,5.85)*{\bullet}**\dir{-};(-8.1,5.85)*{};(3.1, -9.5)*{\bullet}**\dir{-};(3.1, -9.5)*{};(3.1,9.5)*{\bullet}**\dir{-};
(3.1,9.5)*{};(-8.1,-5.85)*{\bullet}**\dir{-};
(-8.5,-5.85)*{};(10,0)*{}**\dir{-};   (10,0)*{};(20,0)*{}**\dir{-};  (3.1,9.5)*{};(6.2,19)*{}**\dir{-};
 (-16.2, 11.7)*{};(-8.1,5.85)*{}**\dir{-};(-16.2, -11.7)*{};(-8.2,-5.85)*{}**\dir{-};(6.2, -19)*{};(3.1,-9.85)*{}**\dir{-};
 (24,0)*{U_{01}};(5.8,22)*{U_{23}};(-20,12)*{U_{14}};(6,-22)*{U_{24}};(11,3)*{U_{34}};(7,10)*{U_{04}};(7,-9)*{U_{13}};(-7,9)*{U_{02}};
 (-10,-3)*{U_{12}};(-20,-10)*{U_{03}};
\endxy
\label{petersen}
\caption{The Petersen graph}\
\end{figure}

 We recall that a {\it Coble surface} means a smooth rational surface whose
anticanonical system is empty but whose bi-anticanonical system is not (see \cite{DZ}). 
They are important because they occur as degenerations of Enriques surfaces.

 \begin{lem}
   If there are no new nodes then $S$ is an Enriques surface.
   Otherwise, $S$ is a Coble surface, and 
   $\calC$ is the unique effective bi-anticanonical divisor on~$S$.
 \end{lem}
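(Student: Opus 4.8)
The plan is to handle the two cases separately, exploiting throughout that $X$ is a K3 surface with $\omega_X=\calO_X$ and that $f\colon X\to S$ is the quotient by $\sigma$.

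When there are no new nodes, $\sigma$ acts freely on $X$, so $f$ is an étale double cover of a smooth surface by a K3 surface, and I would invoke (or reprove) the standard fact that such a quotient is an Enriques surface. Concretely, $f_*\calO_X=\calO_S\oplus\calL^{-1}$ for the $2$-torsion line bundle $\calL$ defining the cover, so $H^i(X,\calO_X)=H^i(S,\calO_S)\oplus H^i(S,\calL^{-1})$; reading off $i=0,1$ gives $h^1(\calO_S)=0$ and $\chi(\calO_S)=1$. Since $f$ is étale, $f^*K_S=K_X=0$, while $K_S\neq 0$ (otherwise $S$ would be a second K3 surface, forcing $\chi(\calO_S)=2$), so $K_S$ is a nontrivial $2$-torsion class. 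A smooth projective surface with $2K_S\sim 0\neq K_S$ and $h^1(\calO_S)=0$ is an Enriques surface.

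In the presence of new nodes, $f$ is a double cover branched along the smooth divisor $\calC$, with ramification divisor $\calE$, and the heart of the matter is to pin down the canonical class. Writing $\calL$ for the line bundle with $\calL^{\otimes2}=\calO_S(\calC)$, the double-cover formulas give $f_*\calO_X=\calO_S\oplus\calL^{-1}$ and $\omega_X=f^*(\omega_S\otimes\calL)$. As $\omega_X=\calO_X$, I would first check that $f^*$ is injective on $\Pic$: from $f_*f^*M=M\oplus(M\otimes\calL^{-1})$ together with Krull--Schmidt uniqueness, $f^*M=\calO_X$ forces $M=\calO_S$ unless $\calL$ is $2$-torsion, which it is not since $\calC^2=-4k\neq0$. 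Hence $\omega_S\otimes\calL=\calO_S$, i.e. $\calL=\calO_S(-K_S)$, and therefore $\calC\sim-2K_S$. The same splitting $f_*\calO_X=\calO_S\oplus\omega_S$ yields $h^1(\calO_S)=0$, $h^0(K_S)=0$, $\chi(\calO_S)=1$; rationality then follows from Castelnuovo's criterion once I note $P_2=h^0(2K_S)=h^0(-\calC)=0$, as $\calC$ is effective and nonzero.

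It remains to control the (anti-)canonical systems, where the key input is the rigidity of disjoint $(-2)$-curves. From $K_X=f^*K_S+\calE$ and $K_X=0$ I obtain $f^*\calO_S(-K_S)=\calO_X(\calE)$ and $f^*\calO_S(-2K_S)=\calO_X(2\calE)$; pushing forward and using $h^0(-K_S)=0$ for the second, the projection formula reduces the claims to $h^0(X,\calE)=1$ and $h^0(X,2\calE)=1$. Both follow from the elementary lemma that for disjoint $(-2)$-curves $E_1,\dots,E_k$ and integers $m_i\geq1$ the only effective divisor linearly equivalent to $\sum m_iE_i$ is $\sum m_iE_i$ itself: any effective $D\sim\sum m_iE_i$ satisfies $D\cdot E_i=-2m_i<0$, forcing $E_i$ into $D$ with multiplicity at least $m_i$, so that $D-\sum m_iE_i$ is effective and linearly trivial, hence zero. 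This gives $h^0(-K_S)=0$ (so $|-K_S|=\emptyset$) and $h^0(-2K_S)=1$ (so $|-2K_S|=\{\calC\}$), exhibiting $S$ as a Coble surface with unique effective bi-anticanonical divisor $\calC$.

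I expect the main obstacle to be the exact determination of $K_S$ in the Coble case: the double-cover formula computes $\omega_S\otimes\calL$ only after pullback, so one must separately verify injectivity of $f^*$ — which rests on $\calL$ being non-torsion, i.e. on $\calC^2\neq0$ — in order to conclude $\calC\sim-2K_S$ exactly rather than up to a $2$-torsion ambiguity. The rigidity of the disjoint $(-2)$-curves is then what upgrades ``nonempty'' to ``unique'' for the bi-anticanonical system.
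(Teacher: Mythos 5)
Your proof is correct, and its skeleton matches the paper's: the étale case is disposed of by the standard characterization of Enriques surfaces, the branched case extracts $\calC\sim-2K_S$ from the ramification formula $K_X=f^*K_S+\calE$ together with $2K_X=0$, uniqueness comes from rigidity of disjoint negative curves, and rationality from Castelnuovo's criterion with $q=P_2=0$ (the paper offers this and the classification of K3 quotients as alternatives). The one place you genuinely add something is the step the paper passes over silently: from $f^*(2K_S+\calC)=0$ the paper concludes $\calC\sim-2K_S$ directly, whereas in principle the kernel of $f^*\colon\Pic(S)\to\Pic(X)$ could contain a $2$-torsion class; your Krull--Schmidt argument, showing the kernel is nontrivial only if $\calL$ is $2$-torsion and ruling that out via $\calC^2=-4k\neq0$, closes this gap cleanly, so your version of the linear equivalence is airtight where the paper's is merely plausible at a glance. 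The remaining differences are cosmetic: you run the rigidity argument upstairs on $X$ with the disjoint $(-2)$-curves $E_i$ and descend via the projection formula and the splitting $f_*\calO_X=\calO_S\oplus\omega_S$, while the paper argues directly on $S$ with the disjoint $(-4)$-curves $C_i$ and gets $|-K_S|=\emptyset$ by observing that the only candidate would be the non-integral divisor $\frac12\calC$; your $h^0$ bookkeeping ($h^0(X,\calE)=1$ forcing $h^0(-K_S)=0$, $h^0(X,2\calE)=1$ forcing $h^0(-2K_S)=1$) is slightly longer but yields the same conclusions and makes the uniqueness statement quantitative.
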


 \begin{proof}
   It is standard that the quotient of a K3 surface by a fixed-point-free involution is an Enriques
   surface.  So suppose $\sigma$ has fixed points.  
   The
   Hurwitz-type formula
   \begin{equation*}
     \textstyle
     0=2K_X=2(f^*(K_S)+\calE)=f^*(2K_S+\calC)
   \end{equation*}
   gives $\calC\sim-2K_S$.  So $\calC$ is a bi-anticanonical divisor.  There are no other
   effective bi-anticanonical divisors because
   the components of $\calC$ are 
   disjoint with
   negative self-intersection.  The only candidate for an effective 
   divisor in $|-K_S|$ is $\frac12\calC$,
   but this is not a divisor, so 
   $|-K_S|=\emptyset$.  It is well-known that the quotient of a K3 surface by an involution is either birationally an Enriques surface, or a rational surface, or a K3 surface. The latter case happens if and only if the involution has eight isolated fixed points. So, in our case $S$ must be a rational surface. We can also use the Castelnuovo rationality criterion: $H^1(S,\calO_S) = 0,   |2K_S| = \emptyset$ implies $S$ is rational. The first equality follows from $H^1(X,\calO_X) = 0$.
  \end{proof}

 One of the goals of this paper is to understand $\Aut(S)$.  We will use some elliptic fibrations
 $|G_{ab}|$
 of $S$ in order to show that $\Aut(S)$ acts faithfully on a certain lattice $\Lambda$ in $\Num_\Q(S)$.
 To construct these elliptic fibrations we begin with some elliptic fibrations of $X$.
 
Consider the pencil of planes in $\bbP^3$ that contain $L_{ab}$.  Each plane meets $H$ in $L_{ab}$
and a plane cubic curve.  The total transforms 
in $X$ of these residual cubics form a pencil  of elliptic curves, 
which we identify with the elliptic fibration $X\to \bbP^1$ it defines. 
We write 
$|\Gtilde_{ab}|$ for the corresponding linear system.  
By taking the plane to be the face $y_a=0$ of the Sylvester pentahedron, one
can express the linear system as
$$|\Gtilde_{ab}|=\bigl|L_{ac}+L_{ad}+L_{ae}+E_{bc}+E_{bd}+E_{be}\bigr|$$
where $\{c,d,e\}=\{0,\dots,4\}-\{a,b\}$.

One can check that if $z_1,z_2\in H$ have all coordinates nonzero, and lie on a plane containing
$L_{ab}$, then the same holds for $\sigma(z_1)$ and $\sigma(z_2)$.
It follows that our elliptic fibration of $X$ is  $\sigma$-invariant, so
it descends to an elliptic fibration of $S$.  
Furthermore, the action of $\sigma$ on the base $\bbP^1$ of $|\Gtilde_{ab}|$
is nontrivial, which implies that the corresponding linear system $|G_{ab}|$ on $S$ is given by
\begin{equation*}
  |G_{ab}|=\bigl|U_{ac}+U_{ad}+U_{ae}+U_{bc}+U_{bd}+U_{be}\bigr|.
\end{equation*}

Now we define the lattice $\Lambda$ on which $\Aut(S)$ will act faithfully. It was first introduced by S. Mukai in his unpublished work  on automorphisms of Coble surfaces, 
so we will refer to it as the \emph{Mukai lattice}. 
Let $C_1,\dots,C_k$ be the components of the bi-anticanonical divisor~$\calC$.  Let
$\Num(S)'$ 
be the lattice in $\Num_\Q(S)$ generated by $\Num(S)$ and $\frac12C_1,\dots,\frac12C_k$.
Let $\Lambda$ be the orthogonal complement of $C_1,\dots,C_k$ in $\Num(S)'$.

\begin{lem}
  \label{LemFaithful}
  $\Aut(S)$ acts faithfully on $\Lambda$, which is integral, even and 
  unimodular of signature $(1,9)$, and spanned 
  by the $U_{ab}$ and the 
\begin{equation}
  \label{Eqfab}
  \textstyle
f_{ab}=\frac12
\bigl(U_{ac}+U_{ad}+U_{ae}+U_{bc}+U_{bd}+U_{be}\bigr)
\end{equation}
\end{lem}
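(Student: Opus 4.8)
The plan is to compute $\Lambda$ completely explicitly out of the Petersen configuration, and to read off faithfulness from the rigidity of that configuration; I would treat the lattice structure first and faithfulness last. For the lower bound on the lattice, note that the curves $U_{ab}$ are disjoint from the branch curves $C_i$ (the images of $E_{ab}$ and $L_{ab}$ do not meet the exceptional curves over the new nodes), so each $U_{ab}$ lies in $\Num(S)$ and is orthogonal to every $C_i$; likewise $f_{ab}\cdot C_i=\tfrac12 G_{ab}\cdot C_i=0$ since each summand of \eqref{Eqfab} is orthogonal to $C_i$. Let $P$ be the sublattice spanned by the ten $U_{ab}$. Its Gram matrix is $-2I+A$, where $A$ is the adjacency matrix of the Petersen graph, whose eigenvalues are $3,1,-2$ with multiplicities $1,5,4$; hence the eigenvalues of $-2I+A$ are $1,-1,-4$, so $P$ has signature $(1,9)$, rank $10$, and determinant $1\cdot(-1)^5\cdot(-4)^4=-2^8$. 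In particular the $U_{ab}$ are linearly independent.

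Next I would enlarge $P$ by the $f_{ab}$. A direct check of \eqref{Eqfab} against the Petersen intersection rule shows that $f_{ab}\cdot U_{cd}$ and $f_{ab}\cdot f_{cd}$ are always integers and that $f_{ab}^2=\tfrac14 G_{ab}^2=0$, so $\Lambda_0:=\langle U_{ab},f_{ab}\rangle$ is integral; and because each generator has even self-intersection ($-2$ or $0$) while the Gram matrix is integral, $\Lambda_0$ is automatically even. It then remains to see that the $f_{ab}$ cut out exactly an index-$16$ overlattice of $P$: reducing them modulo $P$ and computing in the discriminant group $P^{*}/P$ (of order $2^8$), one checks that their images span a maximal isotropic subgroup, necessarily of order $2^4=16$. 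Consequently $\det\Lambda_0=-2^{8}/16^{2}=-1$, so $\Lambda_0$ is unimodular; being even, unimodular and of signature $(1,9)$, it is isometric to $U\oplus E_8$.

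To identify $\Lambda_0$ with the abstractly defined $\Lambda$, I would first show $f_{ab}\in\Num(S)'$; \emph{this is the one genuinely geometric input and the main obstacle.} The claim is that $f_{ab}$ is, up to a half-integral combination of the $C_i$, the class of a multiplicity-two fiber of the genus-one pencil $|G_{ab}|$. Indeed $|G_{ab}|$ is the descent of $|\Gtilde_{ab}|$ along $f$, and $\sigma$ acts on the base $\bbP^1$ as a nontrivial involution; over a fixed point of this involution the fiber is $\sigma$-invariant, and its image is $2f_{ab}$ modulo those branch components $C_i$ along which $\sigma$ has fixed points, which places $f_{ab}$ in $\Num(S)'$ and pins down the needed $\tfrac12 C_i$ corrections. (When there is a single new node, the remaining invariant fiber meets no $\sigma$-fixed curve, so it descends to an honest double fiber and $f_{ab}\in\Num(S)$.) Granting this, $\Lambda_0\subseteq\Lambda$. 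The orthogonality condition defining $\Lambda$ forces $v_0\cdot C_i\in 2\bbZ$ for the integral part $v_0$ of any $v\in\Lambda$, which makes $\Lambda$ integral; and since $\Num_\bbQ(S)$ is spanned by the $U_{ab}$ and the $C_i$, i.e.\ $\rho(S)=10+k$, as follows from the analysis of \cite{DolgachevSalem}, the complement $\Lambda$ has rank $10$. A full-rank unimodular sublattice of an integral lattice is the whole lattice, so $\Lambda_0=\Lambda$ and all the asserted properties transfer.

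Finally, faithfulness. Suppose $g\in\Aut(S)$ acts trivially on $\Lambda$. Then $g$ fixes the class of each $(-2)$-curve $U_{ab}$, hence fixes $U_{ab}$ setwise, because an effective divisor numerically equivalent to a curve of negative self-intersection must equal that curve (it contains it as a component, and the residue is numerically trivial and effective, so zero). Each $U_{ab}\cong\bbP^1$ meets its three Petersen neighbours in three distinct points, all fixed by $g$, so $g$ restricts to the identity on every $U_{ab}$. At a point $p=U_{ab}\cap U_{cd}$ where two of these curves cross transversally, $dg_p$ is the identity on $T_pU_{ab}\oplus T_pU_{cd}=T_pS$; in characteristic $0$ the fixed locus is smooth with $T_p(S^g)=\ker(dg_p-\id)=T_pS$, so $S^g$ is open near $p$ and $g=\id$. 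Since this uses only the $U_{ab}\in\Lambda$, it is insensitive to how $g$ permutes the $C_i$, and thus gives faithfulness on $\Lambda$ for every $k$.
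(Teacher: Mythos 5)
The lattice-theoretic half of your proposal is correct and follows essentially the paper's own route: orthogonality of the $U_{ab}$ and $f_{ab}$ to the branch components, the $\sigma$-invariant fiber of $|\Gtilde_{ab}|$ descending to give $f_{ab}=B+\frac12C_1+\cdots+\frac12C_l\in\Num(S)'$, the same integrality computation for $\Lambda$, and the closing observation that a full-rank unimodular sublattice of an integral lattice is the whole lattice (the paper's ``by integrality, $\Lambda$ can be no larger''). Your Petersen-eigenvalue computation of $\det(-2I+A)=-2^8$ and the index-$16$ isotropic subgroup is a nice way of making the paper's ``one can check that the lattice spanned by the $U_{ab}$ and $f_{ab}$ is even unimodular'' explicit.

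The faithfulness argument, however, has a genuine gap at its final step. The claim that the fixed locus is smooth with $T_p(S^g)=\ker(dg_p-\id)$ is a theorem about automorphisms of \emph{finite order} (in characteristic $0$ one linearizes $g$ near a fixed point by averaging); for a general automorphism it is false. For instance, the unipotent transformation $g\colon[x:y:z]\mapsto[x:x+y:z]$ of $\bbP^2$ has infinite order, fixes the line $x=0$ pointwise, and satisfies $dg_p=\id$ at $p=[0:1:0]$; yet $g\neq\id$, so $\ker(dg_p-\id)$ is all of $T_p\bbP^2$ while the fixed locus is only a curve. Since the triviality of the kernel of $\Aut(S)\to\Or(\Lambda)$ is precisely what the lemma asserts, you cannot assume your $g$ has finite order without a separate argument (e.g.\ that automorphisms of $S$ fixing an ample class form a finite group, which for the rational Coble surfaces would itself need proof). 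A second shortfall: your argument is explicitly confined to characteristic $0$, whereas the lemma carries only the standing hypothesis $\cha\Bbbk\neq2,3$ and is invoked in positive characteristic, e.g.\ in the proof of Theorem~\ref{ThmAutGenerators}.

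Both defects are avoided by the paper's ending, which you could splice in after your (correct) observation that $g$ fixes each $U_{ab}$ pointwise: $g$ then preserves the elliptic pencil $|G_{01}|$, and since $U_{23},U_{24},U_{34}$ are fixed pointwise and each has intersection number $2$ with the fiber class (hence meets every fiber), $g$ preserves every fiber and acts on the generic fiber---a genus-one curve over the function field of $\bbP^1$---fixing the reduced length-$6$ subscheme cut out by $U_{23}\cup U_{24}\cup U_{34}$. A nontrivial automorphism of a genus-one curve over a field of characteristic $\neq2,3$ fixes at most four points, so $g=\id$, with no hypothesis on the order of $g$ or on the characteristic.
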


\begin{remark}
  \label{rkMukaiLattice}
Up to isometry there is a unique even unimodular lattice of signature $(1,9)$, often called $E_{10}$.
So $\Lambda\iso E_{10}$.  
This is no surprise because if there are no new nodes
then 
$S$ is an Enriques surface
and $\Lambda$ coincides with $\Num(S)$, which   
is well-known to be a copy of $E_{10}$.
Instead of $\Lambda$, it might seem simpler to consider the orthogonal complement of $C_1,\dots,C_k$ in
$\Num(S)$.  This turns out to be inconvenient because the isometry type of the
resulting lattice depends on the number of new nodes.  
(It is $E_{10}$ if there is one new node, and non-unimodular if
there is more than one.)
The Mukai lattice is the same in all cases.
\end{remark}

\begin{proof}
First we note that $\Aut(S)$ acts on $\Lambda$.  This is because
  $\Aut(S)$ preserves the unique member of $|-2K_S|$, namely $\calC$.  Therefore it permutes the 
  components of $\calC$.  So it preserves $\Num(S)'$ and their orthogonal complement 
  in $\Num(S)'$.  

  Next we prove $\dim\Lambda=10$.  We remarked above that the  $k=0$ case is 
  a  property of Enriques surfaces.  So suppose 
   $k>0$, in which case $S$ is rational.  Then
  $\dim(\Num(S))=10-K_S^2$.  (Blowing up a point
    increases both sides by~$1$, while
    blowing down a $(-1)$-curve decreases both sides by~$1$. So it is enough to check 
  equality  for $\bbP^2$.)  Since
  $K_S^2=\frac14\calC^2=-k$  we get $\dim(\Num(S))=10+k$.
  Since $\calC$ has $k$ components, we get 
  $\dim\Lambda=10$.  

  Now we show that $\Lambda$ is integral.  Any $x\in\Lambda$ may be expressed
  as $y+\sum_{i=1}^k r_iC_i/2$ where $y\in\Num(S)$ and $r_1,\dots,r_k\in\Z$.
  Rewriting this as $y=x-\sum r_i C_i/2$,
  expressing $x'\in\Lambda$ similarly, and using $\Lambda\perp C_i$ gives
  \begin{equation*}
    y\cdot y' = x\cdot x' + \sum r_ir_i'\frac{C_i\cdot C_i}{4}
    = x\cdot x'-\sum r_i r_i'
  \end{equation*}
  This proves $x\cdot x'\in\Z$.

  Next we show that $\Lambda$ contains the $f_{ab}$. 
  We already remarked that $\sigma$ acts nontrivially
  on the base $\bbP^1$ of each $|\Gtilde_{ab}|$.  Therefore it sends exactly two fibers of
  this elliptic fibration to themselves.  
  Choose one, call it $\Ftilde$, 
  and write $\Pi$ for the corresponding plane in $\bbP^3$.
  Now, $\Ftilde$ is the total transform of the cubic plane curve residual to
  $L_{ab}$ in $\Pi$, and we write it as $\Ftilde=A+E_1+\cdots+E_l$.  
  Here $E_1,\dots,E_l$ are the  $(-2)$-curves over the new nodes in $\Pi$ (if any), and $A$
  is the sum of the proper transform of the
  plane cubic curve and possibly the $(-2)$-curves over some vertices of the pentahedron.

  We will show that the corresponding fiber $F$ of $|G_{ab}|$ is something like 
  a double fiber.
  Write $B$ for the image of $A$ in $S$, and choose the labeling so that $C_1,\dots,C_l\subseteq S$
  are the images of $E_1,\dots,E_l$.    
  Because $\sigma$ acts freely on 
  $A$ away from $E_1,\dots,E_l$, which it fixes pointwise,
  we have $F=2B+C_1+\cdots+C_l$.
  That is, $f_{ab}=B+\frac12C_1+\cdots+\frac12C_l$, which lies in $\Num(S)'$. 
  %(This helps
  %motivate the definition of $\Num(S)'$.)
  As half the class of a fiber, $f_{ab}$ is orthogonal to every $C_1,\dots,C_l$, so $f_{ab}\in\Lambda$.
  (Remark: if $k=1$ then 
    $E_1$ lies in one of the $\sigma$-invariant fibers, so 
    we choose $\Ftilde$ to be the other one.  
    Then $f_{ab}=B$, so $f_{ab}$ lies in $\Num(S)$ not just $\Num(S)'$.
    This leads to Remark~\ref{rkMukaiLattice}'s isomorphism $\Lambda\iso E_{10}$ in the
  case of one new node.)

  The $U_{ab}$ lie in $\Lambda$ and 
  have inner product matrix of rank~$10$, so they span $\Lambda$ up to finite index.
  Therefore the signature of $\Lambda$ is the signature of the intersection pairing on the $U_{ab}$,
  which is $(1,9)$.  One can check that the lattice spanned by the $U_{ab}$ and $f_{ab}$
  is even unimodular.  By integrality, $\Lambda$ can be no larger than this unimodular lattice.
  So we have proven all our claims
  except for the faithfulness of the action.

  For this,
  suppose $g:S\to S$ is an isomorphism that 
  preserves each $U_{ab}$.  
  We know that $U_{01}$ meets each of $U_{23},U_{34},U_{24}$ in a single point.
  So each of these points is fixed.  Since these points are distinct and $U_{01}$ is isomorphic to 
  $\bbP^1$, we see that $g$
  fixes $U_{01}$ pointwise.  The same holds with other indices in place of $0,1$, so
  $g$ fixes every $U_{ab}$ pointwise.

  Next, $g$ preserves the elliptic pencil $|G_{01}|$.
  Each of the curves $U_{23},U_{24},U_{34}$ has 
  intersection number~$2$ with the class of the fiber, hence meets every fiber.
  So  $g$  preserves every fiber.  Furthermore, 
  the restriction of $g$ to a fiber preserves its intersection with
  $U_{23}\cup U_{24}\cup U_{34}$.  
  Regarding the generic point of $S$ is an elliptic curve over the function 
  field of $\bbP^1$, this shows that $g$ acts as the identity on a reduced $0$-dimensional subscheme
  of length~$6$.  It follows that $g$ is the identity.
\end{proof}

To each vertex $P_{ab}$ of the pentahedron
is associated a birational involution $\gtilde_{ab}$ of the Hessian surface.
Namely, projection away from $P_{ab}$ defines a dominant rational map $H\dasharrow\bbP^2$ of degree~$2$.
This realizes the function field of $H$ as a quadratic extension of that of $\bbP^2$, and
$\gtilde_{ab}$ is the nontrivial automorphism of this field extension.  
Again using the biregularity of birational maps between  K3 surfaces, we regard the
$\gtilde_{ab}$ as automorphisms of $X$.  

One can check that if $z_1,z_2\in H$ have all coordinates nonzero, and
lie on a line through $P_{ab}$, then the same holds for $\sigma(z_1)$ and $\sigma(z_2)$.
It follows that the $\gtilde_{ab}$
commute with $\sigma$ and therefore descend to automorphisms $g_{ab}$ of $S$.
The next step  is to examine how the $g_{ab}$ act on~$\Lambda$.  
The nature of $g_{ab}$ turns out to depend on whether the equality $\lambda_a=\lambda_b$
holds.  
(Remark: it is well-known that this holds if and only if $P_{ab}$ is an Eckardt point 
of the cubic surface $V_3$; see Example 9.1.25  of \cite{CAG}.)

To describe the $g_{ab}$ we must introduce some vectors $\alpha_{ab}\in\Lambda$ and
some isometries $t_{ab}$ of $\Lambda$.
We define 
\beq\label{Eqalphaab}
\alpha_{ab}=f_{ab}-U_{ab},
\eeq
where $f_{ab}$ is from \eqref{Eqfab}.
One can check that $\alpha_{ab}$ has self-intersection~$-2$.  
For later use we record
\begin{align}
  \label{EqUdotAlpha}
\alpha_{ab}\cdot U_{cd}={}&
  \begin{cases}
    2&\hbox{if $\{a,b\}=\{c,d\}$}\\
    0&\hbox{otherwise}
  \end{cases}
  \\
  \label{EqAlphaDotAlpha}
  \alpha_{ab}\cdot \alpha_{cd}={}&
  \begin{cases}
    1&\hbox{if $\{a,b\}\cap\{c,d\}$ is a singleton}\\
    0&\hbox{if $\{a,b\}\cap\{c,d\}$ is emtpy}
  \end{cases}
\end{align}
The group $\frakS_5$ of permutations of $0,\dots,4$ acts on $\Lambda$ by permuting the subscripts
of the $U_{ab}$ and $f_{ab}$.  We write 
$t_{ab}$ for the isometry of $\Lambda$ corresponding to  the transposition $(ab)$.

\begin{lem}[Corollary 4.3 of \cite{DolgachevSalem}]
  \label{L2.3}
  Suppose $\lambda_a\neq\lambda_b$.  Then $g_{ab}\in\Aut(S)$ 
  acts on $\Lambda\sset\Num(S)$ by the composition of
  $t_{ab}$ and the reflection in $\alpha_{ab}$.
\end{lem}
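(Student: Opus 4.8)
The plan is to use the faithful action of $\Aut(S)$ on $\Lambda$ (Lemma~\ref{LemFaithful}) to reduce everything to a computation on the ten curves $U_{cd}$. Since these have inner product matrix of rank~$10$, they form a $\bbQ$-basis of $\Lambda\otimes\bbQ$, so it suffices to check that $g_{ab}^*$ and $t_{ab}\circ r_{ab}$ agree on each $U_{cd}$, where $r_{ab}$ denotes the reflection in $\alpha_{ab}$. First I would record what the target does. Because $t_{ab}$ fixes both $f_{ab}$ and $U_{ab}$, it fixes $\alpha_{ab}=f_{ab}-U_{ab}$, so $t_{ab}$ and $r_{ab}$ commute and their composite is an involution. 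Using \eqref{EqUdotAlpha} one gets $r_{ab}(U_{ab})=U_{ab}+2\alpha_{ab}=2f_{ab}-U_{ab}$ and $r_{ab}(U_{cd})=U_{cd}$ for $\{c,d\}\neq\{a,b\}$. Hence the composite $\phi:=t_{ab}\circ r_{ab}$ fixes the three curves $U_{cd}$ with $\{c,d\}\cap\{a,b\}=\emptyset$, interchanges $U_{ac}\leftrightarrow U_{bc}$ for $c\notin\{a,b\}$, and sends $U_{ab}\mapsto 2f_{ab}-U_{ab}$.

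The geometric heart is to match this on the nine curves $U_{cd}$ with $\{c,d\}\neq\{a,b\}$. Taking $\{a,b\}=\{0,1\}$ and $P_{01}=(1:-1:0:0:0)$, projection from $P_{01}$ is $(y_0:\cdots:y_4)\mapsto(y_2:y_3:y_4)$, and the line through $P_{01}$ and a point $q$ is $\{q+sP_{01}\}$. Substituting into the cleared equation $\sum_a\prod_{b\neq a}\lambda_by_b=0$ of $H$ and intersecting, I would verify two things directly. For $q\in L_{0c}$ (so one coordinate among $y_2,y_3,y_4$ vanishes) all but one monomial drops out and the equation reduces to $s(q_c-s)\cdot(\text{a nonzero factor})=0$; the root $s\neq0$ yields the point obtained from $q$ by interchanging its $0$- and $1$-coordinates, which lies on $L_{1c}$. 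Thus $\gtilde_{01}(L_{0c})=L_{1c}$. For the three lines $L_{cd}$ with $\{c,d\}\cap\{0,1\}=\emptyset$, the line through $P_{01}$ and $q\in L_{cd}$ is $L_{cd}$ itself, so $\gtilde_{01}$ preserves $L_{cd}$ setwise. Since $\gtilde_{ab}$ commutes with $\sigma$ and $\sigma$ interchanges $L_{cd}$ with $E_{cd}$, the same permutation holds for the $E_{cd}$. Descending through $f$, using $g_{ab}\circ f=f\circ\gtilde_{ab}$ and $f(E_{cd})=f(L_{cd})=U_{cd}$, gives $g_{ab}^*(U_{ac})=U_{bc}$ and $g_{ab}^*(U_{cd})=U_{cd}$ for the disjoint pairs, exactly matching $\phi$ on these nine classes.

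It remains to treat $U_{ab}$ itself, and here is where the hypothesis $\lambda_a\neq\lambda_b$ enters and where I expect the only real obstacle. Knowing $g_{ab}^*$ on the other nine $U_{cd}$ already forces $g_{ab}^*(U_{ab})$ into a rank-one coset: the isometry relations $g_{ab}^*(U_{ab})\cdot g_{ab}^*(U_{cd})=U_{ab}\cdot U_{cd}$ pin down all inner products of $g_{ab}^*(U_{ab})$ with the nine already-determined images, and imposing $(g_{ab}^*(U_{ab}))^2=-2$ then leaves exactly the two candidates $U_{ab}$ and $2f_{ab}-U_{ab}$ (one checks that $U_{ab}$ satisfies the same nine linear conditions, using the Petersen-graph intersection numbers). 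To exclude the first candidate I would return to the projection: for $q\in L_{ab}=L_{01}$ the residual intersection point has $s$-parameter proportional to $\lambda_a-\lambda_b$, so when $\lambda_a\neq\lambda_b$ the curve $\gtilde_{ab}(L_{ab})$ is neither $L_{ab}$ nor $E_{ab}=\sigma(L_{ab})$; hence $g_{ab}$ does not fix $U_{ab}$, forcing $g_{ab}^*(U_{ab})=2f_{ab}-U_{ab}$. This matches $\phi$ on the last basis vector, so $g_{ab}^*=\phi=t_{ab}\circ r_{ab}$ on all of $\Lambda$. The vanishing of this $s$-parameter when $\lambda_a=\lambda_b$ is precisely the Eckardt-point degeneration excluded by the hypothesis, and it is what signals the different behavior of $g_{ab}$ in that case.
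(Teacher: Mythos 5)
Your proposal follows essentially the same route as the paper's proof: first determine the action on the nine classes $U_{cd}$ with $\{c,d\}\neq\{a,b\}$ (the paper reads the transposition action off the pentahedron figure; you verify it by the explicit residual-point computation, which is correct), then pin down the image of $U_{ab}$. Your two-candidate step is the paper's argument in different clothing: since $\alpha_{ab}$ spans the orthogonal complement of the nine $U_{cd}$, your affine line $U_{ab}+t\,\alpha_{ab}$ with its two norm~$-2$ points is exactly the paper's observation that $g_{ab}$ must fix or negate $\alpha_{ab}$, and both exclusions of the fixed case rest on the same fact: $\gtilde_{ab}$ moves $L_{ab}$ off $L_{ab}\cup E_{ab}$ when $\lambda_a\neq\lambda_b$ (the paper identifies $\Pi_{ab}\cap H$ as $L_{ab}$ plus an irreducible cubic; you find the residual root $s\propto\lambda_a-\lambda_b$ — the same computation in coordinates). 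Your bookkeeping with \eqref{EqUdotAlpha} and the Petersen intersection numbers checks out.

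There is one step whose justification, as written, does not suffice: the claim that $\gtilde_{ab}$ preserves each $L_{cd}$ with $\{c,d\}\cap\{a,b\}=\emptyset$ \emph{because} the line through $P_{ab}$ and $q\in L_{cd}$ is $L_{cd}$ itself. On such a line the ``swap the two residual points'' description of $\gtilde_{ab}$ breaks down: $L_{cd}$ passes through the projection center and lies in $H$, so it is contracted by the projection, and your reasoning shows only that $\gtilde_{ab}(L_{cd})$ lies in the curve part of the fiber over the image point. That fiber has three components: for $\{a,b\}=\{0,1\}$ and $\{c,d\}=\{2,3\}$ it is $L_{23}\cup E_{04}\cup E_{14}$, since the nodes $P_{04}$ and $P_{14}$ also lie on the projecting line $L_{23}$. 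And the involution genuinely permutes this fiber nontrivially: from your own computation $\gtilde_{01}(L_{04})=L_{14}$, together with commutation with $\sigma$, it swaps $E_{04}\leftrightarrow E_{14}$ — so ``maps into the fiber'' alone does not give ``preserves $L_{23}$''. Fortunately the repair is already contained in your argument: if $\gtilde_{01}(L_{23})=E_{04}$, then applying the involution again gives $L_{23}=\gtilde_{01}(E_{04})=E_{14}$, which is absurd, and likewise for $E_{14}$; hence $\gtilde_{01}(L_{23})=L_{23}$. (The paper is equally terse at this point, asserting the permutation of the lines from Figure~\ref{sylvester}.) With that one-line patch your proof is complete and agrees with the paper's.
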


\begin{proof}
  We write $\Pi_{ab}\subseteq\bbP^3$ for the plane spanned by $P_{ab}$ and $L_{ab}$.
  We claim that the projection involution $\gtilde_{ab}$ sends $L_{ab}$ to neither itself nor $E_{ab}$.  
  In fact, 
  from $\lambda_a\neq\lambda_b$ it follows that $\Pi_{ab}\cap H$ consists of
   $L_{ab}$ and an
  irreducible plane cubic with a singularity at $P_{ab}$.  
  By definition, $\gtilde_{ab}$ exchanges  $L_{ab}$
  and this cubic (or rather its proper transform).  

  Next, looking at Figure \ref{sylvester} shows that 
  $\gtilde_{ab}$ permutes the lines $L_{cd}$ other than $L_{ab}$ via the action of the transposition 
  $(ab)\in \frakS_5$ on their subscripts.  This shows that $g_{ab}$ acts on the $U_{cd}$ other than $U_{ab}$
  as $t_{ab}$.  By considering their orthogonal complement, we see that $g_{ab}$ either fixes or negates
  $\alpha_{ab}$.  The first case would lead to $g_{ab}(U_{ab})=U_{ab}$, which contradicts the
  previous paragraph.  So
  $g_{ab}$ negates $\alpha_{ab}$.  We have described the action of $g_{ab}$ on a basis for
  $\Lambda\tensor\Q$, and one can now check agreement with the lemma.
\end{proof}

The reader may skip the rather technical
second paragraph of the next lemma. It is needed only for
proving Corollary~\ref{CorOrbitsOnCurves}, which itself  is not needed elsewhere in the paper.

\begin{lem}\label{lem:alphaab}
  Suppose $\lambda_a=\lambda_b$.  
  Then $\gtilde_{ab}$ 
  acts on $H$
  as the coordinate transposition $y_a\leftrightarrow y_b$,
  and $g_{ab}$ acts on $\Lambda\sset\Num(S)$ as $t_{ab}$.  
  Furthermore, $\alpha_{ab}\in\Lambda$ is represented by a rationally effective divisor.

  More specifically, if the plane $\Pi_{ab}\sset\bbP^3$ spanned by $L_{ab}$ and $P_{ab}$
  contains no new node, then $\alpha_{ab}$ is the class of a $(-2)$-curve.  Otherwise
  $\Pi_{ab}$ contains exactly two new nodes and
  $\alpha_{ab}=2[D]+\frac12[C_i]+\frac12[C_j]$, where $D$ is a $(-1)$-curve and $C_i,C_j$ 
  are the $(-4)$-curves corresponding to the two new nodes.
  %there exists an effective divisor on $S$ which maps to $\alpha_{ab}$ under
  %the orthogonal projection map $\Num(S)\to\Lambda$.
\end{lem}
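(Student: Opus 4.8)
The plan is to treat the three assertions in turn, putting almost all of the work into the description of $\alpha_{ab}$. For the first two: since $\lambda_a=\lambda_b$, both defining equations of $H$ are symmetric under the transposition $\tau\colon y_a\leftrightarrow y_b$, so $\tau$ restricts to an automorphism of $H$. In the coordinates $\sum y_c=0$, projection from $P_{ab}$ is the map $(y_0:\cdots:y_4)\mapsto(y_c:y_d:y_e)$ with $\{c,d,e\}=\{0,\dots,4\}-\{a,b\}$, and $\tau$ fixes these three coordinates, so $\tau$ lies over the identity of the target $\bbP^2$. Being a nontrivial involution of $H$ over $\bbP^2$, it must be the nontrivial deck transformation, i.e.\ $\gtilde_{ab}=\tau$. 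Consequently $\gtilde_{ab}$ carries $L_{cd}\mapsto L_{\tau\{c,d\}}$ and $E_{cd}\mapsto E_{\tau\{c,d\}}$, so $g_{ab}$ sends $U_{cd}\mapsto U_{\tau\{c,d\}}$; this is precisely the action of $t_{ab}$ on the $U_{cd}$, and since these span $\Lambda$ up to finite index and $\Lambda$ is torsion free, $g_{ab}$ acts as $t_{ab}$ on all of $\Lambda$.

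For $\alpha_{ab}=f_{ab}-U_{ab}$ I would first locate the right $\sigma$-invariant plane. Computing the action of $\sigma$ on the pencil of planes through $L_{ab}$ shows that its two fixed members are $\{y_a\pm y_b=0\}$, and the one containing $P_{ab}$ is $\{y_a+y_b=0\}$, so this is $\Pi_{ab}$. Substituting $y_b=-y_a$ into the quartic and using $\lambda_a=\lambda_b$, the two monomials carrying the factor $y_cy_dy_e$ cancel, leaving $y_a^2$ times a quadratic form in $y_c,y_d,y_e$; since $\{y_a=0\}\cap\Pi_{ab}=L_{ab}$, this gives $\Pi_{ab}\cap H=2L_{ab}+\calQ$, where $\calQ$ is a conic not involving $y_a$, hence a pair of lines (or a double line) through the vertex $P_{ab}$. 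The residual cubic attached to $\Pi_{ab}$ in $|\Gtilde_{ab}|$ is thus $L_{ab}+\calQ$, and its total transform is the chosen $\sigma$-invariant fiber $\Ftilde$, so in the notation of the proof of Lemma~\ref{LemFaithful} we have $f_{ab}=B+\tfrac12\sum_i C_i$, the sum being over the new nodes lying on $\Pi_{ab}$.

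Next I would count those new nodes and determine the $\sigma$-action on $\calQ$. A new node on $\Pi_{ab}$ has all coordinates nonzero and satisfies $\lambda_cy_c^2=\lambda_dy_d^2=\lambda_ey_e^2$ (the condition for the pair $a,b$ being automatic there); writing $u,v,w$ for the reciprocal square roots of $\lambda_c,\lambda_d,\lambda_e$, such a direction exists iff some $\pm u\pm v\pm w=0$, and at most one such relation can hold, as two would force one of $u,v,w$ to vanish. Hence $\Pi_{ab}$ carries either no new node or exactly two, namely the points $(\pm y_a^\ast:y_c:y_d:y_e)$ in the special direction, interchanged by $\gtilde_{ab}$. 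Computing $\sigma$ on a line of $\calQ$ shows it fixes the component whose direction meets the new-node condition and sends any other component to the line through $P_{ab}$ with the reciprocal direction; since $\sigma$ preserves $\calQ$ and is an involution, in the two-node case the two components must coincide, so $\calQ=2M$ is a double line and $\sigma$ preserves $M$ with the two new nodes as its fixed points, whereas in the no-node case $\sigma$ interchanges the two distinct lines of $\calQ$.

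Finally I would identify $\alpha_{ab}$ by matching intersection numbers against the spanning set $\{U_{cd}\}\cup\{C_m\}$ of $\Num_\bbQ(S)$, using $\tilde M^2=-2$ (adjunction on the K3 surface $X$), the incidences of $M$ with $L_{ab}$, $E_{ab}$ and the new-node exceptionals, the projection formula $f^\ast x\cdot f^\ast y=2\,x\cdot y$, and $f^\ast C_m=2E_m$. In the no-node case $f^\ast\Gamma=\tilde M_1+\tilde M_2$ for $\Gamma:=f(\tilde M_1)$ yields $\Gamma^2=-2$, $\Gamma\cdot U_{ab}=2$ and $\Gamma\cdot U_{cd}=\Gamma\cdot C_m=0$ otherwise, so $\Gamma$ is a $(-2)$-curve representing $\alpha_{ab}$ by \eqref{EqUdotAlpha}. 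In the two-node case $f^\ast D=\tilde M$ for $D:=f(\tilde M)$ gives $D^2=-1$, $D\cdot U_{ab}=1$, $D\cdot C_i=D\cdot C_j=1$ and all other products zero, whence $2[D]+\tfrac12[C_i]+\tfrac12[C_j]$ has the same pairings as $\alpha_{ab}$ and so equals it; this divisor is manifestly rationally effective. I expect the main obstacle to be precisely the two-node case: proving that $\calQ$ degenerates to a double line and that $\sigma$ preserves it rather than swapping branches, which is what produces the $(-1)$-curve $D$ and the coefficient $2$. The transposition and permutation statements, and the no-node subcase, should be routine once $\Pi_{ab}$ and $\Ftilde$ are in place.
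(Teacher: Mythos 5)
Your proof is correct, and its skeleton is the paper's: identify $\gtilde_{ab}$ with the transposition (the paper phrases this via collinearity of $x$, $x'$, $P_{ab}$ rather than your deck-transformation argument over $\bbP^2$, but it is the same observation), then analyze the $\sigma$-invariant fiber $\Ftilde$ over $\Pi_{ab}$ and extract $f_{ab}$, hence $\alpha_{ab}=f_{ab}-U_{ab}$. You diverge genuinely in the two places where the real content lies. For the two-node dichotomy, the paper argues synthetically: a new node is a singular point of $H$, so the section $2L_{ab}+M+M'$ has multiplicity $\geq2$ there away from $L_{ab}$, forcing the node onto both $M$ and $M'$, hence $M=M'$; the count of exactly two nodes then comes from $\sigma$ having exactly two fixed points on $M$. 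You instead compute: the conic $\calQ$, the reciprocal-direction action of $\sigma$ on lines through $P_{ab}$, and the criterion $\pm u\pm v\pm w=0$ (with at most one such relation possible). Your route costs more computation but buys more: an explicit criterion in the $\lambda$'s for when $\Pi_{ab}$ carries new nodes, plus their coordinates, which the paper's argument does not give. Similarly, at the end the paper reads $\alpha_{ab}$ directly off the fiber decomposition ($f_{ab}=U_{ab}+D+\frac12C_i+\frac12C_j$, with $D^2=-1$ forced cheaply by $f_{ab}^2=0$), whereas you recompute all pairings by the projection formula and match against $\{U_{cd}\}\cup\{C_m\}$; this is valid, since $\Num_\Q(S)=\Lambda_\Q\oplus\langle C_1,\dots,C_k\rangle_\Q$ and the form is nondegenerate, but it is heavier than needed once $f_{ab}=B+\frac12\sum_iC_i$ is in hand. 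Three small points to tighten: (i) the full new-node condition is that \emph{all five} quantities $\lambda_fy_f^2$ are equal --- your parenthetical disposes only of the pair $(a,b)$, and $y_a^\ast$ must be pinned down by $\lambda_a(y_a^\ast)^2=\lambda_cy_c^2$; you should also note that any point with $\sum y_f=0$ and all $\lambda_fy_f^2$ equal to $s$ automatically lies on the quartic, because $\sum 1/(\lambda_fy_f)=(1/s)\sum y_f$, which is what makes your two points genuine nodes; (ii) your $\Gamma^2=-2$ in the no-node case uses $\tilde M_1\cdot\tilde M_2=0$ in $X$, i.e.\ that blowing up $P_{ab}$ separates the two lines (distinct tangent directions), which should be said; (iii) to call $\Gamma$ a $(-2)$-curve you should note $f|_{\tilde M_1}$ maps isomorphically onto $\Gamma$ since $\sigma$ interchanges the disjoint curves $\tilde M_1,\tilde M_2$. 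All three are routine.
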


\begin{proof}
  Suppose $x$ is a point of $H$, and let $x'$ be its image under the coordinate
  transposition $(ab)$.  Clearly it lies in $H$.  Also,  $x$, $x'$ and $P_{ab}$ are collinear.
  For $x$ generic, this shows that
  the definition of $\gtilde_{ab}$ is to swap $x$ with $x'$.  This proves our claims about $\gtilde_{ab}$
  and $g_{ab}$.

  %We write $\Pi_{ab}$ as in the previous proof.
  Under the hypothesis $\lambda_a=\lambda_b$,  we get $\Pi_{ab}\cap H=2L_{ab}+M+M'$
  where  $M,M'$ are lines through $P_{ab}$ (possibly coincident).  
  We keep the same notation $M,M'$ for their proper transforms in $X$.  
  %Working in  $H$, the lines
  %$M,M'$ do not pass
  %through any other vertices of the pentahedron.  
  %Working in $X$, it follows that
  %$M,M'$ are disjoint from all $E_{cd}$ except $E_{ab}$.
  The fiber $\Ftilde$ of
  $|\Gtilde_{ab}|$ corresponding to $\Pi_{ab}$ is
  \begin{equation*}
  \Ftilde=L_{ab}+M+M'+E_{ab}+E_1+\cdots+E_l
  \end{equation*}
  where $E_1,\dots,E_l$ are the exceptional divisors over the new nodes that lie in~$\Pi_{ab}$ (if any).
  Because $\sigma$ exchanges $L_{ab}$ with $E_{ab}$, it preserves this fiber.
  Arguing as in
  the proof of Lemma~\ref{LemFaithful} (with $A=L_{ab}+M+M'+E_{ab}$) shows that
  \begin{displaymath}
    \textstyle
    f_{ab}=B+\frac12C_1+\cdots+\frac12C_l
  \end{displaymath}
  where $B=U_{ab}+D$ with $D$ an effective divisor.
  This shows that $\alpha_{ab}=f_{ab}-U_{ab}$ is rationally effective.

  %Because $\sigma$ exchanges $L_{ab}$ with $E_{ab}$, it preserves this fiber, hence
  %preserves  $M+ M'$.  
  %Since $M+ M'$ is $\sigma$-invariant and misses all the $E_{cd}$ except $E_{ab}$, 
  %it therefore misses all the $L_{cd}$ except $L_{ab}$.  
  %Writing $C$ for the image of $M+M'$ in $S$,  it follows that $C$ is orthogonal to every
  %$U_{cd}$ except $U_{ab}$, with which it has intersection number~$2$.
  %That is, $C$ has
  %the same intersection pairings  as $\alpha_{ab}$ with all the $U_{cd}$.  This completes the proof.
  A more detailed analysis leads to two cases.  
  If $\Pi_{ab}$ contains no new nodes, then 
  $\sigma$ must act freely on $M\cup M'$.  This forces $M,M'$ to be distinct.
  So $\Ftilde$ is a cycle of four $(-2)$-curves.
  Taking the quotient by $\sigma$ shows that $f_{ab}$ is the sum of two $(-2)$-curves intersecting
  each other twice.  One is $U_{ab}$ and $D$ is the other.

  On the other hand, if $\Pi_{ab}$ contains a new node, then the planar section $M+M'+L_{ab}$
  of $H$ contains that node with multiplicity two.  So the node lies on both $M$ and $M'$.  Since
  $P_{ab}$ also lies on these lines, we get $M=M'$.  Since $\sigma$ has exactly two fixed points on $M$,
  $\Pi_{ab}$ contains exactly two new nodes.  So $\Ftilde=L_{ab}+2M+E_{ab}+E_i+E_j$ where $E_i,E_j$
  are the $(-2)$-curves in $X$ lying over these new nodes.  
  So
  $f_{ab}=U_{ab}+2D+\frac12 C_i+\frac12 C_j$,
  where $D$ is the image of $M$ in $S$.
  Since $D$ is clearly a smooth curve~$D$ that meets $U_{ab},C_i,C_j$ once each,
  the relation $f_{ab}^2=0$ forces $D^2=-1$.
\end{proof}

\section{The nef cone}
\label{SecNef}

We continue in the situation of section~\ref{SecBackground}.  In particular, $\lambda_0,\dots,\lambda_4\neq0$
are the parameters of the cubic surface in Sylvester pentahedral form,   and $S$ is the corresponding
Enriques or Coble surface.  We defined the Mukai lattice as a certain $10$-dimensional
lattice $\Lambda\sset\Num_\Q(S)$,  and showed that $\Aut(S)$ acts faithfully
on it.  
The main object in this section is the intersection of  the real nef cone $\Nef_\R(S)$
with $\Lambda_\R=\Lambda\tensor\R$.  In the next section we will use this
to compute
$\Aut(S)$ in the special case $(\lambda_0,\dots,\lambda_4)=(1,1,1,1,t)$.  But we impose no condition on the
$\lambda_a$ in this section.

The signature of $\Lambda$ is $(1,9)$, so the set of positive-norm lines in $\Lambda_\R$
forms a copy of hyperbolic $9$-space $H^9$.
In section~\ref{SecBackground} we 
defined twenty vectors $U_{ab},\alpha_{ab}\in\Lambda$.  
We define 
\begin{equation*}
  P=\bigl\{
  x\in\Lambda_\R
\bigm|
\hbox{$x\cdot U_{ab}\geq0$ and $x\cdot\alpha_{ab}\geq0$ for all $a,b$}
\bigr\}
\end{equation*}
We will call the $U_{ab}$ and $\alpha_{ab}$ the simple roots (of $P$).
Because the twenty classes $U_{ab}$ and $\alpha_{ab}$ have norm~$-2$, and 
their pairwise 
inner products are $0$, $1$ or~$2$ by \eqref{EqUdotAlpha}--\eqref{EqAlphaDotAlpha}, 
$P$ is a Coxeter polytope.
That is, it is a
fundamental domain for the Coxeter group generated
by the reflections across its facets.  

%\emph{Caution:} we will not be concerned with this Coxeter group but rather a 
%``twist'' of a subgroup of it.  

The Coxeter diagram $D$ of $P$ is rather complicated, but can be described as follows.
We follow the conventions that an ordinary edge means inner product~$1$
(indicating a dihedral angle $\pi/3$),
a double edge means inner product~$2$ (indicating parallelism at a point of $\partial H^9$), and the
absence of an edge means orthogonality.
The facets corresponding to the
$U_{ab}$ form a copy of the Petersen graph (Figure~\ref{petersen}).  The 
facets corresponding to the $\alpha_{ab}$
form a copy of the ``anti-Petersen'' graph, meaning that $\alpha_{ab}$ and $\alpha_{cd}$
are joined just if $U_{ab}$ and $U_{cd}$ are not.  Finally,
each $U_{ab}$ is joined to $\alpha_{ab}$ by a double edge.  
A drawing of this graph, due to Kondo \cite{Kondo}, appears in Figure~\ref{skondo6}.
\begin{figure}[ht]
\begin{center}
\includegraphics[scale=.4]{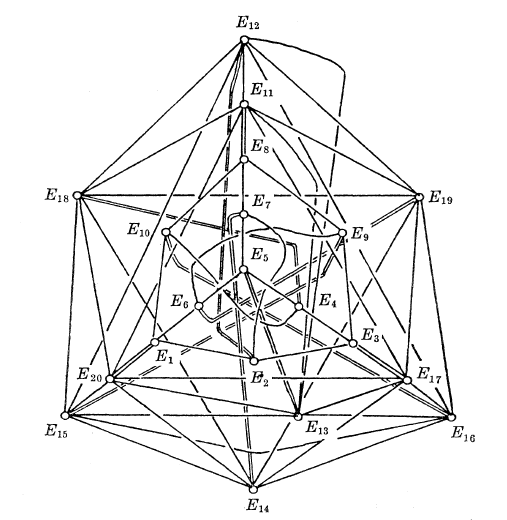}
\end{center}
\begin{center}
  \begin{tabular}{*{20}{c}}
$U_{01}$&$U_{02}$&$U_{03}$&$U_{04}$&$U_{12}$&$U_{13}$&$U_{14}$&$U_{23}$&$U_{24}$&$U_{34}$\\
$E_1$&$E_8$&$E_4$&$E_7$&$E_5$&$E_9$&$E_{10}$&$E_2$&$E_3$&$E_6$\\
\noalign{\medskip}
$\alpha_{01}$&$\alpha_{02}$&$\alpha_{03}$&$\alpha_{04}$&$\alpha_{12}$&$\alpha_{13}$&$\alpha_{14}$&$\alpha_{23}$&$\alpha_{24}$&$\alpha_{34}$\\
$E_{20}$&$E_{11}$&$E_{18}$&$E_{14}$&$E_{13}$&$E_{15}$&$E_{16}$&$E_{12}$&$E_{17}$&$E_{19}$\\
  \end{tabular}
\end{center}
\caption{Kondo's drawing of the Coxeter diagram of the polytope $P$, and how his labels correspond to ours.}
\label{skondo6}
\end{figure}
% Igor's original TeX for the correspondence
%$$(U_{01},U_{02},U_{03},U_{04},U_{12},U_{13},U_{14},U_{23},U_{24},U_{34})$$
%$$= (E_1,E_8,E_4,E_7,E_5,E_9,E_{10},E_2,E_3,E_6),$$
%$$(\alpha_{01},\alpha_{02},\alpha_{03},\alpha_{04},\alpha_{12},\alpha_{13},\alpha_{14},\alpha_{23},\alpha_{24},\alpha_{34})$$
%$$=(E_{20},E_{11},E_{18},E_{14},E_{13},E_{15},E_{16},E_{12},E_{17},E_{19}),$$

Using a criterion of Vinberg (Proposition~1 and Section~2.4 of \cite{Vinberg})
one can show that the  image of $P$
in real projective
space is a hyperbolic polytope with finite volume. 
We will pass freely between
$P$ and this hyperbolic polytope.
The center of $P$ is represented by $\Delta=\sum_{\{a,b\}}U_{ab}$,
which has inner product $1$ with each $U_{ab}$, inner product $2$ with each $\alpha_{ab}$,
and inner product~$10$ with itself.  

\begin{remark} 
  \label{RkPolytopeRemarks}
  $P$ is the nef cone of the Enriques surface $S$ of Type 
  VI in Kondo's classification of complex Enriques surfaces with finite automorphism group
  \cite{Kondo}.
  Its facets correspond to $(-2)$-curves on $S$, and Figure~\ref{skondo6} is the dual graph
  of these curves.  
  The finiteness of the hyperbolic volume of $P$ turns out to be
  essentially the same fact as the finiteness of $\Aut(S)$.
  This is because an Enriques surface has finite automorphism group if and only if it contains a 
  finite set of $(-2)$-curves which describe a finite-volume hyperbolic polytope.
  This particular surface
  $S$ has Hessian type, coming from the construction of Section~\ref{SecBackground}
  applied to the Clebsch diagonal surface.  That is, with $(\lambda_0,\ldots,\lambda_4) = (1,1,1,1,1)$.

  The polytope~$P$ also occurs in Shimada's paper \cite{Shimada}, where his
  Theorem~1.8 shows that $P$ is the union of 
  $2^{14}\cdot 3\cdot 5\cdot 7\cdot 17\cdot 31$ 
  fundamental chambers for the full reflection group $W_{237}$ of the lattice $\Lambda\iso E_{10}$.

For any Enriques surface $S$ of Sylvester-nondegenerate Hessian type, 
the class $\Delta\in\Pic(S)$ represents an ample \emph{Fano polarization} on $S$ that realizes the surface as a surface of degree $10$ in $\bbP^5$, the smallest possible projective embedding of an Enriques surface. Each such ample polarization taken with multiplicity $3$ is equal to the sum 
of ten isotropic nef divisors (in our case, the $f_{ab}$), all of whose pairwise 
intersection numbers are~$1$. Such polarization (maybe quasi-ample instead of ample) exists on any Enriques surface. In our case,
the associated projective
embedding sends each $U_{ab}$ to a line and any divisor representing $f_{\alpha}$ to 
a plane cubic curve.
\end{remark}
 
  \begin{thm}\label{Thmnefcone}  
  Let $G_0$ be the subgroup of
  $\Aut(\Lambda)$ generated by those $g_{cd}$ for which $\lambda_c\neq\lambda_d$.
  Then $\Nef_\R(S)\cap\Lambda_\R$ is the closure $Q$ of the union of the $G_0$-images of
  the polytope~$P$.  Furthemore, the facets of $Q$ correspond to the $G_0$-images of
  the $U_{ab}$ and of those $\alpha_{ab}$ for which $\lambda_a=\lambda_b$.
  
\end{thm}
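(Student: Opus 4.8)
The plan is to prove the two inclusions $Q\subseteq N$ and $N\subseteq Q$ separately, where I write $N:=\Nef_\R(S)\cap\Lambda_\R$, and then read off the facet description from the second step. Throughout I use that $N$ is a closed convex cone contained in $\overline{V^+}\cap\Lambda_\R$, where $V^+$ is the component of $\{x\in\Lambda_\R:x^2>0\}$ containing the ample class $\Delta$; that $\Aut(S)$ preserves $N$; that a class in its interior is exactly an ample class; and that each supporting wall of $N$ has the form $C^\perp$ for an effective class $C$ with $N\subseteq\{x:x\cdot C\ge0\}$. First I classify the twenty simple roots. Each $U_{ab}$ is a $(-2)$-curve, and for $\lambda_a=\lambda_b$ Lemma~\ref{lem:alphaab} represents $\alpha_{ab}$ either by a $(-2)$-curve or, in the Coble case, by $2D+\frac12C_i+\frac12C_j$ with $D$ a $(-1)$-curve; in all cases $x\cdot U_{ab}\ge0$ and $x\cdot\alpha_{ab}\ge0$ hold on $N$ (in the Coble case because $x\cdot\alpha_{ab}=2\,x\cdot D$ for $x\in\Lambda$). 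I call these the \emph{geometric} roots. By contrast, when $\lambda_a\neq\lambda_b$ one has $g_{ab}(\alpha_{ab})=t_{ab}r_{\alpha_{ab}}(\alpha_{ab})=-\alpha_{ab}$, and since $g_{ab}\in\Aut(S)$ preserves effectivity, neither $\alpha_{ab}$ nor $-\alpha_{ab}$ is effective; hence $\alpha_{ab}^\perp$ is not a supporting wall of $N$, and $g_{ab}$ interchanges its two sides within $N$. I call these the \emph{extra} roots.

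For $Q\subseteq N$ I would first show $P\subseteq N$. The interior of $P$ contains the ample Fano polarization $\Delta$ (Remark~\ref{RkPolytopeRemarks}), every geometric facet of $P$ is a nef wall, and the remaining (extra) facets only cut $P$ down further; what must be checked is that no other effective $(-2)$-curve has its wall meeting $\text{int}(P)$. Granting $P\subseteq N$, each $g\in G_0\subseteq\Aut(S)$ preserves $N$, so $gP\subseteq N$; as $N$ is closed, $Q=\overline{\bigcup_{g\in G_0}gP}\subseteq N$.

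The inclusion $N\subseteq Q$ is the heart of the matter, and I would prove it by a wall-crossing argument. Let $W_P$ be the Coxeter group generated by the reflections in the twenty simple roots, so that the chambers $wP$ tile $\overline{V^+}\cap\Lambda_\R$. The key local computation is that, for $\lambda_a\neq\lambda_b$, one has $g_{ab}=r_{\alpha_{ab}}t_{ab}$ (since $t_{ab}$ fixes $\alpha_{ab}$, so $t_{ab}$ and $r_{\alpha_{ab}}$ commute) and $t_{ab}$ permutes the simple roots, hence stabilizes $P$; therefore $g_{ab}P=r_{\alpha_{ab}}P$ is exactly the $W_P$-chamber adjacent to $P$ across the extra wall $\alpha_{ab}^\perp$. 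Consequently, from any $gP$ with $g\in G_0$, crossing its extra facet $g(\alpha_{ab}^\perp)$ lands in $g\,g_{ab}P$, again a $G_0$-translate of $P$. Now take $x\in\text{int}(N)$ and join it to $\Delta$ by a generic path inside the convex open set $\text{int}(N)$, crossing walls transversally one at a time. The path begins in $P\subseteq Q$; whenever it exits the current chamber $gP$ it crosses a facet, which is either geometric or extra. A geometric facet is the wall of an effective class of negative self-intersection (an $\Aut(S)$-image of a $U_{ab}$ or of a geometric $\alpha_{ab}$), which the path cannot cross while staying in $\text{int}(N)$; hence every crossing is through an extra facet, and the path passes into another $G_0$-translate of $P$. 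Thus $x\in\bigcup_{g\in G_0}gP\subseteq Q$, so $\text{int}(N)\subseteq Q$, and since $Q$ is closed, $N=Q$. The facet description drops out at once: the boundary walls of $Q$ are precisely the walls forbidden to cross, namely the $G_0$-images of the geometric roots, i.e. the $U_{ab}$ and the $\alpha_{ab}$ with $\lambda_a=\lambda_b$.

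The step I expect to be the main obstacle is the claim $P\subseteq N$, specifically ruling out that some effective $(-2)$-curve not among the twenty simple roots has its wall slicing through $\text{int}(P)$; the honest justification combines the ampleness of $\Delta$ (which is strictly positive on every $(-2)$-curve) with Vinberg's finite-covolume analysis, already invoked to know that $P$ is a genuine Coxeter chamber whose interior meets no mirror of the root system beyond its own facets. A secondary technical point is the bookkeeping in the wall-crossing argument: that the $G_0$-translates of $P$ are distinct $W_P$-chambers, that the tiling is locally finite along a compact path, and that a generic path avoids codimension-two faces. Once these are in place, the convexity of $Q$—equivalently $N\subseteq Q$—is automatic, since $Q$ is thereby exhibited as the connected component of $\Delta$ in the complement of the effective walls.
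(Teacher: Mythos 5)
Your architecture is sound in the direction $N\subseteq Q$, but the direction $Q\subseteq N$ contains a genuine gap, sitting exactly where you flagged it. To get $P\subseteq N$ you propose to rule out effective walls through $\mathrm{int}(P)$ by asserting that Vinberg's finite-covolume analysis shows the interior of $P$ ``meets no mirror of the root system beyond its own facets.'' That assertion is false: Vinberg's criterion only establishes that $P$ has finite volume and is a fundamental chamber for the group generated by the reflections in its \emph{own} twenty walls; it says nothing about the mirrors of other $(-2)$-vectors of $\Lambda$. Indeed, Remark~\ref{RkPolytopeRemarks} quotes Shimada's theorem that $P$ is a union of $2^{14}\cdot 3\cdot 5\cdot 7\cdot 17\cdot 31$ chambers of the full reflection group $W_{237}$ of $E_{10}\iso\Lambda$, so the interior of $P$ is crossed by a huge number of $(-2)$-mirrors. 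What is actually needed is that none of the corresponding classes is (rationally) effective, and nothing in your proposal delivers this.

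The paper closes precisely this gap by a mechanism your proposal is missing. Supposing the inclusion $\Nef_\R(S)\cap\Lambda_\R\subseteq Q$ (which it proves first, essentially by your wall-crossing observation that $g_{cd}(P)=r_{\alpha_{cd}}(P)$, so $\partial Q$ consists of $G_0$-images of effective walls) were strict, it takes an effective divisor $B$ with $B^2<0$ whose wall meets $\mathrm{int}(Q)$, projects its class to $\beta\in\Lambda$, and replaces $B$ by the $G_0$-image minimizing $\beta\cdot\Delta$. Minimality forces $\beta\cdot\alpha_{cd}\geq0$ for the extra roots (a reflection in $\alpha_{cd}$ would change $\beta\cdot\Delta$ by $2\beta\cdot\alpha_{cd}$), while effectivity of the geometric roots gives $\beta\cdot U_{ab}\geq0$ and $\beta\cdot\alpha_{ab}\geq0$ when $\lambda_a=\lambda_b$; hence $\beta\in P$. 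Finite volume is then used not to exclude mirrors but through its conical consequence: a finite-volume hyperbolic polytope, regarded as a cone in $\Lambda_\bbR$, contains only vectors of norm~$\geq0$, contradicting $\beta^2<0$ (forced because $\beta^\perp$ meets hyperbolic space). This argument never needs to know which mirrors cross $\mathrm{int}(P)$. Your correct and useful ingredients --- the adjacency $g_{ab}P=r_{\alpha_{ab}}P$ from Lemma~\ref{L2.3}, the non-effectivity of the extra roots via $g_{ab}(\alpha_{ab})=-\alpha_{ab}$, and the Coble-case computation $x\cdot\alpha_{ab}=2\,x\cdot D$ for $x\in\Lambda$ via Lemma~\ref{lem:alphaab} --- all match the paper (one minor improvement: in place of the ampleness of $\Delta$ asserted in Remark~\ref{RkPolytopeRemarks}, the nefness of $\Delta=\sum U_{ab}$, which is immediate since $\Delta$ meets every irreducible curve nonnegatively, suffices for your base point). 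But without the minimization-plus-finite-volume argument, or an equivalent substitute, the inclusion $Q\subseteq\Nef_\R(S)\cap\Lambda_\R$ --- and with it the theorem --- remains unproven.
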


\begin{proof}
  Write $Q$ for the closure of $\cup_{g\in G_0}\,g(P)$. (The only difference between the
    union and its closure is
  the limit set of $G_0$ in $\partial H^9$, which plays no role in our considerations.)
  The $U_{ab}$ are effective divisors, and the $\alpha_{ab}$ with $\lambda_a=\lambda_b$
  are rationally effective by Lemma~\ref{lem:alphaab}.  We call the corresponding facets of $P$ 
  the exterior facets.
  The remaining facets of $P$ correspond to the $\alpha_{cd}$ with $\lambda_c\neq\lambda_d$.
  Lemma~\ref{L2.3}
  shows that this facet is also a facet of $g_{cd}(P)$, which lies on the other side
  of the facet.
    It follows that the boundary of $Q$ in $H^9$ consists of 
    the $G_0$-images of the exterior facets of $P$.  This proves the last claim of the theorem,
    and shows that every facet of $Q$ is 
  orthogonal to a rationally effective divisor.  This implies $\Nef_\R(S)\cap\Lambda_\R\subseteq Q$.

  Suppose that the inclusion $\Nef_\R(S)\cap\Lambda_\R\subseteq Q$ were strict.
 Recall that 
 $\Nef_\bbR(S)$ is the intersection of the half-spaces $x\cdot B\geq0$ in $\Num_\bbR(S)$,
 where $B$ varies over
 the effective divisors with negative self-intersection.  
 
 So  $S$ has some such divisor $B$,
  whose orthogonal complement in $\Num_\R(S)$ meets the interior of $Q$.  
  Write $\beta$ for the projection of its class in $\Num(S)$ to $\Lambda$. 
  Without loss of generality we 
  replace $B$ by its  $G_0$-image having smallest possible $\beta\cdot\Delta$.
  This corresponds to  the hyperplane $\beta^\perp$ coming as close as possible 
  to $\Delta$ in hyperbolic space.  
  %By this choice, we know that no  $G_0$-image of $\Delta$
  %is closer to $\beta^\perp$ than $\Delta$ is.
  We claim that $\beta\cdot\alpha_{cd}\geq0$ when $\lambda_c\neq\lambda_d$.  
  Otherwise, the image $\beta+(\beta\cdot\alpha_{cd})\alpha_{cd}$ 
  of $\beta$ under the reflection in $\alpha_{cd}$ would have inner product 
  $\beta\cdot\Delta+2\beta\cdot\alpha_{ab} < \beta\cdot\Delta$ with $\Delta$.
  %
  %$(s_{\alpha_{cd}}(\beta),\Delta) = (\beta+(\beta,\alpha_{cd})\alpha_{cd},\Delta) = (\beta,\Delta)+2(\beta,\alpha_{ab}) < (\beta,\Delta)$.\marginpar{\ID{added explanation}}  Therefore, using Lemma \ref{L2.3}, we obtain that $g_{cd}\in G_0$ would also do this, which contradicts the choice of~$B$.

  Because $U_{ab}\in\Lambda$ we have $\beta\cdot U_{ab}=B\cdot U_{ab}\geq0$ for all $a,b$.  
When $\lambda_a=\lambda_b$, 
the same argument gives $\beta\cdot\alpha_{ab}=B\cdot\alpha_{ab}\geq0$
because $\alpha_{ab}$ is rationally effective.
  We have shown that $\beta$ has nonnegative inner products with the simple roots of $P$.
  That is, it lies in $P$.  Regarded as a hyperbolic polytope, $P$ has finite volume.
  Therefore, regarded as a cone in $\Lambda_\bbR$, it consists of norm${ }\geq0$ vectors.  This contradicts
  the hypothesis that $\beta^\perp$ meets the interior of $Q$ (or indeed any point of hyperbolic space).
\end{proof}

  \section{Automorphism groups}
\label{SecAuts}

  Our goal in this section is
to work out the automorphism groups of the Enriques and Coble surfaces $S$ constructed 
in Section~\ref{SecBackground}, for the parameters 
$(\lambda_0,\dots,\lambda_4)=(1,1,1,1,t)$, with $t\neq0$ as always.  
By Lemma~\ref{LemFaithful}, $\Aut(S)$ acts faithfully on the lattice $\Lambda\sset\Num_\Q(S)$.
And Theorem~\ref{Thmnefcone} describes the invariant cone $Q=\Nef_\R(S)\cap\Lambda_\R$
fairly explicitly.  The idea in this section is to use the shape of this cone 
to show that $\Aut(S)$ is generated by the known automorphisms~$g_{ab}$.

In the particular case $t=1$, the cubic surface is the Clebsch diagonal cubic surface.
It is smooth except in characteristic~$5$, in which case it has an ordinary node
at $(1:1:1:1:1)$.  Regardless of whether there is a node,
Theorem~\ref{Thmnefcone} shows that $Q=P$.  The isometry group of $P$ is the automorphism group of
its Coxeter diagram, which is just the obvious group $\frakS_5$
arising from permuting the pentahedral coordinates $y_0,\dots,y_4$.
So $\Aut(S)=\frakS_5$.  
Over $\bbC$, this Enriques surface has type VI in Kondo's classification of complex Enriques
  surfaces with finite automorphism group;  see  Remark~\ref{RkPolytopeRemarks}.

%Although it does not affect the analysis, 
We remark on the other special surfaces
in the family.
If  
$t\notin\{\frac1{16},\frac14\}$ then the Hessian surface $H$ is smooth away from the
vertices of the Sylvester pentahedron.  In this case the Cremona involution
$\sigma$ acts freely on $X$, so $S$ is
an Enriques surface.  
When $t=\frac1{16}$ there is one new node,
at $(1,1,1,1,-4)$.  When $t=\frac14$ there are four new nodes, at the images of
$(1,1,1,-1,-2)$ under the permutations of the first four coordinates. 

%We write $Q$ for $\Nef(S)_\R)\cap \Lambda_\bbR$.\marginpar{changed notation}  We refer to \cite{DolgKondo}, Chapter 10 for the following. \marginpar{I can supply the proof but it is rather long}

%\begin{lem}\label{dolgkondo} A divisor class $x\in \Nef(S)_\R)\cap \Lambda_\bbR$  if and only $x\cdot r \ge 0$ for any irreducible effective root. 
%\end{lem}

% As follows from Lemma \ref{dolgkondo}, 
%$Q$ is completely described by 
%the inequalities
%$x\cdot r\geq0$ for all irreducible effective roots. We know from Remark \ref{rmk} that not all of these roots are defined by $(-2)$-curves if one of the planes contains a new singular point.
%Thus $Q$ is also a Coxeter polytope (but may have infinitely many facets) that contains $P$.
We will need to
understand the cusps of the finite-volume hyperbolic
polytope $P$ introduced in Section~\ref{SecNef}. 
They correspond to parabolic subdiagrams of rank $8$ of the Coxeter diagram $D$ of $P$, and hence 
to elliptic pencils on Kondo's surface mentioned above. 
By inspection of the graph (or referring to Table 2 on page 274 in \cite{Kondo}), we obtain the following.   

\begin{lem}
  \label{LemCusps}
  There are four $\frakS_5$-orbits of cusps of $P$, corresponding to
subdiagrams $\Atilde_5\Atilde_1\Atilde_2$, $\Etilde_6\Atilde_2$, $\Dtilde_5\Atilde_3$ 
and $\Atilde_4\Atilde_4$ of $D$. 
In each case the  first  listed component
lies in the Petersen graph rather than the anti-Petersen graph, hence corresponds to
some set of divisors $U_{ab}$.
\qed
\end{lem}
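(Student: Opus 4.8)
The statement is purely combinatorial: the preceding discussion already identifies the cusps of $P$ with the rank-$8$ parabolic subdiagrams of $D$ (an affine diagram with $N$ nodes and $c$ connected components having rank $N-c$), so the plan is to enumerate, up to the $\frakS_5$-action, all induced subdiagrams of $D$ that are disjoint unions of affine diagrams of total rank $8$, and then to read off where each component sits. The enabling observation is that $D$ has a clean three-part structure: the Petersen graph on the $U_{ab}$, the anti-Petersen (triangular) graph $T(5)$ on the $\alpha_{ab}$, and the ten double edges joining each $U_{ab}$ to $\alpha_{ab}$. These double edges carry the only inner products equal to $2$, hence are the only possible $\Atilde_1$ components; every other connected affine diagram ($\Atilde_n$ with $n\geq2$, $\Dtilde_n$, $\Etilde_n$) is built from single edges only, and all single edges of $D$ lie within one of the two halves. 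Consequently every such component lies entirely among the $U_{ab}$ or entirely among the $\alpha_{ab}$, and each rank-$8$ parabolic subdiagram is an assembly of single-edge affine pieces inside the Petersen graph, single-edge affine pieces inside $T(5)$, and bridge $\Atilde_1$'s.

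I would next use the facts that the Petersen graph is $3$-regular of girth $5$ while $T(5)$ is $6$-regular of girth $3$. Girth $5$ forbids induced triangles and squares in the Petersen graph, so every $\Atilde_2$ and every $\Atilde_3$ must lie in $T(5)$; it also forbids the degree-$4$ center of $\Dtilde_4$. Because the only edges crossing between the halves are the bridges, the induced-disjoint-union condition becomes a condition on pair-labels: components in opposite halves must use disjoint sets of pairs (otherwise a shared pair gives a bridge joining them), and a bridge $\Atilde_1$ on a pair $p$ is admissible only when $U_p$ is non-adjacent to every $U$-component and $\alpha_p$ is non-adjacent to every $\alpha$-component. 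Assembling to total rank $8$, the large companion of each small piece is forced into the Petersen graph: placing $\Etilde_6$, $\Dtilde_5$, or an $\Atilde_5$ alongside its triangle or square in $T(5)$ would require two non-adjacent induced subgraphs to (nearly) exhaust the connected $6$-regular graph $T(5)$, which is impossible; and the two pentagons of $\Atilde_4\Atilde_4$ cannot both lie in one half without disconnecting it. This establishes the placement claim that the first-listed component always lies in the Petersen graph.

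It then remains to see that exactly the four listed types occur and that each is a single $\frakS_5$-orbit. Each type is essentially pinned down by its Petersen component together with its complementary pairs; for example, an induced Petersen hexagon uses six pairs, and I would compute that the four remaining pairs always induce in $T(5)$ a triangle plus one isolated vertex, which assembles uniquely into $\Atilde_5\Atilde_1\Atilde_2$ (the triangle giving $\Atilde_2$, the isolated pair giving the bridge) and never into a $4$-cycle. This last point is precisely what excludes the rank-$8$ impostor $\Atilde_5\Atilde_3$. Transitivity of $\frakS_5$ on the relevant Petersen configurations (its induced pentagons, hexagons, $\Dtilde_5$'s and $\Etilde_6$'s) then yields one orbit per type. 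The main obstacle is completeness of the enumeration: one must be certain that no further assembly survives, checking off $\Etilde_8$, $\Etilde_7\Atilde_1$, $\Atilde_7\Atilde_1$, $\Atilde_6\Atilde_2$, $\Dtilde_6\Atilde_2$, $\Atilde_5\Atilde_3$, and the like. This is a finite but delicate case analysis, made tractable by the half-splitting above; as an independent check, the four diagrams match Kondo's Table~2 (p.~274 of \cite{Kondo}) listing the genus-one pencils of the type~VI surface.
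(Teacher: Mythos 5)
Your proposal is correct and takes essentially the same approach as the paper, which in fact offers no written argument at all beyond ``By inspection of the graph (or referring to Table 2 on page 274 in \cite{Kondo})'': your decomposition of $D$ into the Petersen half, the anti-Petersen half $T(5)$, and the ten double edges (the only possible $\Atilde_1$ components) is precisely a careful organization of that inspection, ending with the same fallback appeal to Kondo's table of genus-one pencils. The only quibbles are cosmetic --- for instance it is $3$-regularity, not girth~$5$, that excludes the degree-$4$ center of $\Dtilde_4$ from the Petersen graph (and the independence number~$2$ of $T(5)$ gives a cleaner reason why no component with a degree-$3$ node, nor any cycle longer than a pentagon, fits in the anti-Petersen half) --- and your admittedly unfinished exhaustion of the remaining rank-$8$ combinations is still more detail than the paper itself supplies.
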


\begin{thm} 
  \label{ThmAutGenerators}
  Let $S$ be the Enriques or Coble surface arising 
  from the Hessian of a Sylvester non-degenerate cubic surface
  with parameters 
  \begin{equation*}
    (\lambda_0,\dots,\lambda_4)=(1,1,1,1,t)
  \end{equation*}
  Then $\Aut(S)$ is
  generated by the ten involutions $g_{ab}$.
\end{thm}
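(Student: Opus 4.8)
The plan is to exploit the faithful action of $\Aut(S)$ on $\Lambda\iso E_{10}$ (Lemma~\ref{LemFaithful}) together with the explicit description of the nef cone from Theorem~\ref{Thmnefcone}. Write $\Gamma=\gend{g_{ab}}\sset\Aut(S)$ for the subgroup generated by the ten involutions; since the reverse inclusion is trivial, it suffices to prove $\Aut(S)\sset\Gamma$. Every $\phi\in\Aut(S)$ preserves the nef cone $Q=\Nef_\R(S)\cap\Lambda_\R$, and by Theorem~\ref{Thmnefcone} this cone is tiled by the $G_0$-translates of the Coxeter polytope~$P$, where $G_0=\gend{g_{a4}:a\le3}\sset\Gamma$ (for $t=1$ the family degenerates, $G_0$ is trivial and $Q=P$; this case is handled separately below). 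The proof then reduces to two assertions: (a) $\Aut(S)$ permutes the chambers $\{gP:g\in G_0\}$ of this tiling, and (b) the stabilizer $\Stab_{\Aut(S)}(P)$ is contained in~$\Gamma$. Granting these, any $\phi$ sends $P$ to some chamber $gP$ with $g\in G_0\sset\Gamma$, so $g^{-1}\phi\in\Stab_{\Aut(S)}(P)\sset\Gamma$ and hence $\phi\in\Gamma$.

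For (b) I would argue as follows. An element of $\Stab_{\Aut(S)}(P)$ is a symmetry of the Coxeter polytope $P$ and so induces an automorphism of its diagram (Figure~\ref{skondo6}), whose symmetry group is the group $\frakS_5$ permuting the pentahedral indices $0,\dots,4$. Such a symmetry must respect the partition of the facets of $P$ into those lying on $\partial Q$ (the $U_{ab}$, together with the rationally effective $\alpha_{ab}$ having $\lambda_a=\lambda_b$, which here are the six $\alpha_{ab}$ with $a,b\le 3$) and the interior facets (the $\alpha_{c4}$). Since the interior facets are exactly the $\alpha$-facets whose index pair contains $4$, a diagram symmetry preserving this partition must fix the index~$4$, hence lies in $\frakS_4=\gend{t_{ab}:a,b\le3}$. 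As each $t_{ab}$ with $a,b\le3$ is realized by $g_{ab}\in\Gamma$ and $\Aut(S)$ acts faithfully on $\Lambda$, this forces $\Stab_{\Aut(S)}(P)\sset\Gamma$. In the degenerate case $t=1$ we have $Q=P$, every $\alpha_{ab}$ is exterior, the diagram symmetry group is the full $\frakS_5=\gend{g_{ab}}$, and so $\Aut(S)=\Stab_{\Aut(S)}(P)=\frakS_5=\Gamma$.

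The hard part is (a): showing that the chamber decomposition of $Q$ is intrinsic, i.e.\ preserved by all of $\Aut(S)$ and not merely by $\Gamma$. Note that $\phi(P)$ is a priori only an $\Aut(S)$-translate of $P$, and it is a tile of the $G_0$-tiling precisely when $\phi\in G_0\cdot\Stab_{\Aut(S)}(P)$, which is essentially what we are trying to establish; so this step cannot be bypassed. My plan is to characterize the chamber centers numerically. The center of $P$ is the Fano polarization $\Delta=\sum_{\{a,b\}}U_{ab}$, which by Remark~\ref{RkPolytopeRemarks} is an ample class with $\Delta^2=10$ admitting a decomposition $3\Delta=\sum f_{ab}$ into ten isotropic nef classes of pairwise intersection~$1$; this property is manifestly preserved by $\Aut(S)$. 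I would then show that each chamber $gP$ contains exactly one class with this property, namely its center $g\Delta$, so that $\Aut(S)$ permutes the chamber centers and hence the chambers themselves. The uniqueness of the Fano polarization inside a single chamber is the crux: it amounts to proving that the only way to write $3\Delta$ as such a sum of ten isotropic nef classes uses the $f_{ab}$, which I would deduce from the cusp classification of Lemma~\ref{LemCusps}, noting that the $f_{ab}$ are precisely the isotropic classes attached to the ten cusps of type $\Atilde_5\Atilde_1\Atilde_2$. This combinatorial uniqueness is the step I expect to require the most care.
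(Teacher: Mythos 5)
Your high-level skeleton coincides with the paper's: the paper likewise reduces the theorem to showing that the tessellation of $Q=\Nef_\R(S)\cap\Lambda_\R$ by $G_0$-translates of $P$ is intrinsic, and then analyzes the stabilizer of $P$; your step (b) (a stabilizing automorphism induces a diagram symmetry, which must preserve the partition into exterior facets and the interior facets $\alpha_{a4}$, hence fixes the index $4$ and lies in $\frakS_4\sset\Gamma$, and faithfulness from Lemma~\ref{LemFaithful} finishes) is essentially the paper's endgame, and your handling of $t=1$ matches. The genuine gap is in step (a), which you correctly identify as the crux but do not prove, and the route you sketch does not work as stated. The paper proves intrinsicness by cusp bookkeeping: it shows that any $g\in\Aut(S)$ can be composed with elements of $G$ so that $g$ fixes the cusp $\nu_{4,0}$ of type $\Etilde_6\Atilde_2$, using Lemma~\ref{LemCusps} to force the $\Etilde_6$ component of the cusp diagram of $g(\nu_{4,0})$ in $Q$ to consist of exterior walls of $P$, and killing the residual case $g(\nu_{4,0})=\nu_{0,4}$ by the numerical contradiction $\alpha_{01}+\alpha_{02}+\alpha_{03}=\frac12\nu_{4,0}$ versus $(\alpha_{41}+\alpha_{12})+(\alpha_{42}+\alpha_{23})+(\alpha_{43}+\alpha_{31})=\nu_{0,4}$ (the cusp at $\nu_{0,4}$ is ``twice as big''). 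Only after fixing the cusp does the paper characterize $\Delta$ --- relative to that cusp, not via the Fano decomposition --- and recover the $U_{ab}$ as the $(-2)$-classes of degree one against $\Delta$.

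Your substitute mechanism (uniqueness of the Fano polarization per chamber) has two concrete defects beyond being unproven. First, you mis-state the reduction: proving that ``the only way to write $3\Delta$ as such a sum of ten isotropic nef classes uses the $f_{ab}$'' is uniqueness of the \emph{decomposition} of the known center, whereas what you need is uniqueness of the \emph{center}: that no other $x\in P$ admits any decomposition $3x=\sum_{i=1}^{10}e_i$ of the required kind. Second, the tool you cite cannot reach either statement: the classes $e_i$ are only required to be isotropic and nef, i.e., to lie on $\partial Q$, so they range over the cusps of \emph{all} the tiles $gP$, $g\in G_0$ (and a priori over rational isotropic classes in the limit set of $G_0$, which also lie in the closed nef cone). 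Lemma~\ref{LemCusps} classifies the cusps of the single polytope $P$ only, so identifying the $f_{ab}$ with the $\Atilde_5\Atilde_1\Atilde_2$ cusps of $P$ does not control a decomposition whose members sit at cusps of distant tiles; you would need an additional localization argument (e.g., bounding $e_i\cdot x$ and enumerating isotropic nef classes of small degree near a given chamber), which is work of the same order as the paper's cusp analysis. Finally, even granting that the Fano polarizations in $Q$ are exactly the chamber centers $G_0\Delta$, the inference ``permutes centers, hence permutes chambers'' needs one more step --- you cannot yet assert that $\phi(gP)$ is a tile --- namely recovering $P$ intrinsically from $\Delta$, which the paper does via the fact that the $(-2)$-curve classes $r$ with $r\cdot\Delta=1$ are precisely the $U_{ab}$. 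As it stands, the proposal replaces the paper's decisive cusp argument with a plausible but unestablished rigidity claim, so the proof is incomplete at exactly the point where the real content lies.
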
 

\begin{proof}
  We treated the $t=1$ case above, so we suppose $t\neq1$.
Theorem~\ref{Thmnefcone} shows that $Q$ is (the closure of) the union of the $G_0$-translates
of  $P$.  
Here $G_0$ is the subgroup of
$\Aut(S)$ generated by those $g_{cd}$ with $\lambda_c\neq\lambda_d$.
We will also write $G$ for the group generated by all ten $g_{ab}$.
  Suppose $g\in\Aut(S)$. Our strategy is to replace $g$ by
  its compositions with elements of $G$, ultimately leading to the conclusion
  that $g$ is the identity.
  The main idea of the proof is to show that the tessellation of $Q$ by copies of $P$ is
  intrinsic, so that $g$ preserves it.  
  We will do this by comparing the cusps of $Q$ to the cusps of $P$.
  As in the proof of theorem~\ref{Thmnefcone} we use the term ``exterior facets'' 
  for the facets of $P$ corresponding to the
  $U_{ab}$ and to those $\alpha_{ab}$ with $\lambda_a=\lambda_b$.
  We call the other  facets ``interior''.

  The fact that $\lambda_4$ is different from $\lambda_0,\dots,\lambda_3$ breaks
  the $\frakS_5$ 
  symmetry.  Each $\frakS_5$-orbit of cusps of $P$ breaks up into several orbits under the subgroup
  $\frakS_4$
  acting on the indices $\{0,1,2,3\}$.
  We will focus on the cusps of type $\Etilde_6\Atilde_2$.  There is one such cusp for 
  every ordered pair $a,b$ of distinct elements of $\{0,\dots,4\}$.  Namely,
  $\Etilde_6(a,b)$ has $U_{ab}$ as the branch  node, $U_{cd},U_{de},U_{ec}$ as its neighbors,
  and $U_{ae},U_{ac},U_{ad}$ as the end nodes.  
  Here $\{c,d,e\}=\{0,\dots,4\}-\{a,b\}$. The nodes of $D$  not joined to $\Etilde_6(a,b)$
  are $\alpha_{bc},\alpha_{bd},\alpha_{be}$, which form an $\Atilde_2$ diagram.  The corresponding
  facets of $P$ can be either exterior or interior.  Under
  $\frakS_4$ there are three orbits of  $\Etilde_6\Atilde_2$ diagrams, namely
  \begin{center}
    \begin{tabular}{ccc}
      &exterior $\Atilde_2$ facets&interior $\Atilde_2$ facets
      \\
      $\Etilde_6(4,0)$&$\alpha_{01},\alpha_{02},\alpha_{03}$&none
      \\
      $\Etilde_6(0,1)$&$\alpha_{12},\alpha_{13}$&$\alpha_{14}$
      \\
      $\Etilde_6(0,4)$&none&$\alpha_{14},\alpha_{24},\alpha_{34}$
    \end{tabular}
  \end{center}
 
  It is easy to write down a vector in $\Lambda$ representing the cusp
  corresponding to a given $\Etilde_6(a,b)$,  namely the null vector
  \begin{equation}
    \label{EqCusp}
    \nu_{a,b}=
    3 U_{ab} + 2(U_{cd}+U_{de}+U_{ec})+(U_{ae}+U_{ac}+U_{ad})
  \end{equation}

  Set $\nu=\nu_{4,0}$.  
  Since all the facets of $P$
  incident to $\nu$ are exterior facets, $\nu$ is also a cusp of $Q$.  
  Since $g(\nu)$ is a cusp of $Q$, it is a cusp
  of some $G_0$-translate of $P$.  By replacing $g$ by its composition with a suitable element
  of $G_0$, we may therefore suppose without loss that $g(\nu)$ is a cusp of $P$.  Some of the
  facets of $g(\nu)$ as a cusp of $Q$ might not be facets of $P$, because $Q$ might
  contain several translates of $P$ that are incident to $g(\nu)$.
  Nevertheless,
  every exterior facet of $P$ that contains $g(\nu)$ will also be a facet of $Q$.  Lemma~\ref{LemCusps}
  shows that  these facets of $P$ account for
  a subdiagram $\Atilde_5$, $\Etilde_6$, $\Dtilde_5$ or $\Atilde_4$ of $g(\nu)$'s diagram in $Q$.
  The cases $\Atilde_5$, $\Dtilde_5$ and $\Atilde_4$ are incompatible with the fact that
  the diagram of $\nu$ in $Q$ is $\Etilde_6\Atilde_2$.
  It follows that the $\Etilde_6$ component, of the diagram of $\nu$ 
  as a cusp of $Q$, consists of exterior walls
  of $P$.  By replacing $g$ by its composition with some element of $\frakS_4\sset G$,
  we may suppose without loss that $g(\nu)$ is one of the cusps
  $\nu_{4,0}$, $\nu_{0,1}$ or $\nu_{0,4}$ of~$P$.

  We claim that the first of these three cases holds, which is to say that $g$ fixes the
  cusp $\nu_{4,0}$.  
  One can check that $g_{41}$ exchanges the other two cusps, by using the explicit
  formula \eqref{EqCusp} for vectors representing them.  
  So in the case $g(\nu)=\nu_{0,1}$ we may replace $g$ by its composition with $g_{41}$,
  reducing to the case that $g(\nu)=\nu_{0,4}$. In this case we will derive a contradiction.
  The facets $\alpha_{41},\alpha_{42},\alpha_{43}$ of $P$ are incident to $\nu_{0,4}$, but they
  are interior facets.  Therefore $Q$ also contains the images of $P$ under $g_{41},g_{42},g_{43}$.
  We will focus on $g_{41}(P)$.  
  We already noted that $\alpha_{12}$ is an exterior facet of $P$ incident to $\nu_{0,1}$,
  and that  $g_{41}$ sends 
  $\nu_{0,1}$ to $\nu_{4,0}$.  Therefore $g_{41}(\alpha_{12})=\alpha_{41}+\alpha_{12}$
  is a facet of $Q$ incident to $\nu_{0,4}$.  
  Since $\nu_{0,4}$ is
  invariant under permutations of the indices $1,2,3$, the remaining two facets of $Q$
  at $\nu_{0,4}$ are $\alpha_{42}+\alpha_{23}$ and $\alpha_{43}+\alpha_{31}$.
  
  In summary, the facets of $Q$ at $\nu_{4,0}$ are $\Etilde_6(4,0)$
  and $\alpha_{01},\alpha_{02},\alpha_{03}$, while the facets of $Q$ at $\nu_{0,4}$
  are $\Etilde_6(0,4)$ and $\alpha_{41}+\alpha_{12},
  \alpha_{42}+\alpha_{23}, \alpha_{43}+\alpha_{31}$.
  Under our assumption
  $g(\nu_{4,0})=\nu_{0,4}$, we see that $g$ sends $\Etilde_6(0,4)$ to $\Etilde_6(4,0)$ 
  and $\{\alpha_{01},\alpha_{02},\alpha_{03}\}$ to
  $\{\alpha_{41}+\alpha_{12},
  \alpha_{42}+\alpha_{23}, \alpha_{43}+\alpha_{31}\}$.
  This contradicts
  \begin{align*}
    \alpha_{01}+\alpha_{02}+\alpha_{03}&
    \textstyle
    { }=\frac12\nu_{4,0}
    \\
    %\noalign{is incompatible with}
  (\alpha_{41}+\alpha_{12})+
  (\alpha_{42}+\alpha_{23})+ (\alpha_{43}+\alpha_{31})&{ }=
  \textstyle\phantom{\frac12}
  \nu_{0,4}
  \end{align*}
  (The fact that the right side is twice as large in the second line
     is the numerical manifestation of the idea
    that the cusp
  of $Q$ at $\nu_{0,4}$ is ``twice as big'' as the cusp of $Q$ at $\nu_{4,0}$.  One should
visualize an equilateral triangle of edge length $2$, divided into four equilateral 
triangles of edge length~$1$.)
  
  We have reduced to the case that $g$ fixes $\nu_{4,0}$.
  The center of the polytope $P$ is represented by $\Delta= \sum_{\{a,b\}}U_{ab}$.
  It has inner product $1$ with each $U_{ab}$, inner product $2$ with each $\alpha_{ab}$,
  and inner product~$10$ with itself. We may characterize it in terms of the cusp $\nu_{4,0}$
  as follows.  It is the unique norm~$10$ element of $\Lambda$ that is effective,
  has inner product~$1$ with each root
  of the $\Etilde_6$ diagram at $\nu_{4,0}$, and has inner product~$2$ with each root of the $\Atilde_2$
  diagram there.   Therefore $g$ preserves $\Delta$.  
  
  If $r\in\Lambda$ is the class of a $(-2)$ curve, then $r\cdot\Delta\geq1$, with equality if and only if
  $r$ equals some $U_{ab}$.  
  So $g$ permutes the $U_{ab}$, hence the $\alpha_{ab}$, hence preserves~$P$.
  Since it also preserves $Q$, $g$
  permutes the interior facets $\alpha_{a4}$ of $P$ amongst themselves.  By replacing $g$ by
  its composition with an element of $\frakS_4\sset G$, we may suppose that it preserves each
  of them.  The only automorphism of the Coxeter diagram $D$ with this property is
  the identity.  So $g$ acts by the identity on $\Lambda$.  Then Lemma~\ref{LemFaithful} shows
  that $g$ is the identity, completing the proof.
\end{proof}

\begin{corollary}
  \label{CorOrbitsOnCurves}
  For $t\neq\frac14$, there are two orbits of $\Aut(S)$ on the set of $(-2)$ curves, 
  with orbit representatives $U_{01},\alpha_{01}$.  For 
  $t=\frac14$, no $\alpha_{ab}$ is represented by a $(-2)$-curve, and every $(-2)$-curve
  is $\Aut(S)$-equivalent to $U_{01}$. 
\end{corollary}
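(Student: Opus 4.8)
The plan is to identify the $(-2)$-curves on $S$ with boundary roots of the cone $Q=\Nef_\R(S)\cap\Lambda_\R$ and then read off the $\Aut(S)$-orbits from Theorem~\ref{Thmnefcone}. I would first dispose of $t=1$, where $\Aut(S)=\frakS_5$ permutes the ten $U_{ab}$ transitively and the ten $\alpha_{ab}$ transitively, the two sets being separated by their inner products $1$ and $2$ with the invariant class $\Delta$. So assume $t\neq1$, whence $\lambda_a=\lambda_b$ holds exactly for $a,b\in\{0,1,2,3\}$. The first key observation makes the Enriques and Coble cases uniform: every $(-2)$-curve $R$ has class in $\Lambda$. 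Indeed, by adjunction $R\cdot K_S=0$, so $R\cdot\calC=-2\,R\cdot K_S=0$; since the components $C_i$ of $\calC$ are irreducible with $R\cdot C_i\geq0$, we get $R\perp\calC$, i.e.\ $[R]\in\Lambda$. Thus $[R]$ is a norm~$-2$ class with $Q\subseteq\{x\cdot[R]\geq0\}$ lying on $\partial Q$, and Theorem~\ref{Thmnefcone} forces $[R]$ to be a $G_0$-image of some $U_{ab}$ or of some $\alpha_{ab}$ with $\lambda_a=\lambda_b$. Conversely, since automorphisms carry $(-2)$-curves to $(-2)$-curves, the $G_0$-images of the $U_{ab}$ are always $(-2)$-curves, while a $G_0$-image of $\alpha_{ab}$ is one iff $\alpha_{ab}$ itself is. Hence the set $\mathcal R$ of $(-2)$-curves is $G_0\cdot\{U_{ab}\}$ together with $G_0\cdot\{\alpha_{ab}:\lambda_a=\lambda_b\}$ in the subcase where those $\alpha_{ab}$ are $(-2)$-curves.

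Next I would use the second paragraph of Lemma~\ref{lem:alphaab} to decide when $\alpha_{ab}$ (with $a,b\in\{0,1,2,3\}$) is a $(-2)$-curve, namely exactly when the plane $\Pi_{ab}=\{y_a+y_b=0\}$ contains no new node. For $t=\frac1{16}$ the unique new node $(1,1,1,1,-4)$ has $y_a+y_b=2$ for $a,b\in\{0,1,2,3\}$, so it lies on no such $\Pi_{ab}$ and all six $\alpha_{ab}$ are $(-2)$-curves. For $t=\frac14$ each of the four new nodes is a permutation of $(1,1,1,-1,-2)$, and one checks that the node whose $-1$ occupies position $j$ lies on $\Pi_{ab}$ precisely when $j\in\{a,b\}\subseteq\{0,1,2,3\}$; hence every such $\Pi_{ab}$ contains exactly two new nodes, and Lemma~\ref{lem:alphaab} exhibits the effective representative of $\alpha_{ab}$ as $2[D]+\frac12[C_i]+\frac12[C_j]$ with $D$ a $(-1)$-curve, so $\alpha_{ab}$ is not a $(-2)$-curve. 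This proves the $t=\frac14$ assertion that no $\alpha_{ab}$ is a $(-2)$-curve, and yields $\mathcal R=G_0\cdot\{U_{ab}\}$ in that case.

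For the orbit count I would write $G=\Aut(S)=G_0\cdot\frakS_4$, where $\frakS_4=\langle g_{ab}:a,b\in\{0,1,2,3\}\rangle$ normalizes $G_0=\langle g_{a4}\rangle$ because $\pi g_{a4}\pi^{-1}=g_{\pi(a)4}$. Since $\frakS_4$ acts transitively on the pairs in $\{0,1,2,3\}$ and, by Lemma~\ref{L2.3} with \eqref{EqUdotAlpha}, $g_{04}(U_{01})=U_{14}$ (the reflection acting trivially as $U_{01}\cdot\alpha_{04}=0$), the orbit $G\cdot U_{01}$ is exactly $G_0\cdot\{U_{ab}\}$; similarly $G\cdot\alpha_{01}=G_0\cdot\{\alpha_{ab}:a,b\in\{0,1,2,3\}\}$. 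Thus $\mathcal R$ is the union of these two $G$-orbits, giving a single orbit when $t=\frac14$ and at most two in general; it remains only to separate $U_{01}$ from $\alpha_{01}$ when $t\neq\frac14$.

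This separation is the crux, and it is not automatic, since $U_{01}$ and $\alpha_{01}$ are both roots of $\Lambda\cong E_{10}$ and hence conjugate under $\Orth(\Lambda)$. The key trick is a mod-$2$ invariant. From $s_{\alpha_{a4}}(\Delta)=\Delta+(\Delta\cdot\alpha_{a4})\alpha_{a4}=\Delta+2\alpha_{a4}$, together with the fact that $t_{a4}$ and $\frakS_4$ fix $\Delta$, every generator of $G$ moves $\Delta$ by an element of $2\Lambda$, so $\bar\Delta:=\Delta+2\Lambda$ is fixed by $\Aut(S)$. Because $\Lambda$ is even and unimodular its form descends to $\Lambda/2\Lambda$, whence $R\mapsto\bar\Delta\cdot\bar R\in\F_2$ is an $\Aut(S)$-orbit invariant of a $(-2)$-curve $R$. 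Finally $\Delta\cdot U_{01}=1$ and $\Delta\cdot\alpha_{01}=2$ have different parities, so $U_{01}$ and $\alpha_{01}$ lie in distinct orbits, completing the proof. The main obstacle is precisely locating this invariant; everything else is bookkeeping once the adjunction observation has reduced the Coble cases to the Enriques one.
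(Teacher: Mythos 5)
Your proof is correct, and for most of its length it follows the paper's own reduction: adjunction forces $R\cdot K_S=0$, so $[R]\in\Lambda$; Theorem~\ref{Thmnefcone} then pins $[R]$ among the $G_0$-images of the $U_{ab}$ and of the exterior $\alpha_{ab}$; Lemma~\ref{lem:alphaab}, together with your explicit check of which planes $\Pi_{ab}=\{y_a+y_b=0\}$ contain new nodes at $t=\frac1{16}$ versus $t=\frac14$ (a verification the paper merely asserts), settles the $t=\frac14$ case; and $\frakS_4$ plus one mixed generator merges the $U$-classes into a single orbit, exactly parallel to the paper's $g_{14}(U_{04})=U_{01}$. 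Where you genuinely diverge is the crux, separating $U_{01}$ from $\alpha_{01}$. The paper passes to the much larger group $W\semidirect\Aut(P)$, where $W$ is the Coxeter group of $P$ (this contains the image of $\Aut(S)$ by Theorem~\ref{ThmAutGenerators}), and constructs a homomorphism $W\semidirect\Aut(P)\to(\Z/2)^2$ from the standard Coxeter presentation, sending the reflections in the $U_{ab}$ and $\alpha_{ab}$ to $(1,0)$ and $(0,1)$; non-conjugacy of these images gives the separation. You instead note that every generator of $\Aut(S)$ fixes $\Delta$ modulo $2\Lambda$ (the permutation generators fix $\Delta$ outright, and $s_{\alpha_{a4}}(\Delta)=\Delta+2\alpha_{a4}$), so $x\mapsto x\cdot\Delta\bmod 2$ is an $\Aut(S)$-orbit invariant, and $\Delta\cdot U_{01}=1$, $\Delta\cdot\alpha_{01}=2$ have different parities. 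Your route is more elementary --- no Coxeter presentation, no auxiliary group --- though it proves slightly less: the paper's version yields non-conjugacy even under $W\semidirect\Aut(P)$, whereas your invariant is specific to $\Aut(S)$ (it fails on $W$, since $s_{U_{ab}}(\Delta)=\Delta+U_{ab}\not\equiv\Delta\bmod 2\Lambda$); this is irrelevant for the corollary, and your correct observation that $U_{01}$ and $\alpha_{01}$ are $\Orth(\Lambda)$-conjugate shows some group-specific input of this kind is unavoidable. Two trivia: the descent of the pairing to $\Lambda/2\Lambda$ needs neither evenness nor unimodularity, and in $g_{04}(U_{01})=U_{14}$ the orthogonality to cite is $U_{14}\cdot\alpha_{04}=0$ (harmless, since both inner products vanish).
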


\begin{proof}
  Consider the simple roots of $Q$.  By Theorem~\ref{Thmnefcone} these are
  the $\Aut(S)$-images of the $U_{ab}$ and of those $\alpha_{cd}$ for which $\lambda_c=\lambda_d$.
  Each $U_{ab}$ is a $(-2)$-curve. And by Lemma~\ref{lem:alphaab},
  each of these $\alpha_{cd}$ is either represented by a $(-2)$-curve, or else has the
  form $\alpha_{cd}=2[D]+\frac12[C_i]+\frac12[C_j]$ where $D$ is a $(-1)$-curve and $C_i,C_j$
  are two components of the bi-anticanonical divisor (and in particular are $(-4$)-curves).  
  The latter case occurs if and only
  if 
  the plane $\Pi_{cd}\sset\bbP^3$ containing $L_{cd}$ and $P_{cd}$ contains two new nodes.
  This happens only when $t=\frac14$, and then it happens for all $c,d\neq4$.
  
  Now suppose $R$ is a $(-2)$-curve in $S$.  The adjunction formula forces $R\cdot K_S=0$, so
  $R$ misses the components $C_1,\dots,C_k$ of the bi-anticanonical divisor.  It follows that
  the class of $R$ in $\Num(S)$ lies in $\Lambda$.  We claim that $R$ equals one of the
  $(-2)$-curves from the previous paragraph.  Otherwise it would have inner product${ }\geq0$ with
  all the simple roots of~$Q$.  But these define (the intersection with $\Lambda_\bbR$ of)
  the nef cone, so $R$ would be nef, contrary to $R^2<0$.
  Therefore either
  $R$ is an $\Aut(S)$-image of some $U_{ab}$, or else $t\neq\frac14$ and $R$ is an
  $\Aut(S)$-image of some $\alpha_{cd}$ with $\lambda_c=\lambda_d$.
  
  When $t=1$ we have seen that $\Aut(S)=\frakS_5$, whose orbits on the simple roots of $Q=P$
  have representatives
  $U_{01}$, $\alpha_{01}$.    So suppose $t\neq1$.
  Using $\frakS_4\sset\Aut(S)$ shows that
  $R$ is $\Aut(S)$-equivalent to $U_{01}$, $U_{04}$ or $\alpha_{01}$, with the last case only
  possible when $t\neq\frac14$. 
  We claim $g_{14}(U_{04})=U_{01}$.  
  To see this, recall that $g_{14}$ acts by the transposition $(14)$ on subscripts, followed by
  reflection in $\alpha_{14}$.  The transposition sends $U_{04}$ to $U_{01}$, which is orthogonal to
  $\alpha_{14}$.  This proves our claim.

  All that remains to prove is that $U_{01}$ and $\alpha_{01}$ are not $\Aut(S)$-equivalent.
  It suffices to show that 
  they are not equivalent under the much larger group
  $W\semidirect\Aut(P)$, where $W$ is the Coxeter group of the polytope $P$.
  This is equivalent to the non-conjugacy in $W\semidirect\Aut(P)$
  of the reflections of $\Lambda$ corresponding to $U_{01}$ and
  $\alpha_{01}$.
  To prove this non-conjugacy, map $W\semidirect\Aut(P)$ to $(\Z/2)^2$ by sending the reflections 
  corresponding to the $U_{ab},\alpha_{ab}$ to
  $(1,0),(0,1)\in(\Z/2)^2$ respectively, and sending $\Aut(P)\iso\frakS_5$ to the identity.
  That this defines a homomorphism can be checked by using the standard presentation of $W$
  in terms of the Coxeter diagram of~$P$.
  Since $(1,0)$ and $(0,1)$ are not conjugate in $(\Z/2)^2$, the reflections in 
  $U_{01}$ and
  $\alpha_{01}$ cannot be conjugate in $W\semidirect\Aut(P)$.
\end{proof}

\begin{thm} 
  \label{ThmAutGp}
  Suppose $S$ is as in theorem~\ref{ThmAutGenerators}, with $t\neq1$.  Then
  \begin{equation}
    \label{eqGroupEquality}
    \Aut(S)=\bigl((\Z/2)*(\Z/2)*(\Z/2)*(\Z/2)\bigr)\semidirect\frakS_4
  \end{equation}
  where the factors of the free product are generated by 
  $g_{04},g_{14},g_{24},g_{34}$.  
  Also, the group $\frakS_4$ is generated by the $g_{ab}$ with $a,b\neq4$,
  and permutes the factors of the free product in the obvious way.
\end{thm}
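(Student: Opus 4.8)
The plan is to realize $\Aut(S)$ as an \emph{internal} semidirect product $G_0\semidirect H$, where $G_0=\langle g_{04},g_{14},g_{24},g_{34}\rangle$ and $H=\langle g_{ab}:a,b\neq4\rangle$. Since $\lambda_a=\lambda_b$ for $a,b\in\{0,1,2,3\}$, Lemma~\ref{lem:alphaab} gives $g_{ab}=t_{ab}$, the transposition $(ab)$ on subscripts; these six transpositions generate the copy of $\frakS_4$ permuting $\{0,1,2,3\}$ and fixing $4$, and faithfulness on $\Lambda$ (Lemma~\ref{LemFaithful}) yields $H\iso\frakS_4$. A one-line conjugation on $\Lambda$ — using $\sigma t_{c4}\sigma^{-1}=t_{\sigma(c)4}$ and $\sigma(\alpha_{c4})=\alpha_{\sigma(c)4}$ for $\sigma\in\frakS_4$, together with Lemma~\ref{L2.3} — shows $\sigma g_{c4}\sigma^{-1}=g_{\sigma(c)4}$, so $H$ normalizes $G_0$ and permutes its four distinguished involutions in the obvious way. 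As all ten generators lie in $G_0\cup H$, the normalized product $G_0H$ contains them, so $G_0H=\Aut(S)$ by Theorem~\ref{ThmAutGenerators}.

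The heart of the argument is to prove $G_0\iso(\Z/2)^{*4}$ by ping-pong inside $Q=\Nef_\R(S)\cap\Lambda_\R$. The two facts I would record are, for distinct $a,b\in\{0,1,2,3\}$,
$$g_{b4}(\alpha_{a4})=\alpha_{ab}+\alpha_{b4},\qquad g_{a4}(\alpha_{ab})=\alpha_{a4}+\alpha_{b4},$$
both immediate from Lemma~\ref{L2.3} and the inner products \eqref{EqAlphaDotAlpha}. The point of the second identity is that $\alpha_{a4}+\alpha_{b4}$ is a $G_0$-image of the exterior root $\alpha_{ab}$, hence — like $\alpha_{ab}$ itself — a simple root of $Q$ by Theorem~\ref{Thmnefcone}; thus $Q$ lies in \emph{both} half-spaces $\{x\cdot\alpha_{ab}\ge0\}$ and $\{x\cdot(\alpha_{a4}+\alpha_{b4})\ge0\}$. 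Setting $X_a=\{x\in Q^\circ:x\cdot\alpha_{a4}<0\}$, these four sets are nonempty (each contains $g_{a4}(P^\circ)$, since $g_{a4}$ negates $\alpha_{a4}$) and pairwise disjoint, because $x\in X_a\cap X_b$ would give $x\cdot(\alpha_{a4}+\alpha_{b4})<0$, contradicting $Q\subseteq\{x\cdot(\alpha_{a4}+\alpha_{b4})\ge0\}$. Finally, for $a\neq b$,
$$g_{b4}(X_a)=\{x\in Q^\circ:x\cdot(\alpha_{ab}+\alpha_{b4})<0\}\subseteq X_b,$$
since $x\cdot\alpha_{ab}\ge0$ throughout $Q$. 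The ping-pong lemma for free products of involutions then gives $G_0\iso(\Z/2)^{*4}$, with the stated generators.

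To assemble the semidirect product I would note that the same ping-pong shows any nontrivial reduced word $g=g_{a_14}\cdots g_{a_n4}$ satisfies $g(P^\circ)\subseteq X_{a_1}$, which is disjoint from $P$ (as $P\subseteq\{x\cdot\alpha_{a_14}\ge0\}$); so no nonidentity element of $G_0$ stabilizes $P$. Since $H\le\frakS_5=\Aut(P)$ does stabilize $P$, the subgroup $G_0\cap H$ both stabilizes $P$ and lies in $G_0$, forcing $G_0\cap H=\{1\}$. Combined with $G_0\trianglelefteq\Aut(S)$ and $G_0H=\Aut(S)$, this yields $\Aut(S)=G_0\semidirect H\iso(\Z/2)^{*4}\semidirect\frakS_4$, with $\frakS_4$ permuting the four free factors as claimed.

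The main obstacle is the middle step, and it is subtle for a concrete reason: the naive half-spaces $\{x\cdot\alpha_{a4}<0\}$ are \emph{not} disjoint in $H^9$, because the interior walls $\alpha_{a4}^\perp$ and $\alpha_{b4}^\perp$ meet at dihedral angle $\pi/3$. What rescues ping-pong is that their common ``opposite'' sector is cut off by the facet $(\alpha_{a4}+\alpha_{b4})^\perp$ of $Q$ — that is, it lies outside $Q$ entirely. Recognizing $\alpha_{a4}+\alpha_{b4}=g_{a4}(\alpha_{ab})$ as a wall of $Q$ (via the second displayed identity and Theorem~\ref{Thmnefcone}) is therefore the crux; it is also the geometric reflection of the fact that each product $g_{a4}g_{b4}$ is parabolic of infinite order rather than a finite rotation.
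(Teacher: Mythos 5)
Your proposal is correct, but it takes a genuinely different route from the paper's. The paper proves Theorem~\ref{ThmAutGp} by specialization: since the action of $\Aut(S)$ on $\Lambda$ is independent of $t$ and of the characteristic, it reduces to $t=\frac1{16}$ in characteristic~$0$, where the unique bi-anticanonical curve $C$ yields a restriction map $\Aut(S)\to\PGL_2(\Bbbk)$, and the freeness of $\gend{\gbar_{04},\dots,\gbar_{34}}$ is then established in Theorem~\ref{thm-descriptions-of-G} by embedding the image in $\PGL_2(\bbQ_3)$ via the Hurwitz quaternions and running a ping-pong argument (in the guise of Poincar\'e's polyhedron theorem) on the Bruhat--Tits tree. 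You instead play ping-pong directly on the nef cone, entirely inside $\Lambda_\bbR$, and your computations check out: the identities $g_{b4}(\alpha_{a4})=\alpha_{ab}+\alpha_{b4}$ and $g_{a4}(\alpha_{ab})=\alpha_{a4}+\alpha_{b4}$ do follow from Lemma~\ref{L2.3} and \eqref{EqAlphaDotAlpha}; and since $\alpha_{ab}$ (for $a,b\neq4$) is rationally effective by Lemma~\ref{lem:alphaab} and automorphisms preserve rational effectivity, $Q=\Nef_\bbR(S)\cap\Lambda_\bbR$ really does lie in the half-spaces $x\cdot\alpha_{ab}\geq0$ and $x\cdot(\alpha_{a4}+\alpha_{b4})\geq0$ --- this is the one spot where ``simple root of $Q$'' needs to be unwound into a half-space statement, and the effectivity argument does it cleanly. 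That containment is exactly what makes your sets $X_a$ pairwise disjoint despite the $\pi/3$ angle between the walls $\alpha_{a4}^\perp$ and $\alpha_{b4}^\perp$; you are right that this is the crux, and it is the geometric shadow of $g_{a4}g_{b4}$ being parabolic. The remaining bookkeeping is sound: $H\iso\frakS_4$ via Lemma~\ref{lem:alphaab} and the faithfulness of Lemma~\ref{LemFaithful}, normalization by a conjugation computation on $\Lambda$, and $G_0\cap H=\{1\}$ because nontrivial reduced words move $P^\circ$ into some $X_{a_1}$, disjoint from $P$; with four factors the free-product ping-pong lemma applies without the caveat needed for two $\Z/2$'s. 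As for what each approach buys: yours is self-contained, uniform in $t\neq1$ and in the characteristic (no reduction step), and is in effect a geometric version of Swartworth's algebraic argument inside $W\semidirect\frakS_5$ mentioned in the paper's remark after Theorem~\ref{ThmAutGp}; the paper's longer detour through the $t=\frac1{16}$ Coble surface yields strictly more than the group structure, namely the injectivity of $\Aut(S)\to\Aut(C)$ and the arithmetic and tetrahedral descriptions of its image (Theorems \ref{thm-F-equals-Gbar}, \ref{ThmGpFunctor} and \ref{thm-tetrahedron-reflections}), which is the real point of Section~\ref{SecLattice}.
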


\begin{proof}
  Because the action of $\Aut(S)$ on $\Lambda$ is independent of the parameter $t$ and the characteristic
  of $\Bbbk$,
  it suffices to prove this  for $t=\frac1{16}$ in characteristic~$0$.
  In this case $k=1$, so the unique bi-anticanonical divisor of $S$ is a 
  smooth rational curve.  This yields a homomorphism $\Aut(S)\to\PGL_2(\Bbbk)$.  We show in 
  Theorem~\ref{thm-descriptions-of-G} that the images $\gbar_{a4}$ of the $g_{a4}$ generate the factors of a
  free product $(\Z/2)^{*4}\sset\PGL_2(\Bbbk)$.  It follows that the same
  holds for the $g_{a4}$ as elements of $\Aut(S)$.  The rest of the theorem is obvious.
\end{proof}

\begin{remark} 
  The fact that the subgroup of $\Aut(\Lambda)$ generated by the $g_{ab}$ is isomorphic to the
  right side of \eqref{eqGroupEquality} was first proven by W.~Swartworth \cite{Swartworth}.  He worked
  algebraically inside  $W\semidirect\frakS_5$, where $W$ is the Coxeter group
  generated by the reflections in all ten $\alpha_{ab}$.  
  Theorem~\ref{thm-descriptions-of-G} is harder than his argument, 
   but also yields
  the injectivity of
  $\Aut(S)\to\PGL_2(\Bbbk)$.
  
  Theorem~\ref{ThmAutGp} is an analogue of I.~Shimada's 
  calculation of $\Aut(S)$ when $S$ is  the Enriques surface 
  arising from a general Sylvester non-degenerate cubic surface \cite[Theorem 1.2]{Shimada}.  Shimada used 
  Borcherds' method, which is more technical but similar in flavor to ours.  
  He showed that $\Aut(S)$ is generated by the  $g_{ab}$, with defining relations
$$g_{ab}^2 = 1, \ (g_{ab}g_{bc}g_{ca})^2 = 1,\ (g_{ab}g_{cd})^2 = 1$$
where $\{a,b\}\cap \{c,d\} = \emptyset$ in the last relation.
It is an interesting question whether our methods could be adapted to recover his result, and
possibly even compute $\Aut(S)$ in the case of a arbitrary Sylvester nondegenerate cubic surface.

The group in theorem~\ref{ThmAutGp} also arose  in \cite{Mukai} as the group of automorphisms of an 
Enriques surface whose K3-cover is a quartic surface given by equation 
$s_2^2-ts_4 = 0, t\ne 0,4,36$, where $s_i$ denote elementary symmetric functions in variables 
$t_0,t_1,t_2,t_3$. These surfaces belong to a larger family given by 
the equation $t_1s_2^2+t_2s_4+t_3s_1s_3 = 0$, that includes our 1-parameter family when 
$t_1 = 0$. 
It was stated in \cite{DolgachevSalem} that the proof from \cite{Mukai} applies in our case, but
its authors have informed us that it does not. The analog of the polytope $P$ in their case is defined by the Coxeter diagram equal to the dual graph of $(-2)$-curves on Kondo's surface of type V with the same group of automorphisms isomorphic to $\frakS_5$. The surface is the limit (in the
appropriate sense) in the family when $t_2\to \infty$ (see Remark 2.3 in \cite{Mukai}).
\end{remark}

\begin{remark} We owe this remark to Matthias Sch\"utt. Consider the family of  quartic surfaces in $\bbP^3$  given by the equation
$$H_t:-ts_4(x,y,z,w)+s_1(x,y,z,w)s_3(x,y,z,w) = 0,$$
where $s_k(x,y,z,w)$ are elementary symmetric polynomials of degree $k$ 
in $x,y,z,w$. If  the characteristic $p$ of $\Bbbk$ is not equal 
to $2$ or $3$, the  surface  $H_t$  is isomorphic to the Hessian surface of a 
cubic surface with Sylvester coordinates $(1,1,1,1,t)$ 
considered in  Theorem \ref{ThmAutGenerators}. When $p= 2,3$ 
it is not the Hessian of a cubic surface, but it still
contains 10 lines $L_{ab}$ and ten nodes $P_{ab}$ 
forming the symmetric configuration $(10_3)$. The standard cubic 
Cremona involution $\tau:(x:y:z:w) \mapsto (1/x:1/y:1/z:1/w)$ leaves 
invariant each surface in the family. If $p = 2$, the 
surfaces $S_t$ with $t\ne 0,\infty$ are nonsingular, the involution acts freely and the 
quotient $S_t = H_t/(\tau)$ is an Enriques surface. There are no Coble surfaces 
in the family. If $p = 3$, the only singular surface in the family is the surface 
$S_1$. It has one  singular point $(1:1:1:1)$ and leads to a Coble surface with 
finite automorphism group isomorphic to $\frakS_5$. 
The proofs of Theorems \ref{ThmAutGenerators} and~\ref{ThmAutGp} 
and Corollary~\ref{CorOrbitsOnCurves}
extend to these families of surfaces. 
Reinterpreted in terms of the surfaces $H_t$, these results
therefore apply in all characteristics.
\end{remark}

\section{A model of $\Aut(S)$ as a lattice in $\PGL_2(\bbQ_3)$}
\label{SecLattice}

This section studies $\Aut(S)$, where $S$ is the Coble surface arising from
parameters $(\lambda_0,\dots,\lambda_4)=(1,1,1,1,\frac1{16})$, under the additional hypothesis that
$\cha\Bbbk=0$.  In this case there is one new node, so the unique bi-anticanonical divisor
is a smooth rational curve $C$, yielding a homomorphism $\Aut(S)\to\Aut(C)\iso\PGL_2(\Bbbk)$.  
We completely describe this homomorphism and its image.  The main point is that
this map is faithful.
A side benefit is that this special case is enough to identify 
the automorphism group of the surface, even when the parameters and ground field characteristic
are relaxed to $(1,1,1,1,t\neq1)$
and $\cha\Bbbk\neq2,3$. (See the proof of Theorem~\ref{ThmAutGp}.)  

Recall from Theorem~\ref{ThmAutGenerators} that $\Aut(S)$ is generated by ten involutions $g_{ab}$,
where the subscripts vary over the $2$-element subsets of $\{0,\dots,4\}$.
All we will need to know about them is the following:
\begin{enumerate}
  \item they are involutions;
  \item 
    \label{LabNontriviality}
    they act nontrivially on $C$;
  \item two commute if their corresponding pairs are disjoint;
  \item if $a,b\neq4$ then conjugation by $g_{ab}$ permutes
    the ten involutions by acting on subscripts by the transposition $(ab)$;
  \item these six $g_{ab}$ generate a copy of $\frakS_4$.
\end{enumerate}
To prove \eqref{LabNontriviality},
choose a plane $\Pi$ in $\bbP^3$ containing $P_{ab}$ and the new node $x$, such that the
tangent cone of $\Pi\cap H$ consists of two lines.  
By its definition, $g_{ab}$ exchanges them.  
So it acts by a nonscalar on the tangent cone to $H$ at~$x$, hence nontrivially on its
projectivization~$C$.

We write $\gbar_{ab}$ for the image of $g_{ab}$ in $\Aut C$, and $\Gbar$ for the subgroup 
of $\Aut C$  generated by the ten $\gbar_{ab}$.  
Our first description of $\Gbar$ is as a subgroup of $\Aut\bbH$, where $\bbH$ is Hamilton's quaternion
algebra over $\bbQ$.
  By ``norm'' we mean the reduced norm in the
sense of division algebras: $1$, $i$, $j$ and~$k$ are orthogonal unit vectors.  Because $\Bbbk$
is algebraically closed, $\Bbbk\tensor\bbH$ is isomorphic to the $2\times2$ matrix algebra $M_2(\Bbbk)$.
Therefore, any subgroup of 
$\Aut\bbH$  may
be regarded as a subgroup of $\Aut M_2(\Bbbk)\iso\PGL_2(\Bbbk)$.
We begin with a complete description of the homomorphism $\Aut S\to\Aut C$, in terms of  
an integral form of $\bbH$ called the Hurwitz integers and written $\hur$.
It is defined as the $\Z$-span of the $24$ unit norm quaternions $\pm1$, $\pm i$,
$\pm j$, $\pm k$ and $\frac12(\pm1\pm i\pm j\pm k)$.

\begin{thm}\label{thm-F-equals-Gbar}
There is an isomorphism $C\iso\bbP^1$ under which
 the four $\gbar_{a4}$ correspond to
the images in $\PGL_2(\Bbbk)$ of 
$\pm i\pm j\pm k$
and the six remaining $\gbar_{ab}$ 
correspond to the images in $\PGL_2(\Bbbk)$  
of $\pm i\pm j$, $\pm j\pm k$ and $\pm k\pm i$.
\end{thm}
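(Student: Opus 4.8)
The plan is to compute the homomorphism $\Aut(S)\to\Aut(C)$ explicitly by working at the new node $x=(1,1,1,1,-4)$, whose exceptional $\bbP^1$ is the curve $C$. By Lemma~\ref{newsing} the tangent cone to $H$ at $x$ is cut out by the quadratic form $\sum_a\epsilon_a^2/y_a$, i.e.\ $\diag[1,1,1,1,-\frac14]$, restricted to the hyperplane $\{\sum_a\epsilon_a=0\}$ and read modulo the radial direction $x$. The tangent space $T_x\bbP^3$ is thus the quotient of the permutation module on $\epsilon_0,\dots,\epsilon_3$ by the line through $(1,1,1,1)$, that is, the standard $3$-dimensional representation of $\frakS_4$; the projectivized cone is a conic $C\sset\bbP(T_x)=\bbP^2$, and every $g_{ab}$ acts on it through its action on $T_x$.

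First I would pin down the image of the six $g_{ab}$ with $a,b\neq4$. By Lemma~\ref{lem:alphaab} each such $\tilde{g}_{ab}$ acts on $H$ as the coordinate transposition $y_a\leftrightarrow y_b$, so its linearization at $x$ is the corresponding permutation of $\epsilon_0,\dots,\epsilon_3$. Since the standard representation of $\frakS_4$ is irreducible and projectively faithful, these six transpositions act faithfully on $\bbP(T_x)$, hence on $C$; therefore $\langle\gbar_{ab}:a,b\neq4\rangle$ is a faithful copy of $\frakS_4$ in $\PGL_2(\Bbbk)$, namely the octahedral group. The octahedral subgroup is unique up to conjugacy, so I may choose the isomorphism $C\iso\bbP^1$ realizing it by conjugation in Hamilton's quaternions, under which the transpositions become the edge rotations, given by conjugation by $\pm i\pm j$, $\pm j\pm k$, $\pm k\pm i$, and the double transpositions become the coordinate (face) rotations $i,j,k$.

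It then remains to identify each $\gbar_{a4}$, and the key is property~(3): $g_{a4}$ commutes with every $g_{bc}$ for $\{b,c\}\sset\{0,1,2,3\}\setminus\{a\}$, hence with the order-$3$ element $r=\gbar_{bc}\gbar_{bd}$ (a three-cycle on $\{0,1,2,3\}$). Because $r$ is regular semisimple of order $3$ (here $\cha\Bbbk=0$), its centralizer in $\PGL_2(\Bbbk)$ is the maximal torus fixing its two fixed points on $\bbP^1$, and this torus contains a \emph{unique} involution. By property~(2) the involution $\gbar_{a4}$ is nontrivial, so it must be exactly this one: geometrically, the half-turn about the axis through the two fixed points of $r$, i.e.\ the body diagonal that $r$ rotates. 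Thus $\gbar_{a4}$ is conjugation by the corresponding pure quaternion $\pm i\pm j\pm k$, and as $a$ runs over $\{0,1,2,3\}$ the axis of $r$ runs over the four body diagonals, producing the four quaternions $\pm i\pm j\pm k$ with the stated correspondence.

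I expect the main obstacle to be the rigidity step of the second paragraph: one must verify that the explicit linear action of the coordinate transpositions on $T_x$ is genuinely projectively faithful, so that the image does not collapse to a smaller dihedral group, and that the resulting octahedral subgroup really can be conjugated to the quaternionic one. Both facts follow cleanly once the tangent-cone computation is set up, and after them the centralizer argument forces the $\gbar_{a4}$ with no further geometry. What remains is a finite bookkeeping check, matching signs so that $\gbar_{a4}$ is attached to the body diagonal fixed by the stabilizer of $a$ in $\frakS_4$, which completes the identification.
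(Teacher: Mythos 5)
Your proposal is correct and takes essentially the same route as the paper's proof: establish faithfulness of the $\frakS_4$ generated by the $\gbar_{ab}$ with $a,b\neq4$ via its action on the projectivized tangent cone of $H$ at the node $(1,1,1,1,-4)$, invoke the uniqueness up to conjugacy of $\frakS_4\sset\PGL_2(\Bbbk)$ to fix the identification $C\iso\bbP^1$ matching it with the quaternionic rotation group of the cube, and then pin down each $\gbar_{a4}$ as the unique involution of $\PGL_2(\Bbbk)$ centralizing an order-$3$ (body-diagonal) rotation. Your only deviations---making the tangent-space computation explicit as the standard representation where the paper says ``easy to check,'' and obtaining the centralizing property from commutation of disjoint pairs rather than from the $\frakS_4$-conjugation action on the ten involutions---are cosmetic.
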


\begin{proof}
  Consider the convex hull in $\Im(\bbH\tensor\bbR)=\bbR^3$ of $\pm i\pm j\pm k$.  
It is a cube centered at the origin.
The twelve Hurwitz
integers $\pm i\pm j$, $\pm j\pm k$ and $\pm k\pm i$ are the midpoints of its edges. 
%\marginpar{\ID{Strictly speaking the midpoint does not belong to the graph  but only to its topological realization}} 
The
conjugation action on $\Im\bbH$ of any one of them is the 
order~$2$ rotation that fixes that  midpoint.  This proves
that the subgroup of $\Aut\bbH$ generated by their conjugation actions 
is the rotation group of the cube.  In particular, it is isomorphic to $\frakS_4$.

Now consider the subgroup $\frakS_4\sset\Aut(S)$ 
generated by the $g_{ab}$ with $a,b\neq4$.  It is easy to check that it
acts faithfully on the tangent space to $\bbP^3$ at the node 
$(1,1,1,1,-4)$ of the Hessian surface $H$.  
It follows that the nontrivial (hence noncentral) 
elements of $\frakS_4$ act on this tangent space by non-scalars,
which implies that
$\frakS_4$ acts faithfully
on the projectivized tangent cone to $H$ at that node.  That is, $\frakS_4$ acts faithfully on $C$.

$\PGL_2(\Bbbk)$ contains a unique conjugacy class of subgroups isomorphic to~$\frakS_4$.
So $C$ may be identified with $\bbP^1$ in such a way that 
these two groups $\frakS_4$ are identified.
Under this identification, the six involutions outside $\frakA_4$,
namely the $\gbar_{ab}$ with $a,b\neq4$, correspond to the six rotations of the cube
considered above.  That is, these $\gbar_{ab}$ are 
identified with the images in $\PGL_2(\Bbbk)$ of $\pm i\pm j$, $\pm j\pm k$ and $\pm k\pm i$.

By the hypothesis on how 
$\frakS_4\sset\Aut(S)$ permutes $\gbar_{04},\dots,\gbar_{34}$, we know that
some $\gbar_{a4}$ centralizes each order~$3$ subgroup of the rotation group of the 
cube. For example, $\gbar_{04}$ centralizes the subgroup generated by $\gbar_{12}\circ \gbar_{23}$.
%\marginpar{\ID{Added a sentence}}  
The order~$3$ subgroups are generated by the order~$3$ rotations around the
body-diagonals of the cube.  Only one order~$2$ 
element of $\PGL_2(\Bbbk)$ centralizes any given order~$3$ element of $\PGL_2(\Bbbk)$.  
In our case it  is easy to
exhibit: the order~$2$ 
rotation around that same body-diagonal.
That is, by the conjugacy action of one of ${ }\pm i\pm j\pm k$.  Therefore the
$\gbar_{04},\dots,\gbar_{34}$ act as stated.
%(By the action of $\frakS_4$, we may choose the correspondence between $\gbar_{04},\dots,
 % \discretionary{}{}{}\gbar_{34}$
  %and the body diagonals to be any given bijection.  Once this is fixed, 
  %the remaining six $\gbar_{ab}$ are determined by their action on the body-diagonals.
  %So the statement of the theorem determines the homomorphism $\Aut(S)\to\PGL_2(\Bbbk)$ up
%to conjugacy.)
\end{proof}
%\marginpar{\ID{deleted a paragraph because of the following corollary.}}

%\marginpar{\ID{Added corollary allowing me to delete the last section}}
\begin{corollary}
  \label{tetrahedron} 
  The image $\bar{G}$ of the restriction homomorphism $\Aut(S)\to \Aut(C)$ is conjugate to the 
  subgroup of $\SO(3)$ generated by rotations $r_{ab},0\le  a,b\le 3$ around the midpoints of the
  edges of a cube, and the order 2 rotations $r_{a4}$ around the four body diagonals. 
  \qed
\end{corollary}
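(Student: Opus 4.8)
The plan is to read the corollary off directly from Theorem~\ref{thm-F-equals-Gbar}, translating its quaternionic description of the generators into the language of rotations. The starting point is the observation that, by Skolem--Noether, every automorphism of Hamilton's algebra $\bbH$ is inner, and conjugation by a unit preserves the reduced norm on the imaginary part $\Im\bbH\iso\bbR^3$; hence $\Aut\bbH=\SO(\Im\bbH)=\SO(3)$. Extending scalars to $\Bbbk$ gives the chain $\SO(3)=\Aut\bbH\hookrightarrow\Aut(\Bbbk\tensor\bbH)=\Aut M_2(\Bbbk)\iso\PGL_2(\Bbbk)$ already used in the proof of that theorem. Theorem~\ref{thm-F-equals-Gbar} exhibits, under a suitable isomorphism $C\iso\bbP^1$, each generator $\gbar_{ab}$ of $\Gbar$ as the image in $\PGL_2(\Bbbk)$ of conjugation by a real pure quaternion. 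Since each such conjugation already lies in the copy of $\SO(3)$ just described, $\Gbar$ is carried into that $\SO(3)$, and the word ``conjugate'' in the statement absorbs the freedom in the choice of the identification $C\iso\bbP^1$.

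What then remains is only to name the rotations geometrically. I would record the elementary fact that for a nonzero pure quaternion $v$ the conjugation $w\mapsto vwv^{-1}$ fixes $v$ and negates $v^\perp$ inside $\Im\bbH$ (concretely, $vwv^{-1}=-w$ whenever $w\perp v$), so it is the half-turn about the axis $\bbR v$; in particular every such generator has order~$2$. Applying this to the twelve quaternions $\pm i\pm j$, $\pm j\pm k$, $\pm k\pm i$, whose directions point to the midpoints of the edges of the cube $\mathrm{conv}(\pm i\pm j\pm k)$, identifies the six $\gbar_{ab}$ with $a,b\neq4$ as the half-turns $r_{ab}$ about the edge-midpoint axes. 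Applying it to the eight vertices $\pm i\pm j\pm k$, which represent the four body diagonals of the cube, identifies the four $\gbar_{a4}$ as rotations about the body-diagonal axes; here it is worth stressing that although the cube admits order~$3$ rotations about each body diagonal, the elements we obtain are the order~$2$ half-turns, matching the corollary's emphasis on ``order~$2$''. This is precisely the asserted generating set.

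Because all the substantive content---the identification of $\Gbar$ with an explicit quaternionic group---was already carried out in Theorem~\ref{thm-F-equals-Gbar}, I do not expect a genuine obstacle in this corollary; it is a geometric reinterpretation rather than a new argument. The only points demanding care are the bookkeeping that conjugation by a pure quaternion is a half-turn (as opposed to some other rotation), which is the one-line computation above, and the matching of the two distinguished families of axes---edge midpoints versus body diagonals---to the two families $\{\gbar_{ab}:a,b\neq4\}$ and $\{\gbar_{a4}\}$. Both are immediate from the explicit lists in Theorem~\ref{thm-F-equals-Gbar}.
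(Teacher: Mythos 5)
Your proposal is correct and takes essentially the same route as the paper, which states the corollary without separate proof precisely because the proof of Theorem~\ref{thm-F-equals-Gbar} already records the geometric picture you describe: the cube $\mathrm{conv}(\pm i\pm j\pm k)$, the twelve quaternions $\pm i\pm j$, $\pm j\pm k$, $\pm k\pm i$ as its edge midpoints, and the conjugation action of each generator as the order-$2$ rotation about the corresponding axis. Your one explicit addition---the check that conjugation by a pure quaternion $v$ fixes $v$ and negates $v^\perp$, hence is the half-turn about $\bbR v$---is exactly the fact the paper uses implicitly, so the two arguments coincide.
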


 %\begin{proof} The first assertion follows from the proof. In fact, when we fix the correspondence between $\gbar_{04},\dots,
 % \discretionary{}{}{}\gbar_{34}$ and  the body diagonals,  the remaining six $\gbar_{ab}$ are determined by their action on the body-diagonals. 
 %\end{proof} 
 
\begin{remark}
  \label{RemMatrices}
  One can write down explicit matrices for the $\gbar_{ab}$ by choosing an
  isomorphism $\hur\tensor\Bbbk\iso M_2(\Bbbk)$.  For example, if $\Bbbk=\bbC$ then one
  standard isomorphism is
  \begin{equation*}
  i\leftrightarrow\begin{pmatrix}0&-\bfi\\-\bfi&0\end{pmatrix}
  \qquad
  j\leftrightarrow\begin{pmatrix}0&-1\\1&0\end{pmatrix}
  \qquad
  k\leftrightarrow\begin{pmatrix}-\bfi&0\\0&\bfi\end{pmatrix}
  \end{equation*}
  where $\bfi=\sqrt{-1}\in\bbC$.
  So the matrices for the $\gbar_{ab}$ with $a,b\neq4$ are the signed sums of pairs
  of these matrices, and the matrices for the $\gbar_{a4}$ are the signed sums of all three.
  (This gives $20$ matrices, but only $10$ up to sign.) By direct computation of the restriction homomorphism for the Coble surface, the second author was able, using MAPLE, to find 10 matrices corresponding to $\bar{g}_{ab}$ that generate a group conjugate  to the group generated by the symmetries of a regular tetrahedron and reflections across its facets. The present proof gives a more elegant and non-computational proof of this result. 
  %\marginpar{\ID{Added a paragraph}.}
\end{remark}

%It follows from the proof of the theorem that $\Gbar$ is the subgroup
%of $\SO(3)$ generated by the rotation group of the cube and the order~$2$ rotations
%around its four body-diagonals.  
Although pretty, Corollary~\ref{tetrahedron}'s description of $\Gbar$ is difficult
to use because $\Gbar$ is not discrete in $\SO(3)$.  Our next result realizes
$\Gbar$ as a 
discrete group in $\PGL_2(\bbQ_3)$ rather than $\SO(3)$.  Here $\bbQ_3$ is the field
of $3$-adic rational numbers.  
The embedding 
$\Gbar\to\PGL_2(\bbQ_3)$ arises as follows.  

We write $\bfF$ for the algebraic group scheme over $\Z$, which to each commutative ring
$R$ assigns the group $\Aut(\hur\tensor R)$. 
This is just a $\Z$-form of  $\PGL_2$, in the sense that the functor becomes equal to $\PGL_2$
after base changing to any
field over which the division algebra $\bbH$ splits.
%\marginpar{\ID{Added a sentence. Please check whether it makes sense}} 
We claim that $\Gbar\sset\bfF(\Z[\frac13])$.
To see this, note that ${ }\pm i\pm j\pm k$ have norm~$3$ and so their inverses
lie in $\hur\tensor\Z[\frac13]$.  
So their conjugation maps lie in $\bfF(\Z[\frac13])$.
The inverses of ${ }\pm i\pm j$, ${ }\pm j\pm k$ and ${ }\pm k\pm i$
do not lie in $\hur\tensor\Z[\frac13]$.  But the conjugation maps of these 
Hurwitz integers do preserve $\hur$,
hence lie in $\bfF(\Z)$.  So $\Gbar\sset\bfF(\bbZ[\frac13])$.
The embedding $\Z[\frac13]\to\bbQ_3$ induces an inclusion $\bfF(\bbZ[\frac13])\to\bfF(\bbQ_3)$.
Since $\bbH$ splits over $\bbQ_3$, we have $\bfF(\bbQ_3)\iso\PGL_2(\bbQ_3)$.  Putting
all of this together yields an embedding $\Gbar\to\PGL_2(\bbQ_3)$.

\begin{lem} 
  \label{LemDiscreteness}
  The groups $\bfF(\bbZ[\frac13])$ and $\Gbar$
  are discrete in  $\bfF(\bbQ_3)\iso\PGL_2(\bbQ_3)$.
\end{lem}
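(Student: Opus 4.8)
The plan is to treat $\bfF(\Z[\frac13])$ as an $S$-arithmetic group with $S=\{3,\infty\}$ and to exploit that its archimedean factor is compact. Since $\Gbar\subseteq\bfF(\Z[\frac13])$, any subgroup of a discrete group being discrete, it suffices to prove the assertion for $\bfF(\Z[\frac13])$ itself.

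First I would fix a presentation of $\bfF$ as a closed affine subscheme of some affine space $\bbA^N_{\Z}$: let $\bfF$ act on $\hur$ by the regular representation, so that a point of $\bfF(R)$ is recorded by the matrix entries (of the action and of its inverse) lying in $R^N$. With respect to this embedding $\bfF(\Z[\frac13])$ lands in $\Z[\frac13]^N$. The diagonal embedding $\Z[\frac13]\hookrightarrow\R\times\bbQ_3$ is discrete — this is the standard fact that $\Z[\frac1p]$ is a discrete (indeed cocompact) subgroup of $\R\times\bbQ_p$ — and hence so is $\Z[\frac13]^N$ in $\R^N\times\bbQ_3^N$. Intersecting with the closed subset cut out by the defining equations of $\bfF$, I conclude that $\bfF(\Z[\frac13])$ is discrete in $\bfF(\R)\times\bfF(\bbQ_3)$ under the diagonal map.

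Next I would observe that $\bfF(\R)=\Aut(\hur\tensor\R)=\Aut(\bbH_{\R})$ is compact. Indeed $\bbH_{\R}$ is Hamilton's real quaternion algebra, a division algebra with center $\R$, so by Skolem--Noether every automorphism is inner; thus $\bfF(\R)\iso\bbH_{\R}^{\times}/\R^{\times}\iso\SO(3)$, which is compact. This is exactly the statement that $\bbH$ is ramified at the infinite place, and it is the geometric source of the discreteness we are after: without anisotropy at $\infty$ the single finite place $3$ would not suffice.

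Finally I would invoke the elementary fact that a discrete subgroup $\Gamma$ of a product $K\times H$ with $K$ compact projects to a discrete subgroup of $H$. For a relatively compact neighborhood $U$ of the identity in $H$, the set $\Gamma\cap(K\times\overline{U})$ is a discrete closed subset of the compact set $K\times\overline{U}$, hence finite; so its image meets $\overline{U}$ in finitely many points, whence the image subgroup is discrete at the identity and therefore everywhere. Applying this with $K=\bfF(\R)$ and $H=\bfF(\bbQ_3)$ shows that the image of $\bfF(\Z[\frac13])$ in $\bfF(\bbQ_3)\iso\PGL_2(\bbQ_3)$ is discrete, and the same then holds for its subgroup $\Gbar$. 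Every step here is formal except the compactness of $\bfF(\R)$, which carries the real content; it is not technically difficult, but it is the conceptual heart of the argument, since dropping it makes the conclusion false.
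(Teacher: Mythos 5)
Your proposal is correct and follows essentially the same route as the paper, which likewise deduces discreteness of $\bfF(\bbZ[\frac13])$ in $\bfF(\bbR)\times\bfF(\bbQ_3)$ from the discreteness of $\bbZ[\frac13]$ in $\bbR\times\bbQ_3$ and then projects onto the second factor using the compactness of $\bfF(\bbR)\iso\SO(3)$. You have simply made explicit two steps the paper leaves implicit: the affine embedding justifying the first deduction, and the standard lemma that a discrete subgroup of $K\times H$ with $K$ compact projects to a discrete subgroup of $H$.
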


\begin{proof}
  The discreteness of $\bbZ[\frac13]$ in $\bbR\times\bbQ_3$ shows that $\bfF(\bbZ[\frac13])$
  is discrete in $\bfF(\bbR)\times\bfF(\bbQ_3)$.  Projecting onto the second factor
  preserves discreteness because $\bfF(\bbR)\iso\SO(3)$ is compact.
\end{proof}

Over any $p$-adic field, $\PGL_2$ acts properly on a certain tree, which is 
a standard tool for working with discrete subgroups.  We recall the construction 
over $\bbQ_3$.
This tree $\calT$ has vertex set equal to the set of homothety classes of lattices (rank two
$\Z_3$-submodules) in $\Q_3^2$.  Note that while $\PGL_2(\Q_3)$ does not act on $\Q_3^2$,
it does act on the set of homothety classes of lattices.  Two vertices are adjacent just if
there are lattices representing them, such that one contains the other of index~$3$.  
In particular, if a lattice $L$ represents a vertex $v$ of~$\calT$, then its neighbors in 
$\calT$ correspond to the index~$3$ sublattices of $L$, or equivalently the $1$-dimensional
subspaces of $L/3L\iso\F_3^2$.  Under $\SL_2(\Q_3)$, the vertices fall into two orbits, which
are exchanged by any element of $\GL_2(\Q_3)$ of determinant~$3$.

\begin{lem}
  \label{lemBinTet}
  Let $A$ be the binary tetrahedral group consisting of the $24$ units of~$\hur$.
 Its image $\Abar$ in $\bfF(\bbZ)$ lies in~$\Gbar$.
\end{lem}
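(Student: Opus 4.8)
The plan is to verify the asserted containment after mapping both groups into $\bfF(\bbR)\iso\SO(3)$, where it reduces to the elementary fact that the rotation group of a regular tetrahedron is a subgroup of that of a cube.

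First I would record why this reduction is legitimate. By definition $\Abar$ is the image of $A$ under the conjugation representation $u\mapsto(x\mapsto uxu^{-1})$ of the units of $\hur$, so $\Abar\sset\bfF(\bbZ)\sset\bfF(\bbZ[\frac13])$, while $\Gbar\sset\bfF(\bbZ[\frac13])$ by the discussion preceding Lemma~\ref{LemDiscreteness}. The restriction map $\bfF(\bbZ[\frac13])\to\bfF(\bbR)$ is injective, because an automorphism of $\hur\tensor\bbZ[\frac13]$ is determined by its scalar extension to $\hur\tensor\bbR$, of which $\hur\tensor\bbZ[\frac13]$ is a subring. Hence $\Abar\sset\Gbar$ holds in $\bfF(\bbZ[\frac13])$ if and only if the corresponding containment holds in $\bfF(\bbR)\iso\SO(3)$, and it suffices to argue there.

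Next I would identify the two images in $\SO(3)$ acting on $\Im(\bbH\tensor\bbR)=\bbR^3$. Conjugation by a nonzero purely imaginary quaternion is the $180^\circ$ rotation about its axis, and conjugation by one of $\frac12(\pm1\pm i\pm j\pm k)$ is a $120^\circ$ rotation about a body diagonal of the cube with vertices $\pm i\pm j\pm k$, exactly as in the proof of Theorem~\ref{thm-F-equals-Gbar}. Thus the image of $\Abar$ consists of the identity, the three coordinate-axis $180^\circ$ rotations coming from $\pm i,\pm j,\pm k$, and the eight body-diagonal $120^\circ$ rotations coming from the half-integer units; these twelve elements form precisely the rotation group of a regular tetrahedron. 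On the other hand, the proof of Theorem~\ref{thm-F-equals-Gbar} shows that the six $\gbar_{ab}$ with $a,b\neq4$ map to the six $180^\circ$ rotations about the edge-midpoints of this same cube and generate its full rotation group $\frakS_4$.

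Finally I would invoke the standard inclusion of the tetrahedral rotation group in the octahedral one: each of the twelve elements listed for $\Abar$ (identity, the face $180^\circ$ rotations, and the vertex $120^\circ$ rotations) is a rotational symmetry of the cube, so the image of $\Abar$ lies in $\frakS_4$, which is contained in the image of $\Gbar$. By the injectivity above this yields $\Abar\sset\Gbar$. The only point needing care is bookkeeping: although the generators $\gbar_{a4}$ of $\Gbar$ act as the body-diagonal $180^\circ$ rotations, which do \emph{not} preserve the cube, they play no role here, since the six edge-rotations $\gbar_{ab}$ ($a,b\neq4$) already supply a copy of the octahedral group containing all of $\Abar$.
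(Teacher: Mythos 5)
Your proposal is correct, but it reaches the conclusion by a genuinely different route than the paper. The paper stays inside the quaternion algebra and is purely computational: it observes the identity $(j-k)(i-j)^{-1}=(-1+i+j+k)/2$, so that the conjugation map of this order-$3$ unit is a product of two of the norm-$2$ generators $\gbar_{ab}$ (with $a,b\neq4$) and hence lies in $\Gbar$; conjugating by the six norm-$2$ generators then yields the conjugation maps of all eight order-$3$ units $(-1\pm i\pm j\pm k)/2$, and these generate $A$. You instead transfer the containment to $\bfF(\bbR)\iso\SO(3)$ through the base-change map $\bfF(\bbZ[\frac13])\to\bfF(\bbR)$, whose injectivity you correctly verify, and then quote the classical inclusion of the tetrahedral rotation group in the octahedral one: the image of $\Abar$ is the twelve-element group $\frakA_4$ (identity, three coordinate-axis half-turns from $\pm i,\pm j,\pm k$, eight body-diagonal $120^{\circ}$ rotations from the half-integer units), each of which preserves the cube, while Theorem~\ref{thm-F-equals-Gbar} shows the six $\gbar_{ab}$ with $a,b\neq4$ map to the edge-midpoint half-turns, which generate the full cube group $\frakS_4$ since they act as the six transpositions of the body diagonals. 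Your reduction is sound --- injectivity is exactly what is needed to pull the containment of images in $\SO(3)$ back to $\Abar\sset\Gbar$ --- and your caution about the $\gbar_{a4}$ (half-turns about body diagonals, which are \emph{not} cube symmetries) is apt but, as you note, harmless since they play no role. Comparing the two: the paper's argument is shorter and constructive, expressing the order-$3$ generators of $A$ explicitly as words in the $\gbar_{ab}$ with no auxiliary injectivity statement; yours is more conceptual and, as a by-product, identifies $\Abar$ precisely as the index-$2$ subgroup $\frakA_4$ of the copy of $\frakS_4$ inside $\Gbar$, a refinement the paper leaves implicit. Both arguments in fact prove the slightly stronger statement that $\Abar$ lies in the subgroup generated by the six $\gbar_{ab}$ with $a,b\neq4$.
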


\begin{proof}
  %By theorem~\ref{xx}, $\Gbar$ contains the conjugation map of 
%  The subgroup of the multiplicative group $\bbH^*$ generated by
%  ${ }\pm i\pm j$, ${ }\pm j\pm k$ and ${ }\pm k\pm i$
%  contains 
  Observe that
  $(j-k)(i-j)^{-1} = (−1+i+j+k)/2$.  This quaternion is an order~$3$ element of~$A$,
  and its conjugation map lies in $\Gbar$ by theorem~\ref{thm-F-equals-Gbar}.  Conjugating by
  ${ }\pm i\pm j$, ${ }\pm j\pm k$ and ${ }\pm k\pm i$ shows that $\Gbar$ contains
  the conjugation maps of  all eight order~$3$ elements of~$A$,
  namely $(-1\pm i\pm j\pm k)/2$.  These generate $A$.  
  %Taking images in $\Aut\hur$ shows
  %$\Abar\sset\Gbar$.
\end{proof}

\begin{lem}\label{lem3.5}
	\label{lem-A-action-on-T}
	The binary tetrahedral group $A$ fixes a unique vertex $v$ of $\calT$.  Each order~$3$
	element of~$A$ fixes $v$, exactly one neighbor of~$v$, and no other vertices of~$\calT$.
\end{lem}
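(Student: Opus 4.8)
The plan is to realize $\calT$ as the tree of homothety classes of lattices in the split algebra $\bbH\tensor\bbQ_3\iso M_2(\bbQ_3)$, to locate $v$ as the vertex attached to the maximal order $\hur\tensor\bbZ_3$, and then to compute the fixed points of a single element by passing to the quadratic subfield it generates, where a ramified/unramified dichotomy does all the work. First I would set up the base vertex. Since $\bbH$ ramifies only at $2$ and $\infty$, the algebra $\bbH\tensor\bbQ_3$ is split and $\hur\tensor\bbZ_3$ is a maximal order in it; fixing an isomorphism $\bbH\tensor\bbQ_3\iso M_2(\bbQ_3)$ carrying $\hur\tensor\bbZ_3$ onto $M_2(\bbZ_3)=\End_{\bbZ_3}(\bbZ_3^2)$ exhibits $v:=[\bbZ_3^2]$ as a vertex whose stabilizer in $\PGL_2(\bbQ_3)$ is $\PGL_2(\bbZ_3)=\Aut(M_2(\bbZ_3))=\bfF(\bbZ_3)$. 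Conjugation by a unit of $\hur$ preserves $\hur$, hence preserves $\hur\tensor\bbZ_3$, so the image of $A$ lands in $\bfF(\bbZ_3)=\Stab(v)$. In particular every element of $A$, and every order~$3$ element, fixes $v$.

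Next I would carry out the key computation: the fixed vertices of a single non-central element. For $u\in A\setminus\{\pm1\}$ the characteristic polynomial of the image of $u$ in $M_2(\bbQ_3)$ is irreducible over $\bbQ_3$ --- for an order~$3$ element $q$ (one of $(-1\pm i\pm j\pm k)/2$) it is $x^2+x+1$, and for $i$ it is $x^2+1$ --- so $\bbQ_3^2$ becomes a one-dimensional vector space over the quadratic field $E=\bbQ_3[u]$, with $u$ acting as multiplication by a unit of $\calO_E$. A class $[L]$ is fixed by $[u]\in\PGL_2(\bbQ_3)$ iff $uL=\lambda L$ for some $\lambda\in\bbQ_3^*$; cubing (resp.\ squaring) and using $u^3=1$ (resp.\ $u^2=-1$) forces $\lambda^3\in\bbZ_3^*$ (resp.\ $\lambda^2\in\bbZ_3^*$), hence $\lambda\in\bbZ_3^*$, whence $L$ is stable under $\bbZ_3[u]=\calO_E$, i.e.\ $L$ is a fractional $\calO_E$-ideal; conversely every such ideal is fixed. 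Thus $\mathrm{Fix}([u])$ is exactly the set of homothety classes of fractional $\calO_E$-ideals. Since $\calO_E$ is a discrete valuation ring with uniformizer $\pi_E$, these ideals are the $\pi_E^{\,n}\calO_E$, and scaling by $\bbQ_3^*=3^{\bbZ}\bbZ_3^*$ shifts $n$ by $e=v_E(3)$. Hence there are exactly $e$ such classes: two when $E/\bbQ_3$ is ramified and one when it is unramified.

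I would then read off both assertions from this dichotomy. For an order~$3$ element $\rho=[q]$ we have $E=\bbQ_3(\zeta)$ with $\zeta$ a primitive cube root of unity; since $3$ ramifies in $\bbQ(\zeta)$ this extension is ramified, so $\rho$ fixes exactly the two vertices $[\calO_E]$ and $[\pi_E\calO_E]$. These are adjacent, because $[\calO_E:\pi_E\calO_E]=3$ (the residue field of $E$ is $\F_3$). One of the two is $v$, as $q\in\hur$ fixes $v$; so $\rho$ fixes $v$, exactly one neighbor of $v$, and nothing else, which is the second sentence of the lemma. For uniqueness of the global fixed vertex I would apply the computation to the order~$4$ element $i\in A$: here $E=\bbQ_3(\sqrt{-1})$ is unramified, since $-1$ is not a square modulo $3$, so $[i]$ fixes a single vertex, necessarily $v$. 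As $A$ contains $i$, we get $\mathrm{Fix}(A)\subseteq\mathrm{Fix}([i])=\{v\}$, and combined with $v\in\mathrm{Fix}(A)$ this gives $\mathrm{Fix}(A)=\{v\}$.

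The hard part is the middle paragraph: setting up the dictionary between vertices of $\calT$ and lattice classes correctly, and then identifying $\mathrm{Fix}([u])$ with the homothety classes of $\calO_E$-ideals and counting them. Once that is in place, everything else --- the splitting of $\bbH$ at $3$, the maximality of $\hur\tensor\bbZ_3$, and the ramification of $\bbQ_3(\zeta)$ against the unramifiedness of $\bbQ_3(\sqrt{-1})$ --- is standard input, and the two halves of the lemma fall out immediately from the count being $2$ versus $1$.
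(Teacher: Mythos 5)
Your proof is correct, and it takes a genuinely different route from the paper's. The paper never leaves the base vertex's local structure: it shows the composition $A\to\SL_2(\bbZ_3)\to\SL_2(\F_3)$ is an isomorphism, reads off from the $\frakA_4$-action on the four lines in $\F_3^2$ that $A$ fixes no neighbor of $v$ while each order~$3$ element fixes exactly one, and then kills fixed vertices at distance~$2$ by an explicit computation with order~$3$ elements of $\SL_2(\Z/9)$ stabilizing a $\Z/9$-summand of $(\Z/9)^2$ (implicitly using convexity of fixed sets in a tree to stop at distance~$2$). You instead compute the \emph{entire} fixed set of a single elliptic element in one stroke: since the characteristic polynomial of $u$ is irreducible, $\bbQ_3^2$ is a line over $E=\bbQ_3[u]$, fixed lattice classes are exactly homothety classes of fractional $\calO_E$-ideals, and their number is the ramification index $e$ of $E/\bbQ_3$ --- giving $2$ adjacent fixed vertices for the order~$3$ elements (since $\bbQ_3(\zeta_3)$ is ramified) and a unique fixed vertex for $i$ (since $\bbQ_3(\sqrt{-1})$ is unramified), whence uniqueness of $\mathrm{Fix}(A)$ via $i\in A$ rather than via the $\frakA_4$-action on neighbors. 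Your identifications are all sound: $\hur\tensor\bbZ_3$ is maximal because $\bbH$ is unramified at~$3$, maximal orders in $M_2(\bbQ_3)$ are conjugate, $\bbZ_3[u]=\calO_E$ in both cases, and the homothety count $\pi_E^n\calO_E \bmod \bbQ_3^*$ correctly yields $\Z/e$. The trade-off: your argument is conceptually cleaner and needs no truncation at distance~$2$ or mod-$9$ matrix bookkeeping, and it generalizes immediately to fixed sets of elliptic elements on Bruhat--Tits trees; the paper's argument is more elementary and self-contained, requiring only finite computations in $\SL_2(\F_3)$ and $\SL_2(\Z/9)$ and no input from the arithmetic of quaternion orders or local ramification theory.
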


\begin{proof}
	Because $A$ is a finite subgroup of $\SL_2(\Q_3)$, it preserves some lattice~$L$, for example
	the $\Z_3$-span of the $A$-images of your favorite nonzero vector.  We write $v$ for the
	corresponding vertex of~$\calT$.  By identifying $L$ with $\Z_3^2$, we identify the $\SL_2(\Q_3)$-stabilizer
	of $L$ with $\SL_2(\Z_3)$.  Because the kernel of $\SL_2(\Z_3)\to\SL_2(\F_3)$ is a pro-$3$ group,
	the normal subgroup $Q_8$ of $A$ maps faithfully to $\SL_2(\F_3)$.  Since every $\Z/3$ subgroup
	of $A$ acts nontrivially on $Q_8$, it also maps faithfully.  Therefore the composition
	$A\to\SL_2(\Z_3)\to\SL_2(\F_3)$ is injective.  It is even an isomorphism, because $|\SL_2(\F_3)|=24$.

	We have shown that $A$ acts on $L/3L\iso\F_3^2$ as $\SL_2(\F_3)$.  In particular, it permutes
	the four $1$-dimensional $\F_3$-subspaces as the alternating group of degree~$4$.  It follows that
	$A$ fixes no neighbor of $v$ (hence no point of $\calT$ other than~$v$), and that each order~$3$
	element of~$A$ fixes exactly one neighbor of~$v$.  

	It remains to show that no vertex at distance~$2$ from~$v$ is fixed by any order~$3$ element of~$A$.
	Each such vertex is represented by a lattice $M$ having index~$9$ in $L$.  Furthermore, $L/M$
	cannot be isomorphic to $(\Z/3)^2$, because that would force $M=3L$, which corresponds to the
	vertex~$v$ rather than to a vertex at distance~$2$.  Therefore $L/M\iso\Z/9$.  It follows that the
	vertices at distance~$2$ correspond to the $12$ subgroups $\Z/9$ of $L/9L\iso(\Z/9)^2$.  It
	suffices to show that no order~$3$ element of $A$ preserves any one of them.  This follows from
	the claim: every order~$3$ element of $\SL_2(\Z/9)$, that preserves some $\Z/9\sset(\Z/9)^2$,
	lies in the kernel of $\SL_2(\Z/9)\to\SL_2(\F_3)$.

	To prove the claim, we use the fact that all the $\Z/9$'s are $\SL_2(\Z/9)$-equivalent, so
	it is enough to examine the order~$3$ elements in the stabilizer of the $\Z/9$ 
  generated by $\bigl(\begin{smallmatrix}1\\0\end{smallmatrix}\bigr)$.  
  This stabilizer is the semidirect product
  $\gend{\tau}\semidirect\gend{\sigma}$, where
  $\tau=\bigl(\begin{smallmatrix}1&1\\0&1\end{smallmatrix}\bigr)$ has order~$9$ and
   $\sigma=\bigl(\begin{smallmatrix}2&0\\0&1/2\end{smallmatrix}\bigr)$ has order~$6$.
One can check that 
	$\sigma\tau\sigma^{-1}=\tau^4$.  We must show that every order~$3$ element 
  $x\in\gend{\tau}\semidirect\gend{\sigma}$
	has trivial image in $\SL_2(\F_3)$.  It is clear that every order~$3$ element lies in 
$\gend{\tau}\semidirect\gend{\sigma^2}$.  
%By replacing $x$ by $x^{-1}$ we may suppose $x=\tau^i\sigma^{0\,{\rm or}\,2}$.
If $x=\tau^i$ then the relation $x^3=1$ forces $3|i$.  If $x=\tau^i\sigma^{\pm2}$ then the
relation $x^3=1$ boils down to $\tau^{273i}=1$, which again forces $3|i$.
We have proven
 $x\in\gend{\tau^3}\semidirect\gend{\sigma^2}$.  
 This implies our claim, because $\tau^3$ and $\sigma^2$ map to the identity of $\SL_2(\F_3)$.
  \end{proof}

\begin{thm}\label{thm3.6}
	\label{thm-descriptions-of-G}
	\leavevmode
	\begin{enumerate}
		\item
			\label{item-four-Z-mod-2s}
      The subgroup of $\Aut(C)\iso\PGL_2(\Bbbk)$ generated by $\gbar_{04},\dots,\gbar_{34}$
				is the free product
				$\gend{\gbar_{04}}*\cdots*\gend{\gbar_{34}}$ of four copies of~$\Z/2$.  It acts simply
				transitively on the vertices of~$\calT$.
		\item
			\label{item-description-of-G}
      The image
      $\Gbar$ of $\Aut(S)$ in $\Aut(C)\iso\PGL_2(\Bbbk)$ is the semidirect product
      of the group from \eqref{item-four-Z-mod-2s} by the symmetric group~$\frakS_4$,  
      permuting the free factors $\Z/2$ in the obvious way.
	\end{enumerate}
  In particular,
 the natural map $\Aut(S)\to\Aut(C)$ is injective.
\end{thm}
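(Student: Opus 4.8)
The plan is to make everything concrete on the Bruhat--Tits tree $\calT$ of $\PGL_2(\bbQ_3)$, using the vertex $v$ fixed by the binary tetrahedral group $A$ from Lemma~\ref{lem-A-action-on-T} and the inclusion $\Abar\sset\Gbar$ from Lemma~\ref{lemBinTet}. First I would pin down how the four involutions $\gbar_{a4}$ move $v$; then run a ping-pong argument to get part~\ref{item-four-Z-mod-2s}; then identify the stabiliser $\frakS_4$ to get part~\ref{item-description-of-G}; and finally transport the resulting structure back up to $\Aut(S)$ to get injectivity. The hard part is the very first step: showing that the four $\gbar_{a4}$ invert \emph{exactly} the four edges incident to $v$. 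Everything afterward is essentially formal.

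The key observation is that each $\gbar_{a4}$ inverts a single edge at $v$, and that these four edges are distinct. Recall from the proof of Theorem~\ref{thm-F-equals-Gbar} that $\gbar_{04}$ centralizes the order-$3$ rotation $\gbar_{12}\gbar_{23}$, which is the conjugation action of an order-$3$ element $\omega$ of $A$. By Lemma~\ref{lem-A-action-on-T} the fixed-point set of $\omega$ on $\calT$ is the single closed edge joining $v$ to its unique $\omega$-fixed neighbour $w$. Since $\gbar_{04}$ commutes with $\omega$, it preserves this edge. On the other hand $\gbar_{04}$ is conjugation by a quaternion of norm~$3$, so its image in $\PGL_2(\bbQ_3)$ has determinant of odd valuation; hence it exchanges the two types of vertices of $\calT$ and fixes no vertex. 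An involution of a tree with no fixed vertex inverts a unique edge, so $\gbar_{04}$ must swap $v$ and $w$. Conjugating this by the copy of $\frakS_4$ generated by the $\gbar_{ab}$ with $a,b\neq4$ (which fixes $v$; see the last paragraph) shows that each $\gbar_{a4}$ swaps $v$ with a neighbour $w_a$ fixed by one of the four order-$3$ subgroups of $A$. Because these subgroups correspond bijectively to the four neighbours of $v$ by Lemma~\ref{lem-A-action-on-T}, the $w_a$ are precisely the four neighbours of~$v$.

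Removing the open edge $\{v,w_a\}$ disconnects $\calT$ into the half-tree $T_a$ containing $w_a$ and its complement containing $v$, giving a partition of the vertex set as $\{v\}\sqcup T_0\sqcup T_1\sqcup T_2\sqcup T_3$. Since $\gbar_{a4}$ inverts $\{v,w_a\}$ it interchanges $T_a$ with its complement, so $\gbar_{a4}\bigl(\bigcup_{b\neq a}T_b\bigr)\sset T_a$. This is exactly the ping-pong configuration for a free product of involutions: any nonempty reduced word $\gbar_{a_14}\cdots\gbar_{a_m4}$ carries $v$ into $T_{a_1}$, hence is nontrivial and moves $v$. This proves simultaneously that $\gend{\gbar_{04}}*\cdots*\gend{\gbar_{34}}$ is free of the asserted form and that it acts freely on vertices; a short induction on distance from $v$ (using $\gbar_{a4}$ to step from $T_a$ one vertex closer to $v$) gives transitivity. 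Thus the group acts simply transitively on the vertices of $\calT$, which is part~\ref{item-four-Z-mod-2s}.

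The six $\gbar_{ab}$ with $a,b\neq4$ are conjugations by norm-$2$ quaternions whose conjugation maps preserve $\hur$, hence preserve $\hur\tensor\Z_3$ and fix $v$; so the $\frakS_4$ they generate lies in the stabiliser of $v$. It normalises $\gend{\gbar_{04},\dots,\gbar_{34}}$, since conjugation permutes the $\gbar_{a4}$ by transpositions, and it meets that subgroup trivially because simple transitivity forces the stabiliser of $v$ inside the free product to be trivial. This gives the semidirect product of part~\ref{item-description-of-G}. For injectivity of $\Aut(S)\to\Aut(C)$ I transport this upstairs: the ten $g_{ab}$ generate $\Aut(S)$ (Theorem~\ref{ThmAutGenerators}), the six with $a,b\neq4$ generate a copy of $\frakS_4$, and these conjugate the four $g_{a4}$ by transpositions. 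Since the images $\gbar_{a4}$ already generate a genuine free product $(\Z/2)^{*4}$, the map $\gend{g_{04},\dots,g_{34}}\to\gend{\gbar_{04},\dots,\gbar_{34}}$ collapses no relations and is an isomorphism, while $\frakS_4\to\frakS_4$ is an isomorphism by order count; the two factors intersect trivially upstairs because their images do. Hence $\Aut(S)=(\Z/2)^{*4}\semidirect\frakS_4$, and the restriction map, being injective on each factor and respecting the semidirect decomposition, is injective.
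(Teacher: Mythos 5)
Your proposal is correct, and its geometric core is the same as the paper's: both use Lemma~\ref{lemBinTet} and Lemma~\ref{lem-A-action-on-T} to show that each $\gbar_{a4}$, having determinant of odd $3$-adic valuation (so it swaps the two $\SL_2(\Q_3)$-orbits of vertices) and centralizing an order-$3$ subgroup whose fixed vertex set is $\{v,w_a\}$, must exchange $v$ with the neighbour $w_a$, and that the four $w_a$ are the four distinct neighbours of~$v$. Where you genuinely diverge is in how the free product and simple transitivity are then extracted: the paper takes the fundamental domain $D$ of four half-edges from $v$ to the midpoints $m_a$ and runs a Poincar\'e polyhedron argument, gluing $\Htilde\times D$ into a graph $\calTtilde$, showing $\calTtilde\to\calT$ is a covering map of trees and hence a homeomorphism, which yields $\Htilde\iso H$ and simple transitivity in one stroke; you instead run ping-pong on the four branches $T_a$ plus an induction on distance for transitivity, which is more elementary and avoids the covering-space machinery at the cost of treating freeness and transitivity separately. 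Your argument for trivial intersection in part (2) also differs: the paper notes the intersection is a finite (hence order${}\leq2$) subgroup of the free product that is normal in $\frakS_4$, which has no normal subgroup of order $2$, while your route via the triviality of the $v$-stabilizer in the free product is equally valid and arguably cleaner. You also spell out the transport to $\Aut(S)$ and the injectivity claim (via injectivity on each factor and $N'\cap\frakS_4'=1$ downstairs), which the paper leaves terse by absorbing it into the proof of Theorem~\ref{ThmAutGp}; that factor-by-factor argument is sound. One small step you should make explicit: when you assert that the six $\gbar_{ab}$ with $a,b\neq4$ fix $v$ because their conjugation maps preserve $\hur\tensor\Z_3$, you are implicitly identifying $v$ with the vertex stabilized by $\bfF(\Z)$; this follows from the uniqueness clause of Lemma~\ref{lem-A-action-on-T}, since $\Abar\sset\bfF(\Z)$ and $A$ fixes only $v$, but as written it is an unjustified identification (the same implicit step the paper itself relies on later, so this is a matter of exposition rather than a gap).
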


\begin{proof} 
  Even though $\Gbar$ was defined as a subgroup of $\Aut(C)\iso\PGL_2(\Bbbk)$, 
  we will continue to work with it as a subgroup of $\PGL_2(\bbQ_3)$.  
  We continue to write $v$ for the unique vertex of $\calT$ fixed by the binary tetrahedral group~$A$.
  Each of ${ }\pm i\pm j\pm k$ has norm~$3$ in $\hur$, 
  hence determinant~$3$ when regarded as an element of $\GL_2(\Q_3)$.
  Therefore each $\gbar_{a4}$ exchanges the two $\SL_2(\Q_3)$-orbits of vertices of~$\calT$.  
  In particular, $\gbar_{a4}$
	moves $v$  to some other vertex.  
  
  Next, $\gbar_{a4}$ centralizes
  some  order~$3$ subgroup $\Theta_a$ of $\frakS_4\sset\Gbar$.
  The image of $A$ in $\PGL_2(\bbQ_3)$ 
  contains all the order~$3$ subgroups of $\frakS_4$.  Therefore
  Lemma~\ref{lem-A-action-on-T} shows that $\Theta_a$ fixes the vertex $v$ and one of its neighbors,
  but no other vertices of~$\calT$.
  Since $\gbar_{a4}$ centralizes $\Theta_a$, it preserves this set of two vertices.
  In the previous paragraph we saw that $\gbar_{a4}$ moves $v$ to some other vertex.
  Therefore $\gbar_{a4}$ exchanges $v$ with the neighbor fixed by $\Theta_a$.
  The midpoint $m_a$ of the edge joining these vertices is the only fixed point of $\gbar_{a4}$ in
  $\calT$.  
  We think of $\gbar_{a4}$ as acting on $\calT$ by a sort of reflection, whose mirror 
  consists of the single point~$m_a$.
  Each of $\gbar_{04},\dots,\gbar_{34}$ centralizes a different order~$3$
  subgroup of $\frakS_4$, so $m_0,\dots,m_3$ are the midpoints of the four edges emanating
  from~$v$.

  This suggests that the union $D$
  of the four half-edges from $v$ to $m_0,\dots,m_3$
  should be a fundamental domain for the action of $\gend{\gbar_{04},\dots,\gbar_{34}}$
  on~$\calT$.
	This can be verified by using Poincar\'e's Polyhedron Theorem. 
  Standard references, such as  \cite[sec. IV.H]{Maskit}, only develop this theorem for groups acting
on manifolds. So we sketch the proof in our situation, which is actually 
much simpler than the general manifold case.
  
  We set $H=\gend{\gbar_{04},\dots,\gbar_{34}}$ and
  \begin{equation*}
  \Htilde=\gend{\gbar_{15}}*\cdots*\gend{\gbar_{45}}
  \iso(\Z/2)*(\Z/2)*(\Z/2)*(\Z/2)
\end{equation*}
We write elements of $\Htilde$ with
tildes (except for the $\gbar_{a4}$), and indicate
  the natural map $\Htilde\to H$ by removing the tilde.
  We think of $\Htilde\times D$ as a
   disjoint union of copies of 
   $D$ indexed by $\htilde\in\tilde{H}$.  
   There is a natural $\Htilde$-action on this union, with $\gtilde\in\Htilde$
   sending $(\htilde,d)$ to $(\gtilde\htilde,d)$. The map $\Htilde\times D\to\calT$ defined by
   $(\htilde,d)\mapsto h(d)$ is compatible with the natural map $\Htilde\to H$
   and the $\Htilde$- and $H$-actions on $\Htilde\times D$ and $\calT$.  We glue the copies of $D$ together 
   to form a connected graph $\calTtilde$, by identifying $(\htilde,m_a)$ with $(\htilde\gbar_{a4},m_a)$,
   for every $\htilde\in\Htilde$ and $a=0,\dots,3$.  
   The gluing is compatible with
   the $\Htilde$-action, so $\Htilde$ acts on $\calTtilde$. 
   The gluing is also compatible with 
   the projection $\Htilde\times D\to\calT$,
   which therefore descends to a map $\calTtilde\to\calT$.  This map is compatible with $\Htilde\to H$ 
   and the
   $\Htilde$- and $H$-actions.
   It is easy to check that $\calTtilde\to\calT$ is 
	a covering map, hence a homeomorphism.  It follows that $\tilde{H}\to H$ must be an isomorphism.
  The simple transitivity of $\Htilde$ on the vertices $(\htilde,v)$ of $\calTtilde$ is
  obvious, so $H$ acts simply transitively on the vertices of $\calT$.

\eqref{item-description-of-G} 
Having proven \eqref{item-four-Z-mod-2s}, we know that $\Gbar$ is generated by $\frakS_4$ and
the free product of four copies of
$(\Z/2)$, with the first group normalizing the second.  To establish the
semidirect product decomposition we must show that these groups meet trivially.
As a finite subgroup of the free product, the intersection has order${ }\leq2$.  But the 
intersection is also normal in $\frakS_4$, which has no normal subgroups of order~$2$.  Therefore
the intersection is trivial.
\end{proof}

\begin{thm}
  \label{ThmGpFunctor}
  The group $\Gbar$ coincides with $\bfF(\bbZ[\frac13])$, which is 
 maximal among discrete subgroups of $\PGL_2(\bbQ_3)$.
\end{thm}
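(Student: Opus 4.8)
The plan is to read off everything from the action of $\Gbar$ on the Bruhat--Tits tree $\calT$ of $\PGL_2(\bbQ_3)$, which Theorem~\ref{thm-descriptions-of-G} describes completely: the free product $H=\gend{\gbar_{04},\dots,\gbar_{34}}$ acts simply transitively on the vertices of $\calT$, while the complementary $\frakS_4$ fixes the distinguished vertex $v$. The first step is to identify the vertex stabilizer inside the ambient arithmetic group. Since $\hur\tensor\Z_3\iso M_2(\Z_3)$ is Azumaya, Skolem--Noether gives $\bfF(\Z_3)=\Aut(M_2(\Z_3))\iso\PGL_2(\Z_3)$, which is exactly the stabilizer of $v$ in $\bfF(\bbQ_3)\iso\PGL_2(\bbQ_3)$. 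Hence $\Stab_{\bfF(\bbZ[1/3])}(v)=\bfF(\bbZ[\tfrac13])\cap\bfF(\Z_3)=\bfF(\Z)$, the last equality because $\Z[\tfrac13]\cap\Z_3=\Z$ and the representable functor $\bfF$ turns this pullback of rings into the corresponding pullback of groups. It remains to compute $\bfF(\Z)=\Aut(\hur)$. This group is an arithmetic subgroup of $\bfF(\bbR)\iso\SO(3)$, hence finite; it acts faithfully on $\Im\hur=\Z i\oplus\Z j\oplus\Z k$, preserving the norm form and (since it respects commutators) the orientation, so it embeds in $\SO_3(\Z)\iso\frakS_4$. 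As the six $\gbar_{ab}$ with $a,b\neq4$ already exhaust this $\frakS_4$, we get $\bfF(\Z)=\frakS_4$.

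Granting these, the equality $\Gbar=\bfF(\bbZ[\tfrac13])$ is a one-line consequence of simple transitivity. Write $\Gamma=\bfF(\bbZ[\tfrac13])\supseteq\Gbar$. Given $\gamma\in\Gamma$, the vertex $\gamma(v)$ equals $h(v)$ for a unique $h\in H\subseteq\Gbar$, so $h^{-1}\gamma\in\Stab_\Gamma(v)=\bfF(\Z)=\frakS_4\subseteq\Gbar$; thus $\gamma\in\Gbar$ and $\Gamma=\Gbar$. The same mechanism reduces maximality to a local statement. Let $\Gamma'\supseteq\Gamma$ be any discrete subgroup of $\PGL_2(\bbQ_3)$. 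Its vertex stabilizer $\Stab_{\Gamma'}(v)$ is discrete in the compact group $\bfF(\Z_3)\iso\PGL_2(\Z_3)$, hence finite, and it contains $\Stab_\Gamma(v)=\frakS_4$. If every finite subgroup $\Phi$ with $\frakS_4\subseteq\Phi\subseteq\PGL_2(\Z_3)$ equals $\frakS_4$, then $\Stab_{\Gamma'}(v)=\frakS_4\subseteq\Gamma$, and transitivity of $H$ gives $\Gamma'=H\cdot\frakS_4=\Gamma$, proving maximality.

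The crux is therefore the local claim, and I expect the genuinely delicate point to be that $p=3$ is the residue characteristic. Let $K=\ker\bigl(\PGL_2(\Z_3)\to\PGL_2(\F_3)\bigr)$, a pro-$3$ group; I would first show $K$ is torsion-free. It has no prime-to-$3$ torsion because it is pro-$3$, so the only thing to rule out is an element of order~$3$. Lifting such an element to $g\in\GL_2(\Z_3)$ with $g\equiv I\pmod 3$ and $g^3\in\Z_3^\times\cdot I$, I would argue by a valuation estimate: writing $g=cI+3^kA$ with $k\geq1$ and $A$ non-scalar modulo~$3$, the expansion $g^3=c^3I+3^{k+1}c^2A+3^{2k+1}cA^2+3^{3k}A^3$ has its non-scalar part concentrated in the $3^{k+1}$-term, so $g^3$ scalar forces $c^2A$, hence $A$, to be scalar modulo~$3$, a contradiction. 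Thus $K$ is torsion-free. Consequently $\frakS_4\cap K=1$, so reduction embeds $\frakS_4$ into $\PGL_2(\F_3)$; both have order~$24$, so this is an isomorphism and $\frakS_4$ is a complement to $K$. For any finite $\Phi$ with $\frakS_4\subseteq\Phi\subseteq\PGL_2(\Z_3)$ we then have $\Phi\cap K=1$, a finite subgroup of the torsion-free $K$, whence $\Phi$ injects into $\PGL_2(\F_3)$; since $\frakS_4\subseteq\Phi$ already surjects, $|\Phi|=24$ and $\Phi=\frakS_4$. This establishes the local claim and completes the proof.
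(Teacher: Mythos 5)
Your proof is correct, and it takes a genuinely different route from the paper's. The paper proves maximality first: given a discrete $\Gamma\supseteq\Gbar$, vertex-transitivity gives $\Gamma=\Gbar\cdot\Stab_\Gamma(v)$, the stabilizer is finite by discreteness, and the paper then simply quotes the classical fact that $\frakS_4$ is maximal among finite subgroups of $\PGL_2$ over any field of characteristic~$0$ (from the classification cyclic/dihedral/$\frakA_4$/$\frakS_4$/$\frakA_5$); the equality $\Gbar=\bfF(\bbZ[\frac13])$ then falls out of maximality together with the inclusion $\Gbar\sset\bfF(\bbZ[\frac13])$ and the discreteness of $\bfF(\bbZ[\frac13])$ from Lemma~\ref{LemDiscreteness}. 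You invert this logic: you prove the equality directly, via $\Stab_{\bfF(\bbZ[1/3])}(v)=\bfF(\bbZ[\frac13])\cap\bfF(\bbZ_3)=\bfF(\bbZ)$ (using representability of $\bfF$ and the fiber product $\bbZ=\bbZ[\frac13]\times_{\bbQ_3}\bbZ_3$) plus the computation $\bfF(\bbZ)=\Aut(\hur)\iso\frakS_4$ --- facts the paper never needs --- and you then reduce maximality to a local claim, that $\frakS_4$ is maximal among finite subgroups of $\PGL_2(\bbZ_3)$, proved by showing the congruence kernel $K=\Ker\bigl(\PGL_2(\bbZ_3)\to\PGL_2(\bbF_3)\bigr)$ is torsion-free and exploiting the order coincidence $|\PGL_2(\bbF_3)|=24=|\frakS_4|$. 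Your valuation estimate is sound: for $k\geq1$ both $2k+1$ and $3k$ are at least $k+2$, so $g^3\equiv c^3I+3^{k+1}c^2A \pmod{3^{k+2}}$ and scalarity of $g^3$ forces $A$ scalar mod~$3$. The trade-off: the paper's argument is shorter and its key fact holds over every characteristic-$0$ field, while yours avoids invoking the classification of finite subgroups of $\PGL_2$ at the cost of being special to residue characteristic~$3$ (where $\PGL_2(\bbF_3)\iso\frakS_4$), and it yields the extra arithmetic information $\bfF(\bbZ)\iso\frakS_4$ and a direct structural description $\bfF(\bbZ[\frac13])=H\cdot\frakS_4$. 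Two small points you should make explicit: the isomorphism $\hur\tensor\bbZ_3\iso\Mat_2(\bbZ_3)$ uses that $\hur$ is a \emph{maximal} order in $\bbH$ (standard, by discriminant), and the identification of the vertex $v$ of Lemma~\ref{lem-A-action-on-T} with the lattice class $[\bbZ_3^2]$ stabilized by $\bfF(\bbZ_3)$ needs a word: $\Abar\sset\bfF(\bbZ)\sset\bfF(\bbZ_3)$ fixes $[\bbZ_3^2]$, and $v$ is the unique $\Abar$-fixed vertex.
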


\begin{proof}
  It suffices to show that $\Gbar$ is maximal among discrete subgroups of $\PGL_2(\bbQ_3)$. So
   suppose $\Gamma$ is a discrete subgroup 
  that contains it.  Because $\Gbar$ acts transitively on the vertices of $\calT$,
  $\Gamma$ is generated by $\Gbar$ and the $\Gamma$-stabilizer of~$v$.
  The latter is finite, by discreteness.  It contains the $\Gbar$-stabilizer $\frakS_4$ of~$v$.
  Since $\frakS_4$ is maximal among finite subgroups of $\PGL_2$, over any field of characteristic~$0$,
  the  $\Gamma$-stabilizer is the same as the $\Gbar$-stabilizer.  So $\Gamma=\Gbar$.
\end{proof}

%\begin{remark}\label{rmk3.7}
%  Using the action on $\calT$,
%	one can also show that $(\hur\tensor\Z[\frac13])^*_1$ is generated by $A$ and the pairwise quotients
%	of the
%  ${ }\pm i \pm j\pm k$.  
%  In fact these pairwise quotients generate a normal subgroup which is free of rank~$3$,
%	so $(\hur\tensor\Z[\frac{1}{3}])^*_1$ is the semidirect product of this free group by the binary tetrahedral
%	group.  Furthermore, its image in $\PGL_2(\Q_3)$ has index~$4$ in its $\PGL_2(\Q_3)$-normalizer,
%	which turns out to be the group $\Gbar$.
%  This realizes $\Gbar$ as a specific arithmetic lattice in $\PGL_2(\bbQ_3)$.
%\end{remark}

Theorem~\ref{thm-descriptions-of-G} has an appealing consequence:

\begin{thm}
  \label{thm3.8}
	\label{thm-tetrahedron-reflections}
  Let $T$ be a regular tetrahedron in Euclidean $3$-space $\R^3$.  Then the group of
  isometries generated by the automorphisms of $T$ and the reflections across its facets
  is 
  \begin{equation*}
    \bigl( (\Z/2)*(\Z/2)*(\Z/2)*(\Z/2) \bigr)\semidirect\Aut(T)
  \end{equation*}
\end{thm}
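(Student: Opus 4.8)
The plan is to derive this purely group-theoretically from Theorem~\ref{thm-descriptions-of-G} and Corollary~\ref{tetrahedron}, using a single auxiliary homomorphism rather than attacking the freeness of the facet reflections head-on. Write $E\sset\Isom(\R^3)$ for the group in the statement and $\Aut(T)\iso\frakS_4$ for the full symmetry group of $T$. Since each facet reflection is an involution, since $\Aut(T)$ permutes the four facet reflections exactly as it permutes the facets, and since these elements generate $E$, there is a surjective homomorphism
\[
\phi\colon\bigl((\Z/2)*(\Z/2)*(\Z/2)*(\Z/2)\bigr)\semidirect\frakS_4\longrightarrow E
\]
carrying the four free factors to the facet reflections and $\frakS_4$ to $\Aut(T)$. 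The whole problem reduces to showing that $\phi$ is injective.

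First I would introduce the ``special-orthogonalization'' homomorphism. Taking linear parts gives $\ell\colon\Isom(\R^3)\to\orth(3)$, and because $-I$ is central with $\orth(3)=\SO(3)\times\{\pm I\}$ in odd dimension, the map $A\mapsto\det(A)\,A$ is a homomorphism $\orth(3)\to\SO(3)$. Let $\psi\colon E\to\SO(3)$ be $g\mapsto\det(\ell(g))\,\ell(g)$. I will evaluate $\psi$ on generators after placing $T$ with vertices at four alternate vertices of a cube centered at the origin. Then the outward normals to the four facets of $T$ are exactly the four body diagonals of the cube, so the linear part of the facet reflection $s_a$ is the reflection $R_a$ across the plane through the origin perpendicular to the $a$-th body diagonal. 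As $\det R_a=-1$ and $-R_a$ is the half-turn about that diagonal, $\psi(s_a)=-R_a$ is precisely the order-$2$ rotation $r_{a4}$ of Corollary~\ref{tetrahedron}. On the other hand $\Aut(T)$ fixes the origin and does not contain $-I$, so $\psi$ is injective on $\Aut(T)$ and carries it onto the rotation group of the cube, i.e.\ the copy of $\frakS_4$ generated by the edge-midpoint half-turns $r_{ab}$ with $a,b\le 3$.

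Consequently $\psi$ maps $E$ onto the subgroup of $\SO(3)$ generated by the cube rotations and the body-diagonal half-turns, which is exactly the group $\Gbar$ of Corollary~\ref{tetrahedron}. Now I invoke Theorem~\ref{thm-descriptions-of-G}, which identifies $\Gbar$ with $\bigl((\Z/2)*(\Z/2)*(\Z/2)*(\Z/2)\bigr)\semidirect\frakS_4$ in such a way that $\gbar_{a4}=r_{a4}$ is the $a$-th free generator and the cube rotations are the $\frakS_4$-factor. Tracking generators through $\psi\circ\phi$ then shows that it sends each free generator to the matching $\gbar_{a4}$ and restricts to the identity on $\frakS_4$, so $\psi\circ\phi$ is the identity of the abstract group. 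This forces $\phi$ to be injective, hence an isomorphism, and the free-product structure of the facet reflections falls out as a byproduct.

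The one genuinely delicate point is the last bookkeeping step: verifying that $\psi\circ\phi$ is \emph{literally} the identity rather than merely an automorphism. This requires that the $\frakS_4=\Aut(T)$-action on the facet reflections match, under $\psi$, the $\frakS_4$-action on the $\gbar_{a4}$ supplied by Theorem~\ref{thm-descriptions-of-G}. That is exactly the compatibility of the natural $\frakS_4$-action on the four facets of $T$ with the natural $\frakS_4$-action on the four body diagonals of the circumscribed cube, via the incidence ``the $a$-th vertex lies on the $a$-th body diagonal''. Everything else---that $\psi$ is a homomorphism, the computation $\psi(s_a)=-R_a$, and $\psi(\Aut(T))$ being the cube rotation group---is routine once coordinates are fixed.
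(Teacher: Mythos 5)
Your proof is correct and takes essentially the same route as the paper's: both reduce the affine group to its image under linear parts in $\orth(3)$ and identify that image, after negating the generators, with the rotation group $\Gbar$ of Corollary~\ref{tetrahedron}, whose structure Theorem~\ref{thm-descriptions-of-G} supplies. Your packaging of the sign trick as the homomorphism $g\mapsto\det(\ell(g))\,\ell(g)$, together with the explicit verification that $\psi\circ\phi$ is the identity, makes rigorous the injectivity step that the paper compresses into ``it suffices to show that the image of this group in $\orth(3)$ has this structure.''
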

%\marginpar{\ID{I changed the proof using the new Corollary 5.2. Please check}}
%However, the reflection group
%in that sitation is solvable, since the isometry group of Euclidean $2$-space is.
%Therefore your friend would have many solutions 
%available besides retracing your sequence of reflections.

\begin{proof} 
  It suffices to show that the image of this group in $\orth(3)$ has this structure.
  Combining  Corollary \ref{tetrahedron} and Theorem~\ref{thm-descriptions-of-G} 
  shows that the group $\Gbar\sset\SO(3)$
  generated
  by the rotations $r_{ab}$  $(a,b=0,\dots,3)$ around the midpoints of the edges of the cube,
  and the rotations $r_{a4}$ around the body diagonals of the cube, has this structure.
  Replacing each $r_{ab}$ by $-r_{ab}$ replaces each $g\in\Gbar$ by $\pm g$, and therefore
  does not change the isomorphism type of the subgroup of $\orth(3)$ they
  generate. 
  
  We identify $T$ with one of the two regular tetrahedra inscribed in the cube. 
  Then the planes through the origin, perpendicular to the body diagonals, are parallel to the 
  facets of $T$.  Therefore the $-r_{a4}$ are the reflections across them. And the
  $-r_{ab}$ with $a,b\le 3$ generate the automorphism group of $T$.
\end{proof}

An amusing way to interpret this is that you can reflect $T$ across a facet, and then reflect that image
of $T$ across one of its facets, and so on.  Imagine doing this and then challenging your
friend to return the tetrahedron to its original position by further reflections. The
only solution is to retrace your sequence of reflections.  This is a $3$-dimensional
version of Rich Schwartz's game ``Lucy and Lily'' \cite{Schwartz}, which uses 
a regular pentagon in the plane in place of our tetrahedron.  

In fact, the entire paper grew backwards from theorem~\ref{thm-tetrahedron-reflections}.  
The explicit matrix computations
referred to in remark~\ref{RemMatrices} identified $\Gbar$ with the group from this theorem.  The problem
of identifying the image led to the discrete subgroup of $\PGL_2(\bbQ_3)$.  
Then, having identified $\Gbar$, it
was natural to wonder whether $\Aut(S)\to\Gbar$ was faithful.  And this led to the computation of the
nef cone.
 
%\ID{With great sorrow I decided to elimitate the last section. I will keep my good memory about these computations that gave rise to our paper but they really became obsolete.}

\end{document}